\renewcommand{\leq}{\leqslant} 
\renewcommand{\geq}{\geqslant}
\renewcommand{\epsilon}{\varepsilon} 
\renewcommand{\hat}{\widehat}
\renewcommand{\bar}{\overline}
\renewcommand{\tilde}{\widetilde}
\def\1{\mbox{1\hspace{-.35em}1}}
\def\R{\mathbb{R}}
\def\N{\mathbb{N}}
\def\P{\mathbb{P}}
\def\E{\mathbb{E}}
\def\L{\mathbb{L}}
\def\C{\mathbb{C}}
\def\Z{\mathbb{Z}}
\def\H{\mathbb{H}}
\newcommand{\bff}{\mbox{${\boldsymbol{f}}$}}
\newcommand{\bd}{\mbox{${\mathbf d}$}}
\newcommand{\bC}{\mbox{${\mathbf C}$}}
\newcommand{\bE}{\mbox{${\mathbf E}$}}
\newcommand{\be}{\mbox{${\mathbf e}$}}
\newcommand{\bg}{\mbox{${\mathbf g}$}}
\newcommand{\bi}{\mbox{${\mathbf i}$}}
\newcommand{\bI}{\mbox{${\mathbf I}$}}
\newcommand{\bu}{\mbox{${\mathbf u}$}}
\newcommand{\bA}{\mbox{${\mathbf A}$}}
\newcommand{\bB}{\mbox{${\mathbf B}$}}
\newcommand{\bD}{\mbox{${\mathbf D}$}}
\newcommand{\bQ}{\mbox{${\mathbf Q}$}}
\newcommand{\bL}{\mbox{${\mathbf L}$}}
\newcommand{\bM}{\mbox{${\mathbf M}$}}
\newcommand{\bS}{\mbox{${\mathbf S}$}}
\newcommand{\bG}{\mbox{${\mathbf G}$}}
\newcommand{\bX}{\mbox{${\mathbf X}$}}
\newcommand{\bZ}{\mbox{${\mathbf Z}$}}
\newcommand{\bW}{\mbox{${\mathbf W}$}}
\newcommand{\bm}{\mbox{\boldmath$\mu$}}
\newcommand{\bOmega}{\mbox{\boldmath$\Omega$}}
\newcommand{\bLambda}{\mbox{\boldmath$\Lambda$}}
\newcommand{\bdelta}{\mbox{\boldmath$\delta$}}
\newcommand{\bSigma}{\mbox{\boldmath$\Sigma$}}
\newcommand{\bepsilon}{\mbox{\boldmath$\epsilon$}}
\newcommand{\barG}{\mbox{\boldmath$\overline{G}$}}
\newcommand{\barR}{\mbox{$\overline{R}$}}
\newcommand{\mj}{\mbox{$<\mathcal J>$}}
\newcommand{\diag}{\mbox{diag}}
\newcommand{\argmin}{\displaystyle \mathop{argmin}}
\newcommand{\BigO}{O}
\theoremstyle{plain}
\newtheorem{thm}{Theorem}
\newtheorem{prop}[thm]{Proposition}
\newtheorem{cor}[thm]{Corollary}
\theoremstyle{definition}
\theoremstyle{remark}
\newtheorem*{rem}{Remark}
\providecommand{\keywords}[1]{\noindent \textbf{{Keywords:}} #1}
    \let\@fnsymbol\@arabic
\let\oldFootnote\thanks
\newcommand\nextToken\relax
\renewcommand\thanks[1]{%
    \oldFootnote{#1}\futurelet\nextToken\isFootnote}
\newcommand\isFootnote{%
    \ifx\thanks\nextToken\textsuperscript{~\,,\,}\fi}
\begin{document}

\title{Multivariate wavelet Whittle estimation in long-range dependence.}
\author{S. Achard\thanks{Univ. Grenoble Alpes, GIPSA-Lab, F-38000 Grenoble, France}\thanks{CNRS, GIPSA-Lab, F-38000 Grenoble, France}\,~~and I. Gannaz\thanks{Universit\'e de Lyon,
CNRS UMR 5208, INSA de Lyon, Institut Camille Jordan, France}}

\date{October 2015}

\maketitle

\setlength{\parindent}{0pt}

\begin{abstract}{Multivariate processes with long-range dependent properties are found in a large number of applications including finance, geophysics and neuroscience. For real data applications, the correlation between time series is crucial. Usual estimations of correlation can be highly biased due to phase-shifts caused by the differences in the properties of autocorrelation in the processes. To address this issue, we introduce a semiparametric estimation of multivariate long-range dependent processes. The parameters of interest in the model are the vector of the long-range dependence parameters and the long-run covariance matrix, also called functional connectivity in neuroscience. This matrix characterizes coupling between time series. The proposed multivariate wavelet-based Whittle estimation is shown to be consistent for the estimation of both the long-range dependence and the covariance matrix and to encompass both stationary and nonstationary processes. A simulation study and a real data example are presented to illustrate the finite sample behaviour. }
\end{abstract}

\keywords{multivariate processes, long memory, fractional integration, semi\-para\-metric estimation, covariance matrix, wavelets, neuroscience application, functional connectivity}

 \vspace{10pt}

{\bf MSC classification:} 60G22, 62M10, 62M15, 62H20, 92C55

\setlength{\parskip}{\baselineskip}

\section{Introduction}

The long-range dependence has attracted lots of interest in statistics and in many applications since the seminal paper of Mandelbrot in 1950. First the fractional Brownian motion model was introduced as the unique Gaussian process having stationary increments and self-similarity index $H$ in $(0,1)$ \citep{mandelbrot}. This model is characterized by one parameter called the Hurst exponent. Since then, several extensions were introduced in order to get more complex modellings that better match real data, such as ARIMA, FD, FIN\ldots~We refer to \cite{PercivalWalden} and references therein for an overview of long-range dependence models. These models were used in a large scope of applications, for example finance \citep{Gencay} (see also the references in \cite{Nielsen05survey}), internet traffic analysis \citep{AbryVeitch98}, physical sciences \citep{PercivalWalden, Papanicolaou}, geosciences \citep{WhitcherJensen00} and neuroimagery \citep{maxim.2005.1}.

Nowadays, it is common to record data having multiple sensors, such as neuroimagery (functional Magnetic Resonance Imaging or Electroencephalography). Each sensor records the activity of a specific part of the brain. However, the brain is a complex system with complex interactions between its different parts, so researchers were interested in modelling the sensors as multivariate time series. A similar representation is suited for data acquired in geosciences where, for example, time series correspond to temperatures in several part of the earth, like in \cite{WhitcherJensen00}. For these two applications, it has been shown that the univariate time series present long-range dependence behaviour. Several models accounting for long memory features have been proposed. In \cite{pipiras}, the multivariate Brownian motion was defined. The values of interactions as defined by the covariance matrix must be carefully chosen so that the model is identifiable \citep{mFBM}. Also, the multivariate extension of fractionally difference models was proposed in \cite{Chambers}, which includes the multivariate extension of ARFIMA models with an explicit expression of the short memory terms. In a recent paper, \cite{KechagiasPipiras} highlighted the difficulties to extend the notion of long-range dependence to multivariate time series and proposed specific linear representations of long-range dependence. Concerning multivariate ARFIMA models, \cite{Lobato97, SelaHurvich2008} studied two different classes of extension depending on the order of fractional integration and ARMA models.

Using these long memory models, a typical statistical issue is to estimate the long memory parameter. This characterizes the long-term dependence of the series, which controls many relevant statistical properties. A very large literature exists in the context of univariate time series. First, parametric approaches were considered \citep{FoxTaqqu86,Dahlhaus,Giraitis97rate} which provide fast rates of convergence. However these approaches suffer from inconsistency when the short-term component of the model is misspecified. Semiparametric models were then developed to be robust to model misspecification \citep{Robinson94a, Robinson94b, Robinson95a, Robinson95b}, where the spectral density is modelled only near zero frequency. In the frequency domain, two popular estimators among the semiparametric ones are the Geweke-Porter-Hudak introduced by \cite{Geweke} and the local Whittle estimator of \cite{Robinson95b}. Wavelet-based estimators were also studied, and proved to be adequate for studying fractal time series. In \cite{AbryVeitch98}, the authors developed an estimator using log-regression of the wavelet coefficient variance on the scale index. \cite{Moulines08Whittle} derived the asymptotic properties of a wavelet Whittle estimator.

Considering multivariate fractionally integrated processes \citep{Chambers}, the estimation of memory parameters and covariance matrix have been first studied in \cite{Robinson95a}. Then \cite{Lobato99} proposed a semiparametric two-step estimator. \cite{Shimotsu07} extended this latter approach including phase-shift consideration. \cite{Nielsen11} proposed an extension based on \cite{Abadir}'s extended Fourier transform to estimate long memory parameters for nonstationary time series. In a different approach, \cite{SelaHurvich2012} defined an estimator based on the average periodogram for a power law in coherency.  All these approaches were developed using Fourier log-periodogram. In comparison, there are few wavelet-based estimators of long-range memory parameters in multivariate settings. \cite{FriasRuismedina,WangWang} proposed estimation schemes based on multidimensional wavelet basis. In many real data applications such as geosciences, internet traffic or neurosciences, the number of time series is huge, as the real data example of Section~\ref{sec:real} illustrates. The latter works thus do not seem well adapted. \cite{Achard08} studied a two-dimensional estimation, based on univariate wavelet basis, which defines estimators using a regression of the cross-covariance between the wavelet coefficients. This approach also appears difficult to generalize to high multidimensional settings. The present work defines a wavelet Whittle estimator for multivariate models. The extension to multivariate processes presents two issues. First, a vector of long memory parameters has to be estimated along with the covariance matrix that is modelling the interactions between the time series. Second, as noted in \cite{Robinson94a,Shimotsu07}, the multivariate extension of the fractional integrated model introduces a phase-shift that has to be taken into account in the estimation procedures. The new wavelet-based proposed methodology is shown to be adequate for nonstationary long-range dependence models.

The paper is organised as follows. Section~\ref{sec:model} introduces the specific framework of multivariate long memory processes based on the definition of the spectral density matrix. The multivariate wavelet Whittle estimators of both the long memory parameters and the covariance matrix are defined in Section~\ref{sec:ww}. The properties of this new estimation scheme are derived in Section~\ref{sec:prop} where consistency of both estimations are established. Finally, Section~\ref{sec:simu} presents a simulation study which illustrates that the wavelet Whittle estimators have comparable performances to the Fourier-based ones. In addition, our method provides a very flexible approach to handle both stationary and nonstationary processes. Section~\ref{sec:real} deals with a real data application in neuroscience.

\section{The semiparametric multivariate long-memory framework} 
\label{sec:model}
Let $\bX=\{X_{\ell}(k),k\in\Z, \ell=1,\dots,p\}$ be a multivariate stochastic process. Each process $X_\ell$ is not necessarily stationary. Denote by $\Delta X_\ell$ the first order difference, $(\Delta X_\ell)(k)=X_\ell(k)-X_\ell(k-1)$, and by  $\Delta^{D}X_\ell$ the $D$-th order difference. For every component $X_\ell$, there exists $D_\ell\in\N$ such that the $D_\ell$-th order difference $\Delta^{D_\ell}X_\ell$ is covariance stationary. Following \cite{Chambers,Moulines07SpectralDensity,Achard08}, we consider a long memory process $\bX$ with memory parameters $\bd=(d_1,d_2,\ldots,d_p)$. For any $\bD>\bd-1/2$, we suppose that the multivariate process $\bZ=\diag(\Delta^{D_{\ell}},\ell=1,\dots,p)\bX$ is covariance stationary with a spectral density matrix given by 
\[
\text{for all } (\ell,m)~,~f_{\ell,m}^{(D_\ell,D_m)}(\lambda)=\frac{1}{2\pi}\Omega_{\ell,m}(1-e^{-i\lambda})^{-d_\ell^s}(1-e^{i\lambda})^{-d_m^s}f_{\ell,m}^S(\lambda),\qquad \lambda\in[-\pi,\pi],
\]
where the long memory parameters are given by $d_m^S=d_m-D_m$ for all $m$. The functions $f_{\ell,m}^S(\cdot)$ correspond to the short memory behaviour of the process. 
The generalized cross-spectral density of processes $X_\ell$ and $X_m$ can be written as
\[
f_{\ell,m}(\lambda)=\frac{1}{2\pi}\Omega_{\ell,m}(1-e^{-i\lambda})^{-d_\ell}(1-e^{i\lambda})^{-d_m}f_{\ell,m}^S(\lambda),\qquad \lambda\in[-\pi,\pi].
\]
As it will be explained in Section~\ref{sec:examples}, this model corresponds to a subclass of multivariate long-range dependent time series. In a general case, an additional multiplicative term of the form $e^{i\varphi}$ is required, see {\it e.g.}~\cite{KechagiasPipiras,SelaHurvich2012}.

In the case of a multivariate setting, the spectral density of the multivariate process $\bX$ is thus, 
\begin{equation}\label{eqn:density}\bff(\lambda)=\bOmega\circ( \bLambda^0(\bd) \bff^S(\lambda) \bLambda^0(\bd)^\ast ) ,\quad \lambda\in[-\pi,\pi], \quad \text{~~with~~} \bLambda^0(\bd)=\diag((1-e^{-i\lambda})^{-\bd})\end{equation} 
where $\bd=\bD+\bd^s$. The exponent $\ast$ is the conjugate operator and $\circ$ denotes the Hadamard product. The matrix $\bOmega$ is supposed real symmetric positive definite.

In this semiparametric framework, the spectral density $\bff^S(\cdot)$ corresponds to the short-memory behaviour and the matrix $\bOmega$ is called \textit{fractal connectivity} by \cite{Achard08} or \textit{long-run covariance} matrix by \cite{RobinsonDiscussion}. Similarly to \cite{Moulines07SpectralDensity}, we assume that $\bff^S(\cdot)\in\mathcal H(\beta,L)$ with $0<\beta\leq 2$ and $0<L$. The space $\mathcal H(\beta,L)$ is defined as the class of non-negative symmetric functions $\bg(\cdot)$ on $[\pi,\pi]$ such that $g_{\ell,m}(0)=1$ for all $(\ell,m)\in\{1,\dots,p\}^2$ and that for all $\lambda\in(-\pi,\pi)$, $\max\{|\bg(\lambda)-1|,\, (\ell,m)\in\{1,\dots,p\}^2\}_\infty\,\leq \, L |\lambda|^\beta$. The assumption $f_{\ell,m}^S(0)=1$ for all $(\ell,m)$ is necessary for $\bOmega$ to be identifiable in model~\eqref{eqn:density}.

The spectral density specifies that the two processes $X_\ell$ and $X_m$ have long-memory parameters respectively $d_\ell$ and $d_m$. Parameters with absolute value greater that $1/2$ are allowed, covering nonstationary time series (in this case $D_\ell, D_m \geq 1$). If orders are different, the estimation of the memory parameters is still available but some bias issues occur for the estimation of the underlying covariance $\bOmega$, which is detailled in Section~\ref{sec:ww}.

In order to derive semiparametric estimations of the memory parameters and the matrix $\bOmega$, the term inside the matrix $\bLambda^0(\bd)$ can be simplified using the equality $1-e^{-i\lambda}=2\sin(\lambda/2)e^{i(\pi-\lambda)/2}$. Consequently, when $\lambda$ tends to 0, the spectral density matrix is approximated at first order by
\begin{equation}\label{eqn:approx}\bff(\lambda)\sim  \tilde\bLambda(\bd) \bOmega \tilde\bLambda(\bd)^\ast,\quad \text{~~when~~}\lambda\to 0, \quad \text{~~with~~} \tilde \bLambda(\bd)=\diag(|\lambda|^{-\bd}e^{-i \,\text{sign}(\lambda)\pi \bd/2}).
\end{equation}
Here and subsequently $\sim$ means that the ratio left- and right-hand side converges to one.
 
A similar approximation has been carried out in \cite{Lobato97} or \cite{PhillipsShimotsu04}, while \cite{Shimotsu07} derived a second order approximation. \cite{Lobato99} used $\tilde \bLambda(\bd)=\diag(|\lambda|^{-\bd})$ as an approximation of $\bff(\cdot)$. Whereas \cite{Shimotsu07} chose to approximate $\bff(\cdot)$ using $\tilde \bLambda(\bd)=\diag(\lambda^{-\bd}e^{-i(\pi-\lambda) \bd/2})$, which corresponds to a second order approximation due to the remaining term $\lambda$ in the exponential. As mentioned by \cite{Shimotsu07}, intriguingly, the two defined estimators of long memory parameters are consistent, but only for the estimation of $\bd$. The estimation of the covariance matrix is affected by the choice of  $\tilde \bLambda(\bd)$. In Section~\ref{sec:ww}, we introduce our estimators using approximation~\eqref{eqn:approx}, corresponding to a trade-off between \cite{Lobato99} and \cite{Shimotsu07}. The resulting estimator for $\bd$ is equivalent to the one defined in \cite{Lobato99}. However a specific correction for the estimation of the covariance matrix overcomes the bias caused by the presence of a phase-shift through the complex exponential term. This point has also been raised in the context of detecting cointegration, when the cross-spectral density presents an additional phase parameter comparing to the case studied in this paper.

\subsection{Examples of processes}
\label{sec:examples}

This section provides some examples of processes which satisfy our semiparametric modelling.    

The matrix $\bOmega$ has been defined {\it via} the spectral representation of the process, the link between $\bOmega$ and the covariance of the multivariate process in the temporal space is detailed hereafter.
Let $\bar{\bX}=\frac 1 N \sum_{t=1}^N \bX(t)$ be the empirical mean of the process. If the cross-spectral density is defined and continuous at the frequency $\lambda=0$, Fejer's theorem states that $n^{1/2}\bar{\bX}$ converges in distribution to a zero-mean Gaussian distribution with a covariance matrix equals to $2\pi \bff(0)$. When the cross-spectral density satisfies an approximation~\eqref{eqn:density}, \cite{RobinsonDiscussion} indicates that 
\[
\bD_n \E(\bar{\bX}\bar{\bX}^T)\bD_n \xrightarrow[n\to \infty]{} 2\pi \bOmega\circ \bQ(\bd)
\]
where $\bD_n=\diag(n^{1/2-\bd})$ and $Q_{\ell,m}(\bd)=\frac{\sin(\pi d_\ell)+\sin(\pi d_m)}{\Gamma(d_\ell+d_m+2)\sin(\pi(d_\ell+d_m))}$.
The exponent $T$ denotes the transpose operator.

\subsubsection{Causal linear representations of Kechagias and Pipiras (2014)} 
\label{sec:causal_rep}

\cite{KechagiasPipiras} define Long-Range Dependence (LRD) with a more general setting for the phase. Let $\bX$ be a $p$-multivariate time series. We suppose that $\bX$ is second-order stationary and that it admits a spectral density.
The time series $\bX$ is long-range dependent in the sense of \cite{KechagiasPipiras} if its spectral density $\bff(\cdot)$ satisfies:
\[
\bff(\lambda)=\frac{1}{2\pi}\diag(|\lambda|^{-\bd})\,\bG(\lambda)\,\diag(|\lambda|^{-\bd})
\] 
with $\bd=\begin{pmatrix}d_1 & \dots & d_p \end{pmatrix}\in(0,1/2)^p$
and $\bG(\cdot)$ a $\C^{p\times p}$-valued Hermitian non negative definite matrix function satisfying 
\begin{equation*}
\bG(\lambda)\sim_{\lambda\to 0^+} \bG=(\Omega_{\ell,m}e^{i\varphi_{\ell,m}})_{\ell,m=1,\dots,p}
\end{equation*} 
with $\Omega_{\ell,m}\in\R$, $\ell,m=1,\dots,p$ and $\varphi_{\ell,m}\in(-\pi,\pi]$. The phases $\varphi_{\ell,m}$ measure the dissymmetry of the process $\bX$ at large lags. Indeed the specificity of multivariate time series is that the autocovariance function may no longer be symmetric compared with the univariate framework ({\it i.e.} $\gamma(-h)=\gamma(h)^T$ may not be equal to $\gamma(h)$). When the process is time reversible, the  autocovariance function is symmetric. Time reversible series will satisfy $\varphi_{\ell,m}=0$ for all $\ell,m=1,\dots,p$. We refer to proposition 2.1 of \cite{KechagiasPipiras} that gives some highlights on the phase parameters. In Proposition 3.1 \cite{KechagiasPipiras} give examples of LRD linear time series where any combinations of $(\bd,\bG)$ can be chosen.

Many results of the present work can be generalized to LRD processes of \cite{KechagiasPipiras}. Yet as our goal is to recover the matrix $\bOmega$, an assumption on the form of the phase $\varphi$ is necessary (since we consider a real filter which deletes the imaginary part). The most widely used definition of phase is $\varphi_{\ell,m}=-\frac{\pi}{2}(d_\ell-d_m)$ which includes a large scope of models, see section \ref{sec:ARFIMA}.

Such a definition of phase is verified, for example, using a causal representation of processes described in \cite{KechagiasPipiras}. Let $\{\bepsilon_k\}_{k\in\Z}$ be a $\R^p$-valued white noise, satisfying $\E[\bepsilon_k] = 0$ and $\E[\bepsilon_k\bepsilon_k^T]=\bI_p$. Let also $\{\bB_k = (B_{\ell,m,k})_{\ell,m=1,\dots,p}\}_{k\in\N}$ be a sequence of real-valued matrices such that $B_{\ell,m,k} = L_{\ell,m}(k)\,k^{d_a-1}, ~ k\in\N,$ where $d_\ell\in(0, 1/2)$ and $\bL(k)$, $k=1,...,p$, is an $\R^{p\times p}$-valued function satisfying $\bL(k) \sim_{k\to +\infty} \bA$
for some $\R^{p\times p}$-valued matrix $\bA$. We define the time series $\bX$ given by the causal linear representation 
\[
\bX(k) = \sum_{j=0}^{+\infty} \bB_{j}\bepsilon_{k-j}.
\]
 Corollary 4.1 of \cite{KechagiasPipiras} states that the process $\bX$ is LRD with 
\begin{align*}
\Omega_{\ell,m}&=\frac{\Gamma(d_\ell)\Gamma(d_m)}{2\pi}(\bA \bA^\ast )_{\ell,m},\\
\varphi_{\ell,m} &= -\frac{\pi}{2}(d_\ell-d_m).
\end{align*}
Such causal linear representations thus satisfies~\eqref{eqn:approx}. An example of such a representation is the multivariate $ARFIMA(0,\bd,0)$ model presented in next subsection.

\subsubsection{Multivariate ARFIMA of Lobato (1997)}
\label{sec:ARFIMA}

The composition of linear filters does not commute in the multivariate case. Consequently there are multiple extensions of univariate ARFIMA to the multivariate framework. We detail in this section the multivariate ARFIMA models of \cite{Lobato97}.

Let $\bu$ be a $p$-dimensional white noise with $\E[\bu(t)\mid \mathcal F_{t-1}]=0$ and $\E[\bu(t)\bu(t)^T\mid \mathcal F_{t-1}]=\bSigma$, where $\mathcal F_{t-1}$ is the $\sigma$-field generated by $\{\bu(s),\,s<t\}$, and $\bSigma$ is a positive definite matrix. The spectral density of $\bu$ satisfies $\bff_u(\lambda)=\bSigma/(2\pi)$.

Let $(\bA_k)_{k\in\N}$ be a sequence of $\R^{p\times p}$-valued matrices with $\bA_0$ the identity matrix and $\sum_{k=0}^\infty \|\bA_k\|^2<\infty$. Let $\bA(\cdot)$ be the discrete Fourier transform of the sequence, $\bA(\lambda)=\sum_{k=0}^{\infty} \bA_k e^{i k\lambda}$. We assume that all the roots of $|\bA(\L)|$ are outside the closed unit circle.

\cite{Lobato97} defines two multivariate ARFIMA models which both satisfy approximation~\eqref{eqn:approx}.
\begin{description}
\item[Model A.] Let $\bX$ be defined by $\bA(\L)\,\diag(\1-\L)^{\bd}\,\bX(t)=\bB(\L)\bu(t)$. The spectral density satisfies 
\[
f_{\ell,m}(\lambda)\sim_{\lambda\to 0^+} \frac{1}{2\pi}\Omega_{\ell,m}e^{-i\pi/2(d_\ell-d_m)}\lambda^{-(d_\ell+d_m)}
\] 
with $\bOmega=\bA(1)^{-1}\bB(1)\bSigma\bB(1)^T{\bA(1)^T}^{-1}.$ It is straightforward that Model A statifies approximation~\eqref{eqn:approx}.
\item[Model B.] Let $\bX$ be defined by $\diag(\1-\L)^{\bd}\,\bA(\L)\,\bX(t)=\bB(\L)\bu(t)$. The spectral density satisfies 
\[
f_{\ell,m}(\lambda)\sim_{\lambda\to 0^+} \frac{1}{2\pi}\sum_{a,b} \beta_{a,b} \alpha_{\ell,a}\alpha_{m,b}e^{-i\pi/2(d_a-d_b)}\lambda^{-(d_a+d_b)}
\]
with $\alpha_{\ell,m}=(\bA(1)^{-1})_{\ell,m}$ and $\beta_{\ell,m}=(\bB(1)\bSigma\bB(1)^T)_{\ell,m}$. In Model B the spectral density is equivalent around the zero frequency to the term in $(a,b)=argmax \{|d_a+d_b|, \; \beta_{a,b} \alpha_{\ell,a}\alpha_{m,b}\neq 0\}$. It gives a more general setting $f_{\ell,m}(\lambda)\sim_{\lambda\to 0^+} G_{\ell,m}\lambda^{-d_{\ell,m}}$ with $|d_{\ell,m}|\leq (d_\ell+d_m)/2$. (This model is studied more extensively in \cite{SelaHurvich2012}, where the authors propose an estimator for $d_{1,2}$ in a bivariate framework.) Setting $\Omega_{\ell,m}=0$ if $|d_{\ell,m}|< (d_\ell+d_m)/2$ equation~\eqref{eqn:approx} holds. This means that if there is cointegration, the corresponding long-run covariance value is set to zero.
\end{description}
In particular, Model A and Model B of \cite{Lobato97} include FIVAR and VARFI models of \cite{SelaHurvich2008}. These multivariate ARFIMA models admit a causal linear representation. They include short-range dependence behaviour, through the terms $A(\L)$ and $B(\L)$. When these terms are equal to identity, we obtain an $ARFIMA(0,\bd,0)$ satisfying the causal linear representation given in subsection \ref{sec:causal_rep}. This (causal) multivariate $ARFIMA(0,\bd,0)$ is a subclass of \cite{KechagiasPipiras}'s (possibly non causal) definition.

\begin{rem} In the univariate setting, when $A(\cdot)$ and $B(\cdot)$ have no common zeros, \cite{KokoszkaTaqqu95} establish that the time series $X$ admits a linear representation $X(k)=\sum_{j=0}^{+\infty} C_j\epsilon_{k-j}$ with $C_j=\frac{B(1)}{A(1)\Gamma(d)}j^{d-1}+\BigO(j^{-1})$ when $j$ goes to infinity. The terms of the linear representation thus satisfy approximately the condition of \cite{KechagiasPipiras}. The extension to multivariate setting would yield information about the link between \cite{Lobato97}'s models and \cite{KechagiasPipiras}'s condition on causal representations. However, it has not been explored in this paper since we will not use this fact in any essential way. 
\end{rem}

\section{Multivariate wavelet Whittle estimation}
\label{sec:ww}

This section first defines the wavelet transform of the processes and then gives some results on the cross behaviour of the wavelet coefficients. The main point is the presence of a phase-shift caused by the differences in the long-memory parameters. Finally the proposed estimation scheme is derived, defining simultaneous estimators of the long-memory parameters and of the long-run covariance, which takes into account the phase-shift (based on the first order approximation~\eqref{eqn:approx}).

\subsection{The wavelet analysis}

Let $(\phi(\cdot),\psi(\cdot))$ be respectively a father and a mother wavelets. Their Fourier transforms are given by $\hat \phi(\lambda)=\int_{-\infty}^\infty \phi(t)e^{-i\lambda t}dt$ and $\hat \psi(\lambda)=\int_{-\infty}^\infty \psi(t)e^{-i\lambda t}dt$. 

At a given resolution $j\geq 0$, for $k\in\Z$, we define the dilated and translated functions $\phi_{j,k}(\cdot)=2^{-j/2}\phi(2^{-j}\cdot -k)$ and $\psi_{j,k}(\cdot)=2^{-j/2}\psi(2^{-j}\cdot -k)$. Thoughout the paper, we adopt the same convention as in \cite{Moulines07SpectralDensity} and \cite{Moulines08Whittle}, that is large values of the scale index $j$ correspond to coarse scales (low frequencies).

Let $\tilde{\bX}(t)=\sum_{k\in\Z}\bX(k)\phi(t-k)$. The wavelet coefficients of the process $\bX$ are defined by 
\[
\bW_{j,k}=\int_\R \tilde{\bX}(t)\psi_{j,k}(t)dt\quad j\geq 0, k\in\Z.
\]
For given $j\geq 0$ and $k\in\Z$, $\bW_{j,k}$ is a $p$-dimensional vector $\bW_{jk}=\begin{pmatrix}
W_{j,k}(1) & W_{j,k}(2) & \dots & W_{j,k}(p) \end{pmatrix}$, where $W_{j,k}(\ell)= \int_\R \tilde{X_\ell}(t)\psi_{j,k}(t)dt$.

The regularity conditions on the wavelet transform are expressed in the following assumptions. They will be needed throughout the paper.

\begin{description}
\item[(W1)] The functions $\phi(\cdot)$ and $\psi(\cdot)$ are integrable, have compact supports, $\int_\R \phi(t)dt=1$ and $\int \psi^2(t)dt=1$;
\item[(W2)] There exists $\alpha>1$ such that $\sup_{\lambda\in\R} |\hat \psi(\lambda)|(1+|\lambda|)^\alpha\,<\,\infty$, {\it i.e.} the wavelet is $\alpha$-regular;
\item[(W3)] The mother wavelet $\psi(\cdot)$ has $M>1$ vanishing moments.
\item[(W4)] The function $\sum_{k\in\Z}k^\ell\phi(\cdot -k)$ is polynomial with degree $\ell$ for all $\ell=1,\ldots,M-1$.
\item[(W5)] For all $i=1,\dots,p$, $(1+\beta)/2-\alpha\,<\,d_i\,\leq\, M$.
\end{description}

These conditions are not restrictive, and many standard wavelet basis satisfy them. Among them, Daubechies wavelets are compactly supported wavelets parametrized by the number of vanishing moments $M$. They are $\alpha$-regular with $\alpha$ an increasing function of $M$ going to infinity (see \cite{Daubechies}). Assumptions (W1)-(W5) will hold for Daubechies wavelet basis with sufficiently large $M$. 

\begin{rem} 
The couple of functions $(\phi(\cdot),\psi(\cdot))$ can be associated with a multiresolution analysis, but this condition is not necessary. Similarly, the orthogonality of the family $\{\psi_{j,k}(\cdot)\}$ is not required. See \cite{Moulines07SpectralDensity}, Section 3.
\end{rem}

Under assumption (W3), the wavelet transform performs an implicit differentiation of order $M$. Thus it is possible to apply it on nonstationary processes. In Fourier analysis, tapering procedures are necessary to consider directly nonstationary frameworks, see {\it e.g.}~\cite{VelascoRobinson00} and references therein. Some recent works propose a procedure that differentiates the data before tapering (\cite{HurvichChen00} and references therein). Another extension of Fourier to nonstationary frameworks has been proposed by \cite{Abadir} and applied by \cite{Nielsen11} in multivariate analysis.

In practice, a finite number of realisation of the process $\bX$, say $\bX(1),\ldots \bX(N)$, is observed.  
Since the wavelets have a compact support only a finite number $n_j$ of coefficients are non null at each scale $j$. Suppose without loss of generality that the support of $\psi(\cdot)$ is included in $[0,T_{\psi}]$ with $T_{\psi}\geq 1$. For every $j\geq 0$, define \begin{equation}\label{eqn:nj} 
n_j:=  \max{(0,2^{-j}(N-T_{\psi}+1))}.
\end{equation} 
Then for every $k< 0$ and $k> n_j$, the coefficients $\bW_{j,k}$ are set to zero because all the observations are not available. In the following, $n=\sum_{j=j_0}^{j_1} n_j$ denotes the total number of non-zero coefficients used for estimation. 

\subsection{Spectral approximation of wavelet coefficients}

Let us first recall some results of \cite{Moulines07SpectralDensity} for the wavelet transform of a univariate process. Let $W_{j,k}$ denote the wavelet coefficient of a unidimensional process $X$, with spectral density $ f(\lambda)=|1-e^{i\lambda}|^{-2d_0}f^S(\lambda)$, where $d_0 \in \R$ (note that $d_0$ can be outside of the interval $[-1/2,1/2]$). \cite{Moulines07SpectralDensity} state that under assumptions (W1)-(W5), the wavelet coefficients process $(W_{j,k})_{k\in\Z}$ is covariance stationary for any given $j\geq 0$. However they also stress that the between-scale coefficients are not decorrelated. It is shown that the wavelet coefficients are decorrelated when the wavelet bases is orthonormal and $d_0=0$ but it is not valid in general settings. Many proposition of estimators of long-memory can be found in \cite{WaveletWhittleWornell92, AbryVeitch98, Jensen99, Gonzaga}, among others. These works assume that the wavelet coefficients are decorrelated. We follow \cite{Bardet00} or \cite{Moulines08Whittle} in taking into account the within and between scales behaviour.

Let $j\geq 0$ and $j'=j-u\leq j$ be two given scales. Following \cite{Moulines07SpectralDensity}, the between-scale process is defined as the sequence $\{W_{j,k},W_{j-u,2^{u}k+\tau}, \tau=0,\dots 2^u-1\}_{k\in\Z}$. Let $\bD_{j,u}(\cdot;d_0)$ be the cross-spectral density between $\{W_{j,k}\}_{k\in\Z}$ and $\{W_{j-u,2^{u}k+\tau}, \tau=0,\dots 2^u-1\}_{k\in\Z}$. For any $\lambda\in(-\pi,\pi)$, $\bD_{j,u}(\lambda;d_0)$ is a $2^u$-dimensional vector. Theorem~1 in \cite{Moulines07SpectralDensity} establishes that under assumptions (W1)-(W5) there exists a positive constant $C$ such that for all $\lambda\in(-\pi,\pi)$,  \[
\left|\bD_{j,u}(\lambda;d_0)-2^{2jd_0}\bD_{\infty,u}(\lambda;d_0)\right|\leq C 2^{j(2d_0-\beta)},
\]
where 
\[
\bD_{\infty,u}(\lambda;d_0):=\sum_{t\in\Z}|\lambda+2t\pi|^{-2d_0}\hat\psi^\ast(\lambda+2t\pi)2^{-u/2}\hat\psi(2^{-u}(\lambda+2t\pi))\be_u(\lambda+2t\pi)
\]
with $\be_u(\xi)=\begin{pmatrix}
1& e^{-i2^{-u}\xi}&\dots &e^{-i2^{-u}(2^u-1)\xi}
\end{pmatrix}^T.$

The key point of our estimation is the extension of results obtained by \cite{Moulines07SpectralDensity} to the multivariate framework. Due to the complexity of the multivariate setting, we choose not to characterize the behaviour of the wavelet coefficients in terms of cross-spectral densities.

First, in order to extend the results of \cite{Moulines07SpectralDensity} to a multivariate framework, the covariance behaviour of ${\bW_{j,k}}$ for given $(j,k)$ is derived.
Let $\theta_{\ell,m}(j)$ denote the wavelet covariance at scale $j$ between processes $X_\ell$ and $X_m$, $\theta_{\ell,m}(j)=Cov(W_{j,k}(\ell), W_{j,k}(m))$ for any position $k$. Using the spectral density representation, $\theta_{\ell,m}(j)$ satisfies 
\[
\theta_{\ell,m}(j)=\int_{-\pi}^{\pi}{ (1-e^{-i\lambda})^{-d_\ell}(1-e^{i\lambda})^{-d_m}\Omega_{\ell,m}f_{\ell,m}^S(\lambda)|\mathbb{H}_j(\lambda)|^2\,d\lambda},
\]
where $\mathbb{H}_j$ is the gain function of the wavelet filter.\\

The following proposition establishes a second order approximation of the spectral density in a neighbourhood of zero, such as the one derived in \cite{Shimotsu07}.  

\begin{prop}\label{prop:cov_ondelettes0}
Let assumptions (W1)-(W5) hold.
Let $j\geq0$, $\ell, m=1,\dots,p$. Let $K_j$ be defined by 
\[
K_j(d_\ell,d_m)=\int_{-\infty}^{\infty}{|\lambda|^{-(d_\ell+d_m)}\cos(2^{-j}{\lambda(d_\ell-d_m)/2})|\hat\psi(\lambda)|^2\,d\lambda}.
\]
Then there exists a constant $C_0$ depending on $\beta, \min_i d_i, \max_i d_i, \bOmega$, $\phi(\cdot)$ and $\psi(\cdot)$ such that
 \begin{equation}
 \label{eqn:theta_cos}
|\theta_{\ell,m}(j) - \Omega_{\ell,m} 2^{j(d_\ell+d_m)}\cos(\pi(d_\ell-d_m)/2)K_j(d_\ell+d_m)|\leq C_0 L 2^{(d_\ell+d_m-\beta)j}. 
\end{equation}
 \end{prop}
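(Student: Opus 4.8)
The plan is to reduce the statement to the univariate spectral estimate already available (Theorem~1 of \cite{Moulines07SpectralDensity} with $u=0$), treating the phase-shift and the short-memory factor $f^S_{\ell,m}$ as perturbations. The starting point is the polar decomposition $1-e^{-i\lambda}=2|\sin(\lambda/2)|\,e^{i\,\text{sign}(\lambda)(\pi-|\lambda|)/2}$, which gives
\[
(1-e^{-i\lambda})^{-d_\ell}(1-e^{i\lambda})^{-d_m}=\bigl(2|\sin(\lambda/2)|\bigr)^{-(d_\ell+d_m)}\,e^{-i\,\text{sign}(\lambda)(d_\ell-d_m)(\pi-|\lambda|)/2}.
\]
Since $\theta_{\ell,m}(j)$ is the covariance of two real random variables it is real, so I may replace the integrand by its real part; using that $|\mathbb{H}_j(\cdot)|^2$ is even and $\Omega_{\ell,m}$ real, the phase contributes the even factor $\cos\!\bigl((d_\ell-d_m)(\pi-|\lambda|)/2\bigr)$ and
\[
\theta_{\ell,m}(j)=\Omega_{\ell,m}\int_{-\pi}^{\pi}\bigl(2|\sin(\lambda/2)|\bigr)^{-(d_\ell+d_m)}\cos\!\Bigl(\tfrac{d_\ell-d_m}{2}(\pi-|\lambda|)\Bigr)\,\mathrm{Re}\,f^S_{\ell,m}(\lambda)\,|\mathbb{H}_j(\lambda)|^2\,d\lambda .
\]

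First I would replace $f^S_{\ell,m}(\lambda)$ by $1$. By the membership $\bff^S\in\mathcal H(\beta,L)$ one has $|f^S_{\ell,m}(\lambda)-1|\le L|\lambda|^\beta$, so the induced error is bounded by $L\int_{-\pi}^{\pi}|\lambda|^{\beta}\bigl(2|\sin(\lambda/2)|\bigr)^{-(d_\ell+d_m)}|\mathbb{H}_j(\lambda)|^2\,d\lambda$; this is exactly the quantity controlled in \cite{Moulines07SpectralDensity} and, after the rescaling below, is of order $L\,2^{(d_\ell+d_m-\beta)j}$, which is the announced remainder. The same kind of bound disposes of the replacement of $2|\sin(\lambda/2)|$ by $|\lambda|$, the corresponding correction being $O(|\lambda|^{2})$ and hence, since $\beta\le 2$, of the same or smaller order near the frequencies selected by the filter.

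Next I would extract the scale factor. The magnitude $|\lambda|^{-(d_\ell+d_m)}$ is the univariate fractional spectral density with memory $d_0=(d_\ell+d_m)/2$, so the argument of Theorem~1 of \cite{Moulines07SpectralDensity} (case $u=0$) applies: changing variables $\xi=2^j\lambda$ and unfolding the aliasing sum turns the periodic gain $|\mathbb{H}_j|^2$ into $|\hat\psi|^2$ integrated over $\R$ and produces the prefactor $2^{j(d_\ell+d_m)}$. One has to carry the phase factor through this rescaling: writing $\tfrac{d_\ell-d_m}{2}(\pi-|\lambda|)=\tfrac{\pi(d_\ell-d_m)}{2}-\tfrac{d_\ell-d_m}{2}|\lambda|$ and $|\lambda|=2^{-j}|\xi|$, the constant part factors out as $\cos(\pi(d_\ell-d_m)/2)$ and the frequency-dependent part produces precisely the factor $\cos(2^{-j}\xi(d_\ell-d_m)/2)$ appearing in $K_j$.

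The main obstacle is the control of the phase remainder. Expanding $\cos\!\bigl(\tfrac{d_\ell-d_m}{2}(\pi-|\lambda|)\bigr)=\cos\!\bigl(\tfrac{\pi(d_\ell-d_m)}{2}\bigr)\cos\!\bigl(\tfrac{d_\ell-d_m}{2}|\lambda|\bigr)+\sin\!\bigl(\tfrac{\pi(d_\ell-d_m)}{2}\bigr)\sin\!\bigl(\tfrac{d_\ell-d_m}{2}|\lambda|\bigr)$, the first summand reconstructs $\cos(\pi(d_\ell-d_m)/2)\,2^{j(d_\ell+d_m)}K_j(d_\ell+d_m)$, and the whole difficulty is to show that the second summand, together with the errors coming from the reductions $f^S\equiv1$, $|\sin(\lambda/2)|\to|\lambda|/2$ and the aliasing, stays within $C_0L\,2^{(d_\ell+d_m-\beta)j}$. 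This is the delicate step, because the sign function inside the phase interacts with the evenness of $|\mathbb{H}_j|^2$, so that the cross term does \emph{not} vanish by symmetry and must be estimated directly after rescaling, where its argument $2^{-j}|\xi|$ is small; bounding it uniformly in $j$ and tracking the dependence of $C_0$ on $\beta,\min_i d_i,\max_i d_i,\bOmega,\phi,\psi$ is where the real work lies.
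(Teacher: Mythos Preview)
Your route is essentially the paper's: polar decomposition of $(1-e^{\pm i\lambda})^{-d}$, replacement of $f^S_{\ell,m}$ by $1$, of $2|\sin(\lambda/2)|$ by $|\lambda|$, and rescaling $\xi=2^j\lambda$ to extract $2^{j(d_\ell+d_m)}$ and $|\hat\psi|^2$, all controlled by the estimates of \cite{Moulines07SpectralDensity}. The paper organises the middle part slightly differently: instead of successive replacements it writes $A_{\ell,m}(j)$ with an explicit factor $g_{\ell,m}(\lambda)=\bigl|2\sin(\lambda/2)/\lambda\bigr|^{-(d_\ell+d_m)}$ and shows $g\in\mathcal H(2,L_g)$ by a Taylor expansion, combining this with $f^S\in\mathcal H(\beta,L)$ and the boundedness of $|\hat\phi|^2$; this is the same bookkeeping as your sequential reductions.

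The substantive divergence is your ``main obstacle''. In the paper, the factor $\cos(\pi(d_\ell-d_m)/2)$ is pulled out \emph{before} any rescaling: immediately after taking the real part one has the phase factor $\cos\!\bigl((\pi\,\mathrm{sign}(\lambda)-\lambda)(d_\ell-d_m)/2\bigr)$, and the paper asserts in one line (``the sinus function being odd'') that the sine cross-term in the addition formula drops out, leaving $\cos(\pi(d_\ell-d_m)/2)\cos(\lambda(d_\ell-d_m)/2)$ inside the integral. With that done there is no residual sine term to estimate after rescaling --- the frequency-dependent part $\cos(\lambda(d_\ell-d_m)/2)$ is exactly what becomes $\cos(2^{-j}\xi(d_\ell-d_m)/2)$ in $K_j$. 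So what you flag as ``where the real work lies'' is, in the paper's presentation, dispatched by a parity argument at the outset rather than by a post-rescaling estimate.

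Your scepticism about that parity step is not unreasonable: $\mathrm{sign}(\lambda)\sin(\lambda(d_\ell-d_m)/2)=\sin(|\lambda|(d_\ell-d_m)/2)$ is even, so the cross-term does not obviously vanish against an even weight. But note that if you keep it as a remainder and bound it only through $|\sin(2^{-j}|\xi|\cdot)|\le 2^{-j}|\xi|\cdot$, you obtain a contribution of order $2^{j(d_\ell+d_m-1)}$ \emph{without} a factor $L$, which does not fit inside $C_0L\,2^{(d_\ell+d_m-\beta)j}$ when $\beta>1$. Hence your proposed route, as written, does not close: either the sine term must be eliminated exactly (as the paper claims), or the target approximation has to be enlarged to absorb it.
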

 
Note that the second order approximation of the spectral density depends on $j$ also through the function $K_j$. The following proposition is deriving a first order approximation so that its logarithm is linear in $j$.   

\begin{prop}\label{prop:cov_ondelettes}
Let $K(\cdot)$ be defined by 
\[
K(\delta)=\int_{-\infty}^{\infty}{|\lambda|^{-\delta}|\hat\psi(\lambda)|^2\,d\lambda},\quad\delta\in(-\alpha,M).
\]
Under assumptions (W1)-(W5), there exists a constant $C$ depending on $\beta,\min_i d_i, \max_i d_i, \bOmega$, $\phi(\cdot)$ and $\psi(\cdot)$ such that, for all $j\geq 0$, for all $\ell, m=1,\dots,p$,
  \begin{equation}
 \label{eqn:theta}
 |\theta_{\ell,m}(j) - \Omega_{\ell,m} 2^{j(d_\ell+d_m)}\cos(\pi(d_\ell-d_m)/2)K(d_\ell+d_m)|\leq C L 2^{(d_\ell+d_m-\beta)j}.
 \end{equation}
\end{prop}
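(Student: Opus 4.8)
The plan is to obtain Proposition \ref{prop:cov_ondelettes} directly from Proposition \ref{prop:cov_ondelettes0}, since the two statements differ only in that the scale-dependent oscillatory kernel $K_j(d_\ell+d_m)$ is replaced by the scale-free constant $K(d_\ell+d_m)$. Writing $A_j=\Omega_{\ell,m}2^{j(d_\ell+d_m)}\cos(\pi(d_\ell-d_m)/2)$, the triangle inequality gives
\[
|\theta_{\ell,m}(j)-A_j K(d_\ell+d_m)|\le |\theta_{\ell,m}(j)-A_j K_j(d_\ell+d_m)| + |A_j|\,|K_j(d_\ell+d_m)-K(d_\ell+d_m)|.
\]
The first term on the right is already controlled by Proposition \ref{prop:cov_ondelettes0}, so the whole problem reduces to bounding the kernel difference $|K_j(d_\ell+d_m)-K(d_\ell+d_m)|$ by a quantity of order $2^{-\beta j}$. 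Since $|A_j|\le |\Omega_{\ell,m}|\,2^{j(d_\ell+d_m)}$, such a bound multiplied by $|A_j|$ produces precisely the announced rate $2^{(d_\ell+d_m-\beta)j}$.

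For the key estimate I would subtract the two integral representations, obtaining
\[
K_j(d_\ell+d_m)-K(d_\ell+d_m)=\int_{-\infty}^{\infty}|\lambda|^{-(d_\ell+d_m)}\bigl(\cos(2^{-j}\lambda(d_\ell-d_m)/2)-1\bigr)|\hat\psi(\lambda)|^2\,d\lambda,
\]
and then apply the elementary inequality $|1-\cos x|\le 2|x|^\beta$, valid for every $x\in\R$ and every $\beta\in(0,2]$ (split into $|x|\le 1$, where $1-\cos x\le x^2/2\le|x|^\beta/2$, and $|x|>1$, where $1-\cos x\le 2\le 2|x|^\beta$). Taking $x=2^{-j}\lambda(d_\ell-d_m)/2$ extracts the factor $2^{-\beta j}$ and leaves
\[
|K_j(d_\ell+d_m)-K(d_\ell+d_m)|\le 2\,\bigl|(d_\ell-d_m)/2\bigr|^\beta\,2^{-\beta j}\int_{-\infty}^{\infty}|\lambda|^{\beta-(d_\ell+d_m)}|\hat\psi(\lambda)|^2\,d\lambda = C\,2^{-\beta j}K(d_\ell+d_m-\beta).
\]

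It then remains to verify that $K(d_\ell+d_m-\beta)$ is finite, and this is where the wavelet assumptions enter and where I expect the only genuine subtlety to lie. Near $\lambda=0$ the $M$ vanishing moments (W3) give $\hat\psi(\lambda)=\BigO(|\lambda|^M)$, so the integrand behaves like $|\lambda|^{2M+\beta-(d_\ell+d_m)}$, which is integrable because $d_\ell,d_m\le M$ by (W5). At infinity the $\alpha$-regularity (W2) yields $|\hat\psi(\lambda)|^2=\BigO(|\lambda|^{-2\alpha})$, so the integrand decays like $|\lambda|^{\beta-(d_\ell+d_m)-2\alpha}$, which is integrable exactly when $d_\ell+d_m-\beta>1-2\alpha$; this is precisely the lower bound $d_i>(1+\beta)/2-\alpha$ of (W5) added over $\ell$ and $m$, so the hypotheses are sharp enough to secure convergence at infinity, which is the main obstacle. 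Combining the two paragraphs through the triangle inequality yields a bound of the form $(C_0L+C')\,2^{(d_\ell+d_m-\beta)j}$, where the $C_0L$ part comes from Proposition \ref{prop:cov_ondelettes0} and the $C'$ part from the kernel replacement; absorbing the two error sources into a single constant of the type stated (depending on $\beta,\min_i d_i,\max_i d_i,\bOmega,\phi(\cdot),\psi(\cdot)$) gives the claim.
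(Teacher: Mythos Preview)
Your argument is correct and takes a genuinely different route from the paper. The paper proves Propositions~\ref{prop:cov_ondelettes0} and~\ref{prop:cov_ondelettes} in parallel, starting from the integral representation of $\theta_{\ell,m}(j)$: for Proposition~\ref{prop:cov_ondelettes} it absorbs the factor $\cos(\lambda(d_\ell-d_m)/2)$ into the auxiliary function $g_{\ell,m}(\lambda)=|2\sin(\lambda/2)/\lambda|^{-(d_\ell+d_m)}\cos(\lambda(d_\ell-d_m)/2)$, notes that this modified $g$ still lies in a class $\mathcal H(2,L_g)$, and then reruns the same approximation argument used for Proposition~\ref{prop:cov_ondelettes0}. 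You instead treat Proposition~\ref{prop:cov_ondelettes0} as a black box and control the kernel gap $|K_j-K|$ directly via the elementary inequality $|1-\cos x|\le 2|x|^\beta$. Your route is more modular and makes the dependence of the second error term on $\beta$, $\bd$ and $\psi$ fully explicit (through $K(d_\ell+d_m-\beta)$), while the paper's route avoids invoking Proposition~\ref{prop:cov_ondelettes0} but at the cost of repeating the whole integral-approximation machinery. One small remark: your final bound is of the form $(C_0L+C')\,2^{(d_\ell+d_m-\beta)j}$ with $C'$ independent of $L$, whereas the statement reads $CL\,2^{(d_\ell+d_m-\beta)j}$; the paper's own derivation has exactly the same feature (the contribution coming from $g\in\mathcal H(2,L_g)$ carries no factor $L$ either), so this is a harmless imprecision in the stated form of the bound rather than a defect in your reasoning.
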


Observe that the approximation \eqref{eqn:theta} shows that the difficulty with the case of multivariate long-memory processes is the appearance of a phase-shift that has to be taken into account for the estimation of the covariance $\bOmega$. Indeed, $\theta_{\ell,m}(j)$ is proved to be close to a term proportional to $\cos(\pi(d_\ell-d_m)/2)$. Then, if $d_\ell\in[-1/2,1/2]$ and $d_m=2k+1+d_\ell^S$ with $k\in\N$, Proposition~\ref{prop:cov_ondelettes} implies that for all $j$, $\theta_{\ell,m}(j)$ is negligible, meaning that the covariance of the wavelet coefficients is close to zero. Consequently, using the covariance of the wavelet coefficients does not allow to estimate the matrix $\bOmega$ accurately. This example corresponds to a covariance-stationary process $X_\ell$ and a process $X_m$ such that $\Delta X_m$ is covariance stationary, both with the same long-memory parameter $d_\ell$. We show in what follows that the consistency of the long memory parameters is not affected by bias in the estimation of $\bOmega$.

The covariance behaviour for the between scale process is derived in the following proposition.
\begin{prop}\label{prop:cov_croisee}

For all $j\geq 0$, $u\geq 0$ and $\lambda\in(-\pi,\pi)$, we define  \begin{align*}
D^{(j)}_{u;\tau}(\lambda;f_{\ell,m}(\cdot))&=\sum_{t\in\Z}f_{\ell,m}(2^{-j}(\lambda+2t\pi))2^{-j}\H_j(2^{-j}(\lambda+2t\pi))\H_{j-u}^\ast(2^{-j}(\lambda+2t\pi))e^{-i2^{-u}\tau(\lambda+2t\pi)}\\
\tilde D_{u,\tau}(\lambda;\delta)&=\sum_{t\in\Z} |\lambda+2t\pi|^{-\delta}\hat\psi^\ast(\lambda+2t\pi)2^{-u/2}\hat\psi(2^{-u}(\lambda+2t\pi))e^{-i2^{-u}\tau(\lambda+2t\pi)}
\end{align*}
and $
K_{u,\tau}(v;\delta)=\int_{-\pi}^{\pi}{\tilde D_{u,\tau}(\lambda;\delta)e^{i\lambda v}\,d\lambda}.
$

Then for all $j\geq 0$, for all $u,v\geq 0$, $\tau=0,\dots,2^u-1$,
\[
Cov(W_{j,k}(\ell),W_{j-u,2^{-u}k'+\tau}(m))=\int_{-\pi}^\pi D^{(j)}_{u;\tau}(\lambda;(\ell,m))e^{i\lambda(k-k')}d\lambda.
\]
Under assumptions (W1)-(W5), there exists a constant C depending on $\beta,\min_i d_i, \max_i d_i, \bOmega, \phi(\cdot)$ and $\psi(\cdot)$ such that, for all $j\geq 0$, for all $u,v\geq 0$, $\tau=0,\dots,2^u-1$, for all $\lambda\in(-\pi,\pi)$,
\begin{equation*}
\left|D^{(j)}_{u;\tau}(\lambda;f_{\ell,m}(\cdot)) - \Omega_{\ell,m} 2^{j(d_\ell+d_m)}\cos({\pi(d_\ell-d_m)/2})\tilde D_{u,\tau}(\lambda;d_\ell+d_m)\right|\\ \leq C L 2^{(d_\ell+d_m-\beta)j}.
\end{equation*}
and
\begin{multline*}
\left|Cov[W_{j,k}(\ell)W_{j-u,2^{-u}k'+\tau}(m)] - \Omega_{\ell,m} 2^{j(d_\ell+d_m)}\cos({\pi(d_\ell-d_m)/2})K_{u,\tau}(k-k';d_\ell+d_m)\right|\\ \leq C L 2^{(d_\ell+d_m-\beta)j}.
\end{multline*}
\end{prop}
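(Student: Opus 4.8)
The plan is to reproduce, at the level of the between--scale cross--spectral object $D^{(j)}_{u;\tau}$, the computation that yields the within--scale approximation \eqref{eqn:theta}, and then to pass from the spectral bound to the covariance bound by Fourier inversion. First I would establish the integral identity for $Cov(W_{j,k}(\ell),W_{j-u,2^{-u}k'+\tau}(m))$. Writing each wavelet coefficient as a linear filter of the differenced covariance--stationary process $\bZ$ and using its spectral representation, the covariance becomes an integral over $(-\pi,\pi)$ of the cross--spectral density $f_{\ell,m}$ against the product of the two gains $\H_j$ and $\H_{j-u}^\ast$ and the shift factor $e^{-i2^{-u}\tau(\cdot)}$; folding the integration domain over the scale $2^j$ (i.e. periodising by $2\pi$) produces exactly the aliasing sum $\sum_{t\in\Z}$ in the definition of $D^{(j)}_{u;\tau}$, so that the first displayed identity holds. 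This step is routine and parallels the univariate construction of \cite{Moulines07SpectralDensity} giving $\bD_{j,u}(\cdot;d_0)$ and $\bD_{\infty,u}(\cdot;d_0)$.

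Next I would approximate $D^{(j)}_{u;\tau}(\lambda;f_{\ell,m})$ term by term in $t$. For each $t$ the argument of $f_{\ell,m}$ is $2^{-j}(\lambda+2t\pi)$, which is small for large $j$ because the decay (W2) with $\alpha>1$ confines the effective sum to bounded $|\lambda+2t\pi|$. On that range I would insert the exact factorisation $1-e^{-i\omega}=2|\sin(\omega/2)|e^{i\,\text{sign}(\omega)(\pi-|\omega|)/2}$, which gives modulus $\sim|\omega|^{-(d_\ell+d_m)}$ and the phase $\sim e^{-i\,\text{sign}(\omega)\pi(d_\ell-d_m)/2}$ appearing in \eqref{eqn:approx}, use the short--memory bound $|f^S_{\ell,m}-1|\leq L|\omega|^\beta$ from $\mathcal H(\beta,L)$, and replace $2^{-j/2}\H_j(2^{-j}\cdot)$ by $\hat\psi$ with the controlled remainder of \cite{Moulines07SpectralDensity}. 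The factor $|2^{-j}(\lambda+2t\pi)|^{-(d_\ell+d_m)}$ produces the prefactor $2^{j(d_\ell+d_m)}$, the gain product collapses to $\hat\psi^\ast(\lambda+2t\pi)2^{-u/2}\hat\psi(2^{-u}(\lambda+2t\pi))$, and after summation over $t$ the three remainders each contribute at order $2^{(d_\ell+d_m-\beta)j}$, the short--memory term being dominant for $\beta\leq 2$.

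The crux is the reduction of the sign--dependent phase to the real, $\lambda$--independent factor $\cos(\pi(d_\ell-d_m)/2)$ standing in front of $\tilde D_{u,\tau}$. As in the proof of Proposition~\ref{prop:cov_ondelettes}, the mechanism is that the odd--in--frequency, imaginary part $-i\,\text{sign}(\omega)\sin(\pi(d_\ell-d_m)/2)$ of the phase drops out against the Hermitian symmetries of the integrand: for a real wavelet $\hat\psi(-\xi)=\overline{\hat\psi(\xi)}$, and for a real process $f_{\ell,m}(-\lambda)=\overline{f_{\ell,m}(\lambda)}$, so that the surviving real factor is precisely $\cos(\pi(d_\ell-d_m)/2)$. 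I expect this to be the main obstacle, because in the between--scale setting the gain product $\hat\psi^\ast(\xi)\hat\psi(2^{-u}\xi)$ is genuinely complex (unlike $|\hat\psi|^2$ in the within--scale case) and carries the extra factors $e^{-i2^{-u}\tau\xi}$ and, after inversion, $e^{i\lambda(k-k')}$; the cancellation of the odd part and the error control must therefore be tracked uniformly in $u\geq 0$, in $\tau\in\{0,\dots,2^u-1\}$ and in the lag $k-k'$, so that the constant $C$ does not degrade with $u$. Keeping the $\cos$ factor exact while discarding the $j$--dependence of $K_j$ in favour of $K$ costs only $O(2^{(d_\ell+d_m-2)j})$, which is absorbed since $\beta\leq 2$, exactly as in the passage from Proposition~\ref{prop:cov_ondelettes0} to Proposition~\ref{prop:cov_ondelettes}.

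Finally I would deduce the covariance bound from the spectral bound. Multiplying the pointwise estimate for $D^{(j)}_{u;\tau}(\lambda)$ by $e^{i\lambda(k-k')}$ and integrating over $(-\pi,\pi)$, the triangle inequality together with $|e^{i\lambda(k-k')}|=1$ and the finite length of the interval turns the right--hand side into a constant multiple of $L\,2^{(d_\ell+d_m-\beta)j}$, while the main term integrates to $\Omega_{\ell,m}2^{j(d_\ell+d_m)}\cos(\pi(d_\ell-d_m)/2)\int_{-\pi}^{\pi}\tilde D_{u,\tau}(\lambda;d_\ell+d_m)e^{i\lambda(k-k')}\,d\lambda=\Omega_{\ell,m}2^{j(d_\ell+d_m)}\cos(\pi(d_\ell-d_m)/2)K_{u,\tau}(k-k';d_\ell+d_m)$, which is the claimed second estimate after renaming the constant.
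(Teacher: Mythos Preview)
Your proposal follows precisely the approach the paper indicates: the proof of Proposition~\ref{prop:cov_croisee} is omitted there, with only the remark that it ``is based on similar arguments'' to Propositions~\ref{prop:cov_ondelettes0} and~\ref{prop:cov_ondelettes}, and your decomposition---spectral identity by periodisation, termwise replacement of $f_{\ell,m}$ via the factorisation of $1-e^{-i\lambda}$ and the $\mathcal H(\beta,L)$ bound on $f^S$, gain approximation $2^{-j/2}\H_j(2^{-j}\cdot)\to\hat\psi$, then Fourier inversion for the covariance bound---mirrors the within-scale argument exactly. You also correctly isolate the only genuinely new point, namely the reduction of the sign-dependent phase $e^{-i\,\mathrm{sign}(\omega)\pi(d_\ell-d_m)/2}$ to the real factor $\cos(\pi(d_\ell-d_m)/2)$ when the gain product $\hat\psi^\ast(\xi)\hat\psi(2^{-u}\xi)e^{-i2^{-u}\tau\xi}$ is complex; note that the Hermitian symmetry you invoke pairs $\lambda$ with $-\lambda$ (and $t$ with $-t$) and therefore delivers the cancellation of the sine part cleanly at the level of the integrated covariance and of $K_{u,\tau}$, whereas for the pointwise estimate on $D^{(j)}_{u;\tau}(\lambda)$ one must keep the full phase and absorb the $e^{i\omega(d_\ell-d_m)/2}$ correction into the $\mathcal H(2,\cdot)$-type remainder exactly as in the passage from Proposition~\ref{prop:cov_ondelettes0} to Proposition~\ref{prop:cov_ondelettes}.
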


When $u=0$ and $2^uk'+\tau=k$, the quantity $K_{0,0}(0;d_\ell+d_m)$ is equal to $\int_{-\infty}^\infty{|\lambda|^{-(d_\ell+d_m)} |{\hat\psi(\lambda)}|^2 }d\lambda$. Let us remark that $K_{0,0}(0;\cdot)$ is equal to the function $K(\cdot)$ defined in Proposition~\ref{prop:cov_ondelettes}.

\subsection{Wavelet Whittle estimation}

Let $j_1 \geq j_0 \geq 1$ be respectively the maximal and the minimal resolution levels that are used in the estimation procedure. The estimation is based on the vectors of wavelets coefficients $\left\{\bW_{j,k},\, j_0\leq j\leq j_1,\, k\in\Z\right\}$.

The wavelet Whittle approximation of the negative log-likelihood is denoted by $\mathcal L(\cdot)$. The criterion corresponds to the negative log-likelihood of Gaussian vectors $(W_{j,k}(\ell))_{j,k,\ell}$. \cite{Hannan, FoxTaqqu86} prove that the Whittle approximation is giving satisfactory results for nongaussian processes.
In our framework, the wavelet Whittle criterion is defined as,
\begin{equation} \label{eqn:whittle}
\mathcal L(\bG(\bd),\bd) = \frac{1}{n}\sum_{j=j_0}^{j_1} \left[ n_j \log\det\left(\bLambda_j(\bd)\bG(\bd)\bLambda_j(\bd)\right)+\sum_{k=0}^{n_j} \bW_{j,k}^T\left(\bLambda_j(\bd)\bG(\bd)\bLambda_j(\bd)\right)^{-1}\bW_{j,k}\right],
\end{equation}

where $\bLambda_j(\bd)$ and the matrix $\bG(\bd)$ are obtained with Proposition~\ref{prop:cov_ondelettes},
\[
\bLambda_j(\bd)=\diag\left(2^{j\bd}\right)
\]
and the $(\ell,m)$-{th} element of the matrix $\bG(\bd)$ is $G_{\ell,m}(\bd)=\Omega_{\ell,m}K(d_\ell+d_m)cos({\pi(d_\ell-d_m)/2})$.

For each $j\geq 0$, the quantity $\sum_k\bW_{j,k}^T\left(\bLambda_j(\bd)\bG(\bd)\bLambda_j(\bd)\right)^{-1}\bW_{j,k}$ has a dimension equal to 1 and is equal to its trace. Thus, \begin{equation}
\label{eqn:critere}
\mathcal L(\bG(\bd),\bd) = \frac{1}{n}\sum_{j=j_0}^{j_1} \left[n_j  \log\det\left(\bLambda_j(\bd)\bG(\bd)\bLambda_j(\bd)\right)+\text{trace}\left(\left(\bLambda_j(\bd)\bG(\bd)
\bLambda_j(\bd)\right)^{-1}\bI(j)\right)\right],
\end{equation}
where $\bI(j)=\sum_{k=0}^{n_j} \bW_{j,k} \bW_{j,k}^T$. 
Note that this expression is very similar to the multivariate Fourier Whittle estimator of \cite{Shimotsu07}. Here we replace the periodogram by the wavelet scalogram $\bI(j)$. 

\begin{rem} In Fourier analysis, {\it e.g.}~\cite{Shimotsu07}, the periodogram is normalized. In wavelet analysis, the normalization factor may depend on the resolution $j$, and the scalogram is not normalized. For every $j$ the scalogram $\bI(j)$ should be normalized by $n_j$. In the remainder of the paper, we will keep the initial $\bI(j)$ for convenience.
\end{rem}

Deriving expression~\eqref{eqn:critere} with respect to the matrix $\bG$ yields
\[
\frac{\partial \mathcal L}{\partial \bG}(\bG,\bd)=\frac{1}{n}\sum_{j=j_0}^{j_1}\left[n_j \bG^{-1}-\bG^{-1}\bLambda_j(\bd)^{-1}
\bI(j)\bLambda_j(\bd)^{-1} \bG^{-1}\right].
\]
Hence, the minimum for fixed $\bd$ is attained at \begin{equation}\label{eqn:G}
\hat \bG(\bd) =\frac{1}{n} \sum_{j=j_0}^{j_1} \bLambda_j(\bd)^{-1}
\bI(j)\bLambda_j(\bd)^{-1}.
\end{equation}
Replacing $\bG(\bd)$ by $\hat \bG(\bd)$, the objective criterion is defined as 
\begin{align}
R(\bd)&:=\mathcal L(\hat \bG(\bd),\bd)-1\nonumber\\
&= \log\det(\hat \bG(\bd)) + \frac{1}{n}\sum_{j=j_0}^{j_1}  n_j \log(\det\left(\bLambda_j(\bd)\bLambda_j(\bd)\right), \nonumber \\
\label{eqn:R}&= \log\det(\hat \bG(\bd)) + 2\log(2)\left(\frac{1}{n}\sum_{j=j_0}^{j_1} j n_j\right)\left(\sum_{\ell=1}^p d_\ell\right).
\end{align}
The vector of the long-memory parameters $\bd$ is estimated by $\hat \bd=\argmin_{\bd} R(\bd)$. The estimator $\hat \bd$ is exactly equal to the one introduced in \cite{Moulines08Whittle} when the matrix $\bOmega$ is diagonal corresponding to univariate setting. 

In a second step of estimation we define $\hat \bG(\hat \bd)$, estimator of $\bG(\bd)$. Finally applying the correction of phase-shift yields the estimation of the covariance matrix $\bOmega$ 
\begin{equation}\label{eqn:Ohat}\hat\Omega_{\ell,m}=\hat G_{\ell,m}(\hat \bd)/(\cos(\pi(\hat d_\ell-\hat d_m)/2) K(\hat d_\ell+\hat d_m)).\end{equation}
Equation~\eqref{eqn:Ohat} is correctly defined as the probability that $\hat d_\ell-\hat d_m$ is exactly congruent to 1 {\it modulo} 2 is null. Consequently estimator $\hat\bOmega$ is defined almost surely. Yet, empirically when $d_\ell-d_m$ is close to 1 {\it modulo} 2, the estimation of $\bOmega$ may be strongly biased.

\section{Main results}
\label{sec:prop}

In the above, we have defined the MWW estimator, the following section deals with the asymptotic behaviour of the estimators. The consistency of the estimators is established, under a condition which controls the variance of the empirical wavelet cross-covariances. The first part of this section introduces this condition and characterizes a class of processes for which it is satisfied. The second part details the asymptotic results of convergence.

\subsection{Additional condition}

The following condition is an additional assumption, which gives an asymptotic control of the wavelet scalogram.

{\bf \large Condition (C)}
\[
\text{For all } \ell,m=1,\ldots,p,\quad \sup_n \sup_{j\geq 0} \frac{1}{n_j 2^{2j(d_\ell+d_m)}} Var\left({I_{\ell,m}(j)}\right) \,<\,\infty
\]

This condition is slightly more restrictive than condition~(9) of \cite{Moulines08Whittle} in a univariate framework, where their spectral density of the process is only defined on a neighbourhood of zero. 

The following proposition gives a class of multivariate processes such that Condition~(C) holds.

\begin{prop}
\label{prop:condition}
Suppose that there exists a sequence $\{\bA(u)\}_{u\in\Z}$ in $\R^{p\times p}$ such that $\sum_u \|\bA(u)\|_\infty^2<\infty$ and 
\[
\forall t,\,~~~\Delta^{D} \bX(t)=\bm+\sum_{u\in\Z} \bA({t+u}) \bepsilon(t)
\]
with $\bepsilon(t)$ weak white noise process, in $\R^p$. Let $\mathcal F_{t-1}$ denote the $\sigma$-field of events generated by $\{\bepsilon(s), \,s\leq t-1\}$. Assume that $\bepsilon$ satisfies $\E[\bepsilon(t)|\mathcal F_{t-1}]=0$, $\E[\epsilon_a(t)\epsilon_b(t)|\mathcal F_{t-1}]=\1_{a=b}$ and $\E[\epsilon_a(t)\epsilon_b(t)\epsilon_c(t)\epsilon_d(t)|\mathcal F_{t-1}]=\mu_{a,b,c,d}$ with $|\mu_{a,b,c,d}|\leq \mu_\infty<\infty$, for all $a,b,c,d=1,\ldots,p$.\\
Then, under assumptions (W1)-(W5), Condition (C) holds.

\end{prop}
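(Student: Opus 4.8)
The plan is to expand the variance of $I_{\ell,m}(j)=\sum_{k=0}^{n_j}W_{j,k}(\ell)W_{j,k}(m)$ as
\[
\Var(I_{\ell,m}(j))=\sum_{k,k'=0}^{n_j}\cov\big(W_{j,k}(\ell)W_{j,k}(m),W_{j,k'}(\ell)W_{j,k'}(m)\big),
\]
and to control it through the joint law of the wavelet coefficients. Since $\psi$ has $M>1$ vanishing moments (W3), the filter producing $W_{j,k}(\ell)$ annihilates polynomials of degree $<M$ and therefore acts on $X_\ell$ only through $\Delta^{D}X_\ell$; inserting the assumed linear representation $\Delta^{D}\bX(t)=\bm+\sum_{u\in\Z}\bA(u)\bepsilon(t-u)$ expresses each coefficient as a linear functional of the innovations,
\[
W_{j,k}(\ell)=\sum_{s\in\Z}\sum_{a=1}^{p}c^{(\ell)}_{j,k,a}(s)\,\epsilon_a(s),
\]
with deterministic weights. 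As the $\bepsilon(t)$ form an orthonormal white noise, $\sum_{s,a}c^{(\ell)}_{j,k,a}(s)^2=\Var(W_{j,k}(\ell))=\theta_{\ell,\ell}(j)$. Because the innovations have zero mean, the covariance of the products splits into a second-order part and a fourth-cumulant part,
\[
\cov\big(W_{j,k}(\ell)W_{j,k}(m),W_{j,k'}(\ell)W_{j,k'}(m)\big)=\gamma^{\ell\ell}_{k,k'}\gamma^{mm}_{k,k'}+\gamma^{\ell m}_{k,k'}\gamma^{m\ell}_{k,k'}+\kappa_{k,k'},
\]
where $\gamma^{\ell m}_{k,k'}=\cov(W_{j,k}(\ell),W_{j,k'}(m))$ and $\kappa_{k,k'}$ is the joint fourth cumulant of the four coefficients. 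I would bound the two contributions separately, each by a constant multiple of $n_j2^{2j(d_\ell+d_m)}$, uniformly in $j$ and $n$.

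For the second-order part I would pass to the within-scale spectral densities. For fixed $j$ the sequences $\{W_{j,k}(\ell)\}_k$ and $\{W_{j,k}(m)\}_k$ are jointly stationary with (cross-)spectral densities $g^{(j)}_{\ell m}(\lambda)=D^{(j)}_{0;0}(\lambda;f_{\ell,m}(\cdot))$ in the notation of Proposition~\ref{prop:cov_croisee} (case $u=0$, $\tau=0$). That proposition, together with the boundedness of $\tilde D_{0,0}(\cdot;d_\ell+d_m)$ — which holds because the vanishing moments give $\hat\psi(\lambda)=O(|\lambda|^{M})$ near $0$ and (W2) controls the tails, so that $\sup_\lambda|\tilde D_{0,0}(\lambda;\delta)|<\infty$ for $\delta\le 2M$ — yields $\sup_\lambda|g^{(j)}_{\ell m}(\lambda)|\lesssim 2^{j(d_\ell+d_m)}$ uniformly in $j$. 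By Parseval, $\sum_v\gamma^{\ell\ell}_{0,v}\gamma^{mm}_{0,v}=\frac1{2\pi}\int_{-\pi}^{\pi}g^{(j)}_{\ell\ell}(\lambda)g^{(j)}_{mm}(-\lambda)\,d\lambda$ is bounded by $\sup_\lambda|g^{(j)}_{\ell\ell}|\,\sup_\lambda|g^{(j)}_{mm}|\lesssim 2^{2j(d_\ell+d_m)}$, and likewise for the cross term. Writing $v=k-k'$ and using $\#\{(k,k'):k-k'=v,\,0\le k,k'\le n_j\}\le n_j+1$, the double sum of the second-order part is at most $(n_j+1)\sum_v\big(|\gamma^{\ell\ell}_{0,v}\gamma^{mm}_{0,v}|+|\gamma^{\ell m}_{0,v}\gamma^{m\ell}_{0,v}|\big)\lesssim n_j2^{2j(d_\ell+d_m)}$.

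The fourth-cumulant part is the main obstacle. The key point is that the martingale-difference conditions $\E[\bepsilon(t)\mid\mathcal F_{t-1}]=0$ and $\E[\epsilon_a(t)\epsilon_b(t)\mid\mathcal F_{t-1}]=\1_{a=b}$ force the joint cumulant $\text{cum}(\epsilon_a(s_1),\epsilon_b(s_2),\epsilon_c(s_3),\epsilon_d(s_4))$ to vanish unless $s_1=s_2=s_3=s_4$: conditioning successively on the $\sigma$-field attached to the largest time index shows that any configuration with a strict maximum, or with the four times split into two distinct pairs, produces only products of second moments, which the cumulant removes. When the four indices coincide the cumulant reduces to a fixed combination $\kappa_{a,b,c,d}$ of $\mu_{a,b,c,d}$ and Kronecker symbols, bounded by some $\kappa_\infty<\infty$ thanks to $|\mu_{a,b,c,d}|\le\mu_\infty$. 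Hence, by multilinearity of cumulants,
\[
\sum_{k,k'}\kappa_{k,k'}=\sum_{s\in\Z}\sum_{a,b,c,d}\kappa_{a,b,c,d}\,Q^{\ell m}_{ab}(s)\,Q^{\ell m}_{cd}(s),\qquad Q^{\ell m}_{ab}(s):=\sum_{k}c^{(\ell)}_{j,k,a}(s)c^{(m)}_{j,k,b}(s),
\]
so that $\big|\sum_{k,k'}\kappa_{k,k'}\big|\le\kappa_\infty\sum_{s}\big(\sum_{a,b}|Q^{\ell m}_{ab}(s)|\big)^2$.

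To close the estimate I would introduce $g_\ell(s):=\sum_{a,k}c^{(\ell)}_{j,k,a}(s)^2$. Applying Cauchy--Schwarz (over $k$ inside $Q^{\ell m}_{ab}$, then over the components and over $k$) reduces the cumulant sum to $\lesssim\sum_{s}g_\ell(s)g_m(s)\le\big(\sup_s g_\ell(s)\big)\sum_s g_m(s)$, where $\sum_s g_m(s)=\sum_k\Var(W_{j,k}(m))=(n_j+1)\theta_{m,m}(j)\asymp n_j2^{2jd_m}$ by Proposition~\ref{prop:cov_ondelettes}. It remains to show $\sup_s g_\ell(s)\lesssim 2^{2jd_\ell}$. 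Here, for each component $a$, the quantity $\sum_k c^{(\ell)}_{j,k,a}(s)^2$ is the squared norm of the $s$-th column of the analysis operator $\epsilon_a\mapsto(W_{j,k}(\ell))_k$, hence is bounded by the squared operator norm of that convolution-and-downsampling operator, which equals $\sup_\lambda$ of the corresponding within-scale spectral density; by the boundedness already established, $\sup_s g_\ell(s)\le p\,\sup_\lambda g^{(j)}_{\ell\ell}(\lambda)\lesssim 2^{2jd_\ell}$. Combining, $|\sum_{k,k'}\kappa_{k,k'}|\lesssim n_j2^{2j(d_\ell+d_m)}$, matching the second-order part. Dividing by $n_j2^{2j(d_\ell+d_m)}$ and noting that every bound above is uniform in $j\ge0$ and in $n$ then gives Condition~(C). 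The points requiring the most care are the uniform-in-$j$ boundedness of the within-scale spectral densities and, above all, the column-norm bound $\sup_s g_\ell(s)\lesssim 2^{2jd_\ell}$, which is precisely what prevents the non-Gaussian contribution from accumulating across scales.
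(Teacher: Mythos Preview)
Your argument is correct and follows the same global decomposition as the paper: write $\Var(I_{\ell,m}(j))$ as a sum over $(k,k')$, split each covariance into the two second-order pairings plus a fourth-cumulant term, and treat the two pieces separately. For the second-order pairings you and the paper do the same thing (Parseval on the within-scale cross-spectral densities $D^{(j)}_{0;0}$, then the approximation of Proposition~\ref{prop:cov_croisee}).

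The genuine difference is in the cumulant term. The paper does not pass through column norms or operator norms at all: after reducing to the single time $t$, it applies Cauchy--Schwarz the \emph{other} way, summing over the input time $t$ for fixed output index $k$. This immediately gives $\sum_{t,a}B_{\ell,a}(j,2^jk-t)^2=\theta_{\ell,\ell}(j)$, and two more applications yield $V_3\le(1+\mu_\infty)\,n_j\,p^2\,\theta_{\ell,\ell}(j)\theta_{m,m}(j)$, after which Proposition~\ref{prop:cov_ondelettes} closes the bound. Your route instead controls $\sup_s g_\ell(s)$ (the column norm, output index summed for fixed input time) by the operator norm of the filter-and-downsample map, and you correctly identify that $\|T_a\|_{op}^2=\sup_\lambda g_a^{(j)}(\lambda)\le\sup_\lambda g^{(j)}_{\ell\ell}(\lambda)\lesssim 2^{2jd_\ell}$. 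This is a nice observation and it does work, but it costs you an extra spectral/operator-norm step that the paper avoids by simply recognising the row sum as the wavelet variance. In short: same skeleton, but the paper's treatment of the non-Gaussian part is shorter and more elementary, while yours is more structural.
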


The proof is given in appendix~\ref{proof:conditionC}.

This assumption of a Cramer-Wold type decomposition of the process $\bX$ with a linear fourth-order stationary process was made among others by \cite{Lobato99}, \cite{Shimotsu07}, \cite{Giraitis97rate}, or Theorem~1 of \cite{Moulines08Whittle}. 
As discussed in \cite{Lobato99}, there exist models with density~\eqref{eqn:density}, where the condition (C) is not satisfied, however, it is not particularly restrictive.

\subsection{Convergence}

We suppose that we have $N$ observations of a multivariate $p$-vector process $\bX$, namely $\bX(1),\dots \bX(N)$ with a spectral density satisfying approximation~\eqref{eqn:approx} around the zero frequency. For given functions $(\phi(\cdot),\psi(\cdot))$, and for given levels $0\leq j_0\leq j_1$, the estimator of $\bd$ is the argument minimizing $R$, defined by~\eqref{eqn:R}, and the matrix $\bG$ is estimated by $\hat \bG(\hat \bd)$ defined by~\eqref{eqn:G}. From now on, we will add the superscript $0$ to denote the true parameter values, $\bd^0$ and $\bG^0$. 

The following assumptions on the resolution levels $j_0$ and $j_1$ will be needed throughout the paper. We assume that either the difference $j_1-j_0$ is constant or it tends to infinity as $N$ tends to infinity.

The following result shows the consistency of the estimators and the rate of convergence. The proofs are given in Appendix.

\begin{thm}
\label{prop:convergence}
Assume that (W1)-(W5) and Condition (C) hold. If $j_0$ and $j_1$ are chosen such that $2^{-j_0\beta}+N^{-1/2} 2^{j_0/2}\to 0$ and $j_0 < j_1 \leq j_N$ with $j_N=\max\{j,n_j\geq 1\}$, then 
\[
\hat \bd -\bd^0=o_\P(1).
\]
\end{thm}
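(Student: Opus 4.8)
The plan is to treat $\hat\bd$ as an M-estimator and run a standard minimum-contrast consistency argument: identify the deterministic function $\bar R(\bd)$ to which the random criterion $R(\bd)$ of \eqref{eqn:R} converges, show that $\bar R$ is uniquely minimised at $\bd^0$, and then upgrade pointwise to uniform convergence over a compact parameter set, so that the argmin converges. Compactness comes from (W5), which confines each $d_i$ to $((1+\beta)/2-\alpha,M]$ (after checking that $\hat\bG(\bd)$ stays positive definite, so that $\log\det$ is well defined). To fix the limit, write $w_j=n_j/n$, so that $\hat\bG(\bd)=\sum_{j=j_0}^{j_1}w_j\,\bLambda_j(\bd)^{-1}\bI(j)\bLambda_j(\bd)^{-1}$; since the entries of $\E[\bI(j)]$ are proportional to $\theta_{\ell,m}(j)$, Proposition~\ref{prop:cov_ondelettes} gives to leading order $\E[\bI(j)]\approx n_j\,\bLambda_j(\bd^0)\bG^0\bLambda_j(\bd^0)$. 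Hence $\hat\bG(\bd)$ concentrates around $\bar\bG(\bd):=\sum_j w_j M_j(\bd)$, with $M_j(\bd):=\diag(2^{-j(d_\ell-d_\ell^0)})\,\bG^0\,\diag(2^{-j(d_\ell-d_\ell^0)})$, and the deterministic criterion is $\bar R(\bd)=\log\det\bar\bG(\bd)+2\log(2)\,\bar{\jmath}\sum_\ell d_\ell$ with $\bar{\jmath}=\sum_j w_j j$.

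The identifiability is the clean part. Because $\det M_j(\bd)=\det(\bG^0)\,2^{-2j\sum_\ell(d_\ell-d_\ell^0)}$, the linear term of $\bar R$ equals $-\sum_j w_j\log\det M_j(\bd)$ up to an additive constant, and a short computation collapses the difference to
\[
\bar R(\bd)-\bar R(\bd^0)=\log\det\Big(\sum_{j=j_0}^{j_1} w_j M_j(\bd)\Big)-\sum_{j=j_0}^{j_1} w_j\log\det M_j(\bd).
\]
Since $\log\det$ is strictly concave on positive definite matrices, Jensen's inequality makes the right-hand side nonnegative, and it vanishes only when all $M_j(\bd)$ coincide. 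As $\bG^0$ is positive definite and at least two distinct scales are used ($j_0<j_1$), equality of the diagonal scalings $\diag(2^{-j(d_\ell-d_\ell^0)})$ across $j$ (read off from the diagonal entries of the $M_j(\bd)$) forces $d_\ell=d_\ell^0$ for every $\ell$. Hence $\bd^0$ is the unique, well-separated minimiser of $\bar R$.

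It then remains to show $\sup_\bd|R(\bd)-\bar R(\bd)|\to 0$ in probability, and I would bound the two sources of error separately. The stochastic part $\hat\bG(\bd)-\E[\hat\bG(\bd)]$ has its entrywise variance controlled, through Condition~(C), by $\Var(I_{\ell,m}(j))\le C\,n_j\,2^{2j(d_\ell^0+d_m^0)}$; after the rescaling by $2^{-j(d_\ell+d_m)}$ and summation over scales (with the between-scale covariances handled via Proposition~\ref{prop:cov_croisee}), this is $O(1/n)$ for $\bd$ near $\bd^0$ and, uniformly over the compact parameter set, is killed by the condition $N^{-1/2}2^{j_0/2}\to0$ (equivalently $n_{j_0}\to\infty$). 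The deterministic bias $\E[\hat\bG(\bd)]-\bar\bG(\bd)$ is of relative order $2^{-\beta j}$ by Proposition~\ref{prop:cov_ondelettes}, hence $O(2^{-\beta j_0})\to0$ under the assumption $2^{-j_0\beta}\to0$. Combining uniform convergence with the well-separated minimum yields $\hat\bd-\bd^0=o_\P(1)$.

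The main obstacle is the uniformity in $\bd$ of the stochastic control: the weights $2^{-j(d_\ell+d_m)}$ depend on $\bd$ and can grow across scales, so one must bound a supremum of a process indexed by $\bd$ rather than a single variance, while simultaneously keeping $\hat\bG(\bd)$ bounded away from singularity uniformly so that $\log\det$ remains well behaved at the boundary of the parameter set. This uniform stochastic equicontinuity, together with the careful bookkeeping of the between-scale correlations, is the real technical work; by contrast, the concavity-of-$\log\det$ identification and the bias bounds are comparatively routine once the propositions above are in hand.
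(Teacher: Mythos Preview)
Your identifiability argument via the strict concavity of $\log\det$ is clean and correct; the paper takes a different route, using Oppenheim's inequality to minorise $R(\bd)-R(\bd^0)$ by a sum of \emph{univariate} deterministic criteria $L(\bd-\bd^0)$ (the function studied in Proposition~6 of \cite{Moulines08Whittle}) plus a stochastic remainder $\Delta(\bd,\bd^0)$. Both decompositions establish that the deterministic contrast is uniquely minimised at $\bd^0$, and yours is arguably more direct. Note, however, that your limiting criterion $\bar R$ depends on $N$ through the weights $w_j$, so the well-separation of the minimum must be shown uniformly in $N$; the paper's $L$ handles this via the cited univariate result.

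The genuine gap is in the uniform stochastic control. You assert that the stochastic error is ``killed by the condition $N^{-1/2}2^{j_0/2}\to0$'' uniformly over the compact parameter set, but this is not so. The rescaled scalogram entry at scale $j$ carries weight proportional to $2^{-j(d_\ell-d_\ell^0+d_m-d_m^0)}$; when $d_\ell+d_m<d_\ell^0+d_m^0$ these weights grow exponentially in $j$ and the coarsest scale $j_1$ dominates. A direct variance bound via Condition~(C) then yields only $O_\P(2^{-j_0\beta}+N^{-1/2}2^{j_1/2})$ uniformly over such $\bd$ (this is exactly Proposition~\ref{prop:Smu}), which requires the \emph{stronger} hypothesis $N^{-1/2}2^{j_1/2}\to0$. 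Since $j_1$ may be as large as $\log_2 N$, this is not implied by the stated assumption, and the compact set furnished by (W5) does nothing to prevent $d_\ell-d_\ell^0$ from being arbitrarily negative.

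The paper closes this gap by a separate, non-trivial localisation step: before attempting uniform control, it shows recursively (Section~\ref{sec:bound}) that $\hat d_\ell>d_\ell^0-1/2$ for every $\ell$ with probability tending to one. The argument compares $R(\bd)$ with a truncated criterion built from weights $\tilde\alpha$ that are controllable, and uses Oppenheim's inequality together with the Schur product theorem to transfer the inequality back to $R$. Once $\hat\bd$ is known to lie in $\{\bd:\,d_\ell-d_\ell^0>-1/2\ \forall\ell\}$, the exponential growth rate of the weights is strictly below~$1$ and Proposition~\ref{prop:Smu2} (with $\gamma<1$) delivers uniform control under the weaker condition $N^{-1/2}2^{j_0/2}\to0$. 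Your last paragraph correctly identifies this uniformity as the main obstacle, but your sketch does not supply the localisation; without it the argument proves consistency only under the stronger hypothesis on $j_1$ (the paper's Proposition~\ref{prop:conv1}), not Theorem~\ref{prop:convergence} as stated.
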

This result generalises that of \cite{Moulines08Whittle}. It deals with multivariate settings, with the same assumption on the wavelet filter and on the choice of the scale $j_0$. The condition in Theorem~\ref{prop:convergence} is equal to the one obtained in Proposition 9 of \cite{Moulines08Whittle}, in the univariate case, that is $1/j_0+N^{-1/2} 2^{j_0/2}\to 0$. We choose here to express the condition with the parameter $\beta$ since the optimal choice of $j_0$ depends on $\beta$ as it is shown in Corollary~\ref{prop:optimalite} below.

The convergence of $\hat\bG(\hat d)$ to $\bG^0$ is not established under assumptions of Theorem~\ref{prop:convergence}. However, we prove it in the following theorem, under more restrictive conditions.

\begin{thm}
\label{prop:vitesse}
Assume that (W1)-(W5) and Condition (C) hold. If $j_0$ and $j_1$ are chosen such that $\log(N)^2(2^{-j_0\beta}+ N^{-1/2} 2^{j_0/2})\to 0$ and $j_0 < j_1 \leq j_N$ then 
\[
\hat \bd-\bd^0=\BigO_\P(2^{-j_0\beta}+ N^{-1/2} 2^{j_0/2}),
\]
\begin{align*}
\forall(\ell,m)\in\{1,\ldots,p\}^2,\,\hat G_{\ell,m}(\hat \bd)-G_{\ell,m}(\bd^0) &= \BigO_\P(\log(N)(2^{-j_0\beta}+ N^{-1/2} 2^{j_0/2})),\\
\hat \Omega_{\ell,m}-\Omega_{\ell,m}&=\BigO_\P(\log(N)(2^{-j_0\beta}+ N^{-1/2} 2^{j_0/2})).
\end{align*}
\end{thm}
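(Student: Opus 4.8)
The plan is to treat this as a rate analysis for the M-estimator $\hat\bd=\argmin_\bd R(\bd)$, bootstrapped from the consistency already granted by Theorem~\ref{prop:convergence}. Throughout write $\delta_N:=2^{-j_0\beta}+N^{-1/2}2^{j_0/2}$ for the target rate, and abbreviate $\bar j:=\tfrac1n\sum_{j=j_0}^{j_1} j\,n_j$. First I would obtain the rate for $\hat\bd$ by a mean-value expansion of the score: since $\nabla R(\hat\bd)=0$, there is $\bar\bd$ between $\hat\bd$ and $\bd^0$ with $\hat\bd-\bd^0=-[\nabla^2 R(\bar\bd)]^{-1}\nabla R(\bd^0)$, so it suffices to prove (i) $\nabla R(\bd^0)=\BigO_\P(\delta_N)$ and (ii) that $\nabla^2 R(\bar\bd)$ is, with probability tending to one, bounded away from singularity.

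The heart of the argument is the score bound (i). Each component is $\partial R/\partial d_a(\bd^0)=\mathrm{tr}\!\big(\hat\bG(\bd^0)^{-1}\partial_a\hat\bG(\bd^0)\big)+2\log(2)\,\bar j$. Writing the normalized scalogram entries as $2^{-j(d_\ell^0+d_m^0)}I_{\ell,m}(j)=n_j\,G_{\ell,m}^0(1+\eta_{\ell,m}(j))$, Proposition~\ref{prop:cov_ondelettes} controls the mean, $\EE[\eta_{\ell,m}(j)]=\BigO(2^{-j\beta})$, while Condition~(C) controls the fluctuation, $\var(\eta_{\ell,m}(j))=\BigO(1/n_j)$. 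The additive term $2\log(2)\,\bar j$ is precisely the device that cancels the leading $-2\log(2)\,\bar j$ produced by $\mathrm{tr}(\hat\bG^{-1}\partial_a\hat\bG)$; after this cancellation the $j$-weights survive only in the centered form $\tfrac1n\sum_j (j-\bar j)\,n_j\,\eta_{\ell,m}(j)$. This centering is what prevents a spurious $\log N$: the weighted variance of $j$ under the weights $n_j\sim 2^{-j}N$ is $\BigO(1)$ (the weights decay geometrically), so the stochastic part has variance $\BigO(1/n)=\BigO(2^{j_0}/N)$ and the deterministic part is $\BigO(2^{-j_0\beta})$, yielding $\nabla R(\bd^0)=\BigO_\P(\delta_N)$. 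The off-diagonal ($j\neq j'$) contributions to this variance are shown to be of the same or smaller order using the between-scale bounds of Proposition~\ref{prop:cov_croisee}. This step, together with establishing that $\nabla^2 R(\bar\bd)$ converges in probability to a fixed positive definite information matrix (computed from the same propositions, using $\bar\bd\to\bd^0$), gives $\hat\bd-\bd^0=\BigO_\P(\delta_N)$. The reinforced hypothesis $\log(N)^2\delta_N\to0$ is consumed here: the Hessian and the second-order Taylor remainders carry $j^2$-weights of size $\BigO(\log^2 N)$, and this assumption is what makes them negligible.

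For $\hat G_{\ell,m}(\hat\bd)$ I would split $\hat G_{\ell,m}(\hat\bd)-G_{\ell,m}(\bd^0)=\big[\hat G_{\ell,m}(\hat\bd)-\hat G_{\ell,m}(\bd^0)\big]+\big[\hat G_{\ell,m}(\bd^0)-G_{\ell,m}(\bd^0)\big]$. The second bracket is $\BigO_\P(\delta_N)$ by the same bias–variance estimate as above, but applied directly to $\hat G$ (no centering occurs here). The first bracket is controlled by a mean-value expansion in $\bd$: the derivative $\partial_a\hat G_{\ell,m}$ now carries an \emph{uncentered} factor $j$, with leading value $-\log(2)\,\bar j\,G_{\ell,m}^0(\1_{a=\ell}+\1_{a=m})$ of size $\bar j=\BigO(\log N)$; multiplying by $\hat\bd-\bd^0=\BigO_\P(\delta_N)$ produces exactly the extra logarithm, so $\hat G_{\ell,m}(\hat\bd)-G_{\ell,m}(\bd^0)=\BigO_\P(\log(N)\,\delta_N)$. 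Finally $\hat\Omega_{\ell,m}$ is, through \eqref{eqn:Ohat}, a smooth function of $(\hat G_{\ell,m}(\hat\bd),\hat d_\ell,\hat d_m)$ whose denominator is bounded away from zero near $\bd^0$ in the non-degenerate phase case; a Lipschitz/delta-method argument transfers the slower of the two rates, giving $\hat\Omega_{\ell,m}-\Omega_{\ell,m}=\BigO_\P(\log(N)\,\delta_N)$.

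I expect the main obstacle to be the score control in the second paragraph: recognizing and exploiting the $j\mapsto j-\bar j$ centering so that the naive $\log N$ disappears from the rate of $\hat\bd$, and simultaneously bounding the cross-scale covariances of the scalograms via Proposition~\ref{prop:cov_croisee} so that both the diagonal and off-diagonal terms of the score variance are $\BigO(1/n)$. Establishing the strict positive-definiteness of the limiting Hessian uniformly in a neighbourhood of $\bd^0$ is the secondary delicate point.
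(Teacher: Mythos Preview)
Your route is the paper's route: a mean-value expansion of the score, the $(j-\bar j)$ centering killing the leading term at $\bd^0$, and a Hessian that converges to a fixed positive-definite limit (the paper obtains $2\log(2)^2\kappa_{j_1-j_0}(\bG^{0-1}\circ\bG^0+\bI_p)$). Your treatment of $\hat G$ and $\hat\Omega$ is likewise the same in spirit, and your identification of the uncentered $\bar j$ factor as the source of the extra $\log N$ is correct.

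There is, however, a genuine gap in step (ii). Controlling $\nabla^2 R(\bar\bd)$ at an \emph{intermediate} point $\bar\bd$ requires more than the mere consistency you invoke. The Hessian contains weights of the form $(j-\bar j)^2\,2^{-(j-\bar j)(\bar d_\ell-d_\ell^0+\bar d_m-d_m^0)}$ with $|j-\bar j|$ ranging up to order $\log_2 N$; to replace the exponential factor by $1$ (and hence to get a limit independent of $\bar\bd$) you need $\log(N)\,\|\bar\bd-\bd^0\|\to 0$ in probability, not just $\|\bar\bd-\bd^0\|\to 0$. Consistency from Theorem~\ref{prop:convergence} does not give this. The paper bridges the gap by first proving a \emph{suboptimal} rate $\hat\bd-\bd^0=\BigO_\P(\delta_N^{1/2})$ through a direct lower bound on $R(\bd)-R(\bd^0)$ (quadratic term from $L$ plus an $\BigO_\P(\delta_N)$ remainder), and only then runs the Taylor argument; the hypothesis $\log(N)^2\delta_N\to 0$ is precisely what turns $\log(N)\,\delta_N^{1/2}=(\log(N)^2\delta_N)^{1/2}$ into $o(1)$. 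Your description of where $\log(N)^2$ is ``consumed'' is therefore off target: it is not that the $j^2$-weights in the Hessian are made small, but that the intermediate-point exponentials are tamed via the preliminary rate.

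Two minor remarks. First, you do not need Proposition~\ref{prop:cov_croisee} for the score variance: Condition~(C) plus a scale-by-scale triangle inequality, as in the paper's Propositions~\ref{prop:Smu}--\ref{prop:Smu2}, already yields $\BigO_\P(\delta_N)$ without any between-scale covariance control. Second, the positive-definiteness of the limiting Hessian follows from the Schur product theorem applied to $\bG^{0-1}\circ\bG^0$, so that point is not delicate once the convergence is in hand.
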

The condition in Theorem~\ref{prop:vitesse} is slightly different from Theorem 3 of \cite{Moulines08Whittle}. Our result presents an additional $\log(N)$ term due to technical simplifications in the proof. However, it may be suppressed by adding technical details. The same arguments apply for the $\log(N)$ term appearing in the rate of the convergence of the matrix.

The optimal rate is then expressed by balancing the two terms appearing in the bound above.

\begin{cor}
\label{prop:optimalite}
Assume that (W1)-(W5) and Condition (C) hold. Taking $2^{j_0}=N^{1/(1+2\beta)}$,
\[
\hat \bd-\bd^0=\BigO_\P(N^{-\beta/(1+2\beta)}).
\]
\end{cor}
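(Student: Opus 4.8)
The plan is to derive Corollary~\ref{prop:optimalite} directly from the rate bound in Theorem~\ref{prop:vitesse} by choosing $j_0$ to balance the two competing terms $2^{-j_0\beta}$ and $N^{-1/2}2^{j_0/2}$. The corollary is not really a fresh argument but an optimisation over the free parameter $j_0$, so the work is purely elementary once Theorem~\ref{prop:vitesse} is in hand.

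First I would verify that the hypotheses of Theorem~\ref{prop:vitesse} are met for the proposed choice $2^{j_0}=N^{1/(1+2\beta)}$. The main condition is $\log(N)^2(2^{-j_0\beta}+N^{-1/2}2^{j_0/2})\to 0$. Substituting, the first term becomes $2^{-j_0\beta}=N^{-\beta/(1+2\beta)}$ and the second becomes $N^{-1/2}2^{j_0/2}=N^{-1/2}N^{1/(2(1+2\beta))}=N^{-\beta/(1+2\beta)}$, using that $-1/2+1/(2(1+2\beta))=-\beta/(1+2\beta)$. Hence both terms are of order $N^{-\beta/(1+2\beta)}$, which decays polynomially and therefore dominates the $\log(N)^2$ factor, so the condition holds. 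I would also note that $j_0<j_1\le j_N$ can be satisfied since $2^{j_0}=N^{1/(1+2\beta)}$ grows strictly slower than $N$ and therefore slower than $j_N\approx\log_2 N$ permits.

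The key step is then the balancing computation itself: the exponent $1/(1+2\beta)$ is precisely the value of $x$ (writing $2^{j_0}=N^x$) for which the two error contributions $N^{-\beta x}$ and $N^{-1/2}N^{x/2}=N^{-(1-x)/2}$ are equal, i.e.\ $\beta x=(1-x)/2$, giving $x=1/(1+2\beta)$ and common value $N^{-\beta/(1+2\beta)}$. Plugging this into the bound of Theorem~\ref{prop:vitesse} yields
\[
\hat\bd-\bd^0=\BigO_\P(2^{-j_0\beta}+N^{-1/2}2^{j_0/2})=\BigO_\P(N^{-\beta/(1+2\beta)}),
\]
which is exactly the claim.

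There is essentially no obstacle here; the only thing to keep straight is the arithmetic of the exponents and the remark that this choice of $j_0$ minimises the maximum of the two terms (any other power of $N$ makes one of them strictly larger in order). If one wished to be fully rigorous about optimality rather than merely exhibiting a good rate, I would briefly observe that the map $x\mapsto\max(\beta x,(1-x)/2)$ is minimised at the crossing point, but for the statement as given it suffices to substitute and read off the rate from Theorem~\ref{prop:vitesse}.
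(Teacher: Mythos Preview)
Your proposal is correct and matches the paper's own treatment: the corollary is stated immediately after Theorem~\ref{prop:vitesse} with only the remark that ``the optimal rate is then expressed by balancing the two terms appearing in the bound above,'' and no further proof is given. Your substitution and verification of the hypothesis $\log(N)^2(2^{-j_0\beta}+N^{-1/2}2^{j_0/2})\to 0$ are exactly what is needed, and the arithmetic is right.
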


This corresponds to the optimal rate \citep{Giraitis97rate}. Fourier Whittle estimators in \cite{Lobato99} and \cite{Shimotsu07} obtained the rate $m^{1/2}$ where $m$ is the number of discrete frequencies used in the Fourier transform. When $m\sim c N^\zeta$ with a positive constant $c$, the convergence is obtained for $0<\zeta<2\beta/(1+2\beta)$. Wavelet estimators thus give a slightly better rate of convergence.

Result of Corollary~\ref{prop:optimalite} stresses that it is necessary to fix the finest frequency $j_0$ in the wavelet procedure at a given scale depending on the regularity $\beta$ of the density $\bff^S(\cdot)$. A possible extension is to develop an estimation which is adaptive relatively to the parameter $\beta$. This is done {\it e.g.}~in univariate Fourier analysis by \cite{Iouditsky}. However, this topic exceeds the scope of this paper.

Further results on asymptotic normality, and in particular the asymptotic variance of the estimators, would give important information to quantify the quality of the estimators. In particular it would give a theoretical mean of comparison between the Fourier-based and the wavelet-based approaches or between the univariate and the multivariate estimations of $\bd$. This work is in progress and will be established in a future paper. Here, the comparison is done with a simulation study.

\section{Simulations}
\label{sec:simu}

In this section, simulated data are used to study the behaviour of the proposed procedure using one illustrative example. An extensive simulation study would exceed the scope of this paper, and will be provided in a future paper. Here, we consider an $ARFIMA(0,\bd,0)$ with a long-run correlation matrix $\bOmega=\begin{pmatrix}
1 & \rho \\ \rho & 1
\end{pmatrix}$ and $\rho =0.4$.
The proposed multivariate wavelet Whittle (MWW) estimators are computed for $N=512$ observations and 1000 Monte-Carlo replications. A R package named \textit{multiwave} is available and Matlab codes are available on request.

A set of different values of $\bd$ is considered. The choices are restricted to settings where the two components of the processes share the same order of stationarity. Indeed, it seems natural that time series measuring similar phenomena have similar stationary properties. 
We simulated both stationary and nonstationary ARFIMA processes, $d_1=0.2$ and $d_1=1.2$. Our MWW estimator is shown to be consistent and the quality increases when $|\rho|$ increases. We also conducted a comparison between our estimators and multivariate Fourier Whittle (MFW) estimators developed by \cite{Shimotsu07} for stationary processes only. In nonstationary cases, \cite{Nielsen11} proposed a similar approach based on the extended Fourier transform of \cite{Abadir}. However in our simulations, this approach gives satisfactory results only for $ \bd<1.5$. 

A wavelet-based procedure with $M$ vanishing moments should be compared with an estimation based on tapered Fourier of order $M$. Such a comparison has been driven in \cite{FayMoulinesRoueffTaqqu} in one-dimensional settings. The authors established that wavelet-based estimation outperforms tapered Fourier estimation. Similar observations are expected in multivariate framework. Yet as multivariate Whittle estimation based on tapered Fourier transform has not been studied in literature, we choose not to display such a comparison.

It is worth pointing out that the main advantage of wavelets is their flexibility. Wavelet-based estimators can be applied for a large set of data, whatever the degree of stationarity is (if still smaller than the number of vanishing moments) and even if the processes contain polynomial trends which is very attractive for real data applications.

{\bf Parameters used for estimation.}
   
The quality of estimation by wavelets relies on the choice of the wavelet bases. A trade-off is necessary between the number of vanishing moments and the support size of the wavelets. In time series analysis, the number of vanishing moments enables to consider polynomial trends or nonstationary time series, due to the constraint $\sup_\ell d_\ell\leq M$. Yet, the support size of the wavelet is proportional to the number of vanishing moments and increases the variance of estimation. 
   
The wavelet basis used in this section is the Daubechies wavelet with $M=4$ vanishing moments. Its regularization parameter is $\alpha=1.91$. In our framework, when considering stationary time series, we could also apply our procedure using Haar bases. Estimation based on Haar wavelet indeed gives better results, possibly better than Fourier (see {\it e.g.}  \cite{GencaySignori} in the case of tests of serial correlation). As explained above a lower number of vanishing moments improves the quality of the wavelet-based estimators, see~\cite{FayMoulinesRoueffTaqqu} in the univariate case. Similar results are observed in multivariate estimations. They are not presented here for the sake of concision. As our goal is to propose a flexible method for real data application, we prefer to consider a higher vanishing moments bases to stress its flexibility.

The method is controlled by the scales $j_0$ and $j_1$. The scale $j_1$ is fixed equal to $\log_2(N)$ while $j_0$ is chosen so that the optimal mean square error is minimal. Increasing $j_0$ leads to a smaller bias but a higher variance since less coefficients are used in the estimation process, which may be controlled by an adaptive procedure. As stated by Theorem~\ref{prop:convergence}, the finest scales have to be removed from estimation to get rid of the presence of the short-range dependence $\bff^S(\cdot)$. Similar considerations can be found in \cite{Achard08} and \cite{FayMoulinesRoueffTaqqu}.

Concerning MFW estimation, the main parameter is the number $m$ of frequencies used in the procedure. An usual choice in literature is $m=N^{0.65}$ (see {\it e.g.}~\cite{Shimotsu07} or \cite{Nielsen05survey}). Additionally MFW estimators are evaluated using values of $m$ giving the same number of Fourier coefficients than of wavelet coefficients. The final $m$ kept is the one giving the optimal mean square error. The parallel between the number of wavelet scales and the number of Fourier frequencies has been discussed in \cite{FayMoulinesRoueffTaqqu}.  

{\bf Measures of quality.}

The quality of the estimators is measured by the bias, the standard deviation (std) and the RMSE which is equal to the square root of $(bias^2+std^2)$. In order to display an easy comparison between the univariate and the multivariate approaches, we compute the ratio between the RMSE obtained with the multivariate wavelet Whittle estimation and the RMSE obtained with univariate wavelet Whittle estimations. It is denoted by {\it ratio M/U}. A similar quantity is defined for the comparison with MFW estimation. We define {\it ratio W/S} to be the ratio between the RMSE respectively using wavelet-based estimators and Fourier-based estimators.

\subsection[Estimation of the long-memory parameters]{Estimation of the long-memory parameters $\bd$}

Results for the estimation of $\bd$ are presented in Table~\ref{tab:dwav04}. The ratio M/U points out that the quality of estimation is increased with the multivariate approach with respect to the univariate procedure. When the series are correlated, it is better to use MWW estimators to infer the long-memory parameters. The estimation is still satisfactory in nonstationary settings. 

Table~\ref{tab:dfou04} displays the results of the MFW estimators described in \cite{Shimotsu07}. With the usual number of frequencies $m=N^{0.65}$ in Fourier-based estimation, our wavelet-based procedure leads to lower RMSEs, as quantified by the ratio W/F. More precisely the good performance of our scheme of estimation is due to a lower variance, even if the bias is higher. With a higher number of frequencies in Fourier-based estimation, taking a value that minimizes the RMSE, the MWW estimators are no more preferable to MFW. Yet, the ratio W/F stays close to 1 and the analysis of the bias and variances reveals similar orders of magnitude.

\begin{table}
\caption{Multivariate Whittle wavelet estimation of $\bd$ for a bivariate $ARFIMA(0,\bd,0)$ with $\rho=0.4$, $N=512$ with 1000 repetitions.}
\label{tab:dwav04}
\begin{center}
\begin{tabular}{p{0.5\textwidth}p{0.45\textwidth}}

\begin{tabular}{@{} l@{~~~}r@{~~~} c@{~~~}c@{~~~}c@{~~~}c@{}} 
\toprule
\multicolumn{6}{c}{$j_0=1$.}\\
 \midrule 
$d_1$ & $\bd$ & bias & std & RMSE & ratio M/U \\

\midrule \midrule

0.2  & 0.2 &  -0.0267  & 0.0413  & 0.0492 & 0.9080  \\ 
 & -0.2   &  0.0379  & 0.0430  & 0.0574 & 1.0595  \\  \hline  
 &  0.2   &  -0.0298  & 0.0428  & 0.0522 & 0.9631  \\ 
 &  0.0   &  -0.0002  & 0.0438  & 0.0438 & 0.9504  \\  \hline
 &  0.2   &  -0.0330  & 0.0456  & 0.0563 & 0.9713  \\ 
 &  0.2   &  -0.0333  & 0.0443  & 0.0554 & 0.9831  \\ \hline
 &  0.2   &  -0.0304  & 0.0429  & 0.0526 & 0.9583  \\ 
 &  0.4   &  -0.0571  & 0.0461  & 0.0734 & 0.9701  \\  
 \bottomrule

\end{tabular} &

\begin{tabular}{@{} l@{~~~}r@{~~~} c@{~~~}c@{~~~}c@{~~~}c@{}} 
\toprule
\multicolumn{6}{c}{$j_0=2$.}\\
 \midrule 
$d_1$ & $\bd$  & bias & std & RMSE & ratio M/U \\

\midrule \midrule

1.2 & 1.2  &  -0.0380  & 0.0830  & 0.0913 & 0.9728  \\ 
 & 0.8  &  -0.0298  & 0.0775  & 0.0831 & 0.9643  \\   \hline
 & 1.2  &  -0.0360  & 0.0818  & 0.0894 & 0.9702  \\ 
 & 1.0  &  -0.0346  & 0.0808  & 0.0879 & 0.9626  \\  \hline
 & 1.2  &  -0.0463  & 0.0853  & 0.0970 & 0.9677  \\ 
 & 1.2  &  -0.0393  & 0.0850  & 0.0936 & 0.9688  \\  \hline
 & 1.2  &  -0.0369  & 0.0799  & 0.0880 & 0.9589  \\ 
 & 1.4  &  -0.0482  & 0.0863  & 0.0989 & 0.9648  \\    

\bottomrule
\end{tabular}
\end{tabular}

\end{center}
\end{table}

\begin{table}
\caption{Multivariate Whittle Fourier estimation of $\bd$ for a bivariate $ARFIMA(0,\bd,0)$ with $\rho=0.4$, $N=512$ with 1000 repetitions. Two number of frequencies $m$ are presented: the usual choice $m=\lfloor N^{0.65}\rfloor$ and the value giving the lower RMSE. $\lfloor x\rfloor$ denotes the closest integer smaller than $x$.}
\label{tab:dfou04}
\begin{center}
\begin{tabular}{p{0.45\textwidth}p{0.45\textwidth}}

\begin{tabular}{@{} r@{~~~}c@{~~~}c@{~~~}c@{~~~}c@{}} 
\toprule
 \multicolumn{5}{c}{$m=\lfloor N^{0.65}\rfloor =57$.}\\
 \midrule 
  $\bd$ &  bias & std & RMSE & ratio W/F \\
\midrule
\midrule
   0.2 & -0.0087  & 0.0707  & 0.0712  & 0.6908 \\ 
  -0.2 & -0.0001  & 0.0824  & 0.0824  & 0.6958 \\  \hline
   0.2 & -0.0037  & 0.0679  & 0.0680  & 0.7674 \\ 
   0.0 & -0.0010  & 0.0778  & 0.0778  & 0.5630 \\ \hline 
   0.2 & -0.0078  & 0.0691  & 0.0695  & 0.8101 \\ 
   0.2 & -0.0043  & 0.0733  & 0.0735  & 0.7546 \\  \hline  
   0.2 & -0.0038  & 0.0705  & 0.0706  & 0.7445 \\ 
   0.4 &  0.0012  & 0.0788  & 0.0788  & 0.9320 \\ 
\bottomrule
\end{tabular} & 
\begin{tabular}{@{} r@{~~~}c@{~~~}c@{~~~}c@{~~~}c@{}} 
\toprule
 \multicolumn{5}{c}{$m=\lfloor N^{0.876}\rfloor = 236$.}\\
 \midrule
 $\bd$ &  bias & std & RMSE & ratio W/F \\
\midrule
\midrule
  0.2 & -0.0174  & 0.0318 & 0.0362 & 1.3581 \\ 
 -0.2 &  0.0158  & 0.0323 & 0.0359 & 1.5964 \\ \hline
  0.2 & -0.0170  & 0.0315 & 0.0358 & 1.4558 \\ 
  0.0 & -0.0025  & 0.0318 & 0.0319 & 1.3728 \\ \hline
  0.2 & -0.0200  & 0.0321 & 0.0378 & 1.4875 \\ 
  0.2 & -0.0189  & 0.0320 & 0.0372 & 1.4905 \\  \hline
  0.2 & -0.0201  & 0.0325 & 0.0382 & 1.3759 \\ 
  0.4 & -0.0317  & 0.0366 & 0.0484 & 1.5169 \\               
\bottomrule
\end{tabular}
\end{tabular}
\end{center}
\end{table}

\subsection[Estimation of the long-run covariance]{Estimation of the long-run covariance $\Omega$}

This section deals with the estimation of the long-run covariance matrix $\bOmega$ and the estimation of the correlation $\Omega_{12}/\sqrt{\Omega_{11}\Omega_{22}}.$  This latter quantity corresponds in literature to the power-law coherency between the two time series \citep{SelaHurvich2012} or to the fractal connectivity \citep{Achard08}. 

The results obtained in simulations for MWW estimation of the covariance and correlation are given in Table~\ref{tab:Owav04}. The quality is satisfactory in all settings, especially in the stationary ones. 

The results for MFW estimation are displayed in Table~\ref{tab:Ofou04}. When MFW is applied with $m=N^{0.65}$ frequencies, the ratio W/F is less than 1. Like for the estimation of $\bd$ the good performance of MWW estimators is principally due to a smaller variance. When MFW estimators are implemented with a higher number of frequencies, giving optimal results for the estimation of $\bd$, the difference between MWW and MFW procedures decreases. The quality of the two estimation schemes are similar, with comparable values for bias and variances.

\begin{table}
\caption{Wavelet Whittle estimation of $\bOmega$ for a bivariate $ARFIMA(0,\bd,0)$ with $\rho=0.4$, $N=512$ with 1000 repetitions.}
\label{tab:Owav04}
\begin{center}
\begin{tabular}{p{0.5\textwidth}p{0.45\textwidth}}

\begin{tabular}{@{} l@{~~~}c@{~~~}c@{~~~}c@{~~~}c@{~~~}c@{}} 
\toprule
\multicolumn{5}{c}{$j_0=1$.}\\
 \midrule 
 $\bd$ &   & bias & std & RMSE \\
\midrule \midrule
 (0.2,-0.2)& $\Omega_{1,1}$  &   0.0342  & 0.0710  & 0.0788 \\ 
          & $\Omega_{1,2}$    &   0.0387  & 0.0605  & 0.0718 \\ 
          & $\Omega_{2,2}$   &  -0.0402  & 0.0709  & 0.0815 \\ 
           & correlation       &   0.0400  & 0.0496  & 0.0637 \\  \hline
 (0.2,0.0) & $\Omega_{1,1}$  &   0.0309  & 0.0697  & 0.0762 \\ 
          & $\Omega_{1,2}$    &   0.0176  & 0.0540  & 0.0568 \\ 
           & $\Omega_{2,2}$    &  -0.0012  & 0.0732  & 0.0733 \\ 
           & correlation       &   0.0113  & 0.0417  & 0.0432 \\ \hline
(0.2,0.2) & $\Omega_{1,1}$  &   0.0297  & 0.0733  & 0.0790 \\ 
           & $\Omega_{1,2}$    &   0.0116  & 0.0518  & 0.0530 \\ 
           & $\Omega_{2,2}$    &   0.0282  & 0.0725  & 0.0778 \\ 
          & correlation      &  -0.0003  & 0.0386  & 0.0386 \\  \hline
 (0.2,0.4) & $\Omega_{1,1}$  &  0.0356  & 0.0703  & 0.0788 \\ 
          & $\Omega_{1,2}$   &  0.0328  & 0.0568  & 0.0655 \\ 
           & $\Omega_{2,2}$    &  0.0707  & 0.0728  & 0.1015 \\ 
          & correlation      &  0.0106  & 0.0422  & 0.0435 \\ 
\bottomrule
\end{tabular} &
\begin{tabular}{ l@{~~~}c@{~~~}c@{~~~}c@{~~~}c@{}}
\toprule
\multicolumn{5}{c}{$j_0=2$.}\\
 \midrule 
 $\bd$ &  & bias & std & RMSE \\
\midrule
\midrule
 (1.2,0.8) & $\Omega_{1,1}$  &  0.0037  & 0.1473  & 0.1474 \\ 
           & $\Omega_{1,2}$    &  0.0478  & 0.1199  & 0.1290 \\ 
           & $\Omega_{2,2}$    &  0.0052  & 0.1303  & 0.1304 \\ 
          & correlation      &  0.0462  & 0.1041  & 0.1139 \\  \hline
(1.2,1.0) & $\Omega_{1,1}$  &  -0.0031  & 0.1411  & 0.1411 \\ 
          & $\Omega_{1,2}$    &  0.0182  & 0.1003  & 0.1019 \\ 
          & $\Omega_{2,2}$    &  0.0027  & 0.1357  & 0.1357 \\ 
          & correlation       &  0.0176  & 0.0781  & 0.0800 \\  \hline
 (1.2,1.2) & $\Omega_{1,1}$  &  0.0055  & 0.1442  & 0.1443 \\ 
            & $\Omega_{1,2}$    &  0.0060  & 0.0921  & 0.0923 \\ 
           & $\Omega_{2,2}$    &  -0.0033  & 0.1456  & 0.1456 \\ 
           & correlation      &  0.0052  & 0.0685  & 0.0687 \\  \hline
(1.2,1.4) & $\Omega_{1,1}$  &  0.0001  & 0.1496  & 0.1496 \\ 
           & $\Omega_{1,2}$    &  0.0155  & 0.1039  & 0.1051 \\ 
           & $\Omega_{2,2}$    &  0.0135  & 0.1610  & 0.1615 \\ 
           & correlation      &  0.0125  & 0.0802  & 0.0812 \\ 
\bottomrule
\end{tabular}
\end{tabular}
\end{center}
\end{table}

\begin{table}
\caption{Fourier Whittle estimation of $\bOmega$ for a bivariate $ARFIMA(0,\bd,0)$ with $\rho=0.4$, $N=512$ with 1000 repetitions. Two number of frequencies $m$ are presented: the usual choice $m=\lfloor N^{0.65}\rfloor$ and the value giving the lower RMSE.}
\label{tab:Ofou04}
\begin{center}
\begin{tabular}{ll}

\begin{tabular}{@{} l@{~~~}c@{~~~}c@{~~~}c@{~~~}c@{~~~}c@{}} 
\toprule
 \multicolumn{6}{c}{$m=\lfloor N^{0.65}\rfloor = 57$.}\\
 \midrule
 $\bd$ &  & bias & std & RMSE & ratio W/F \\
\midrule \midrule

  (0.2,-0.2)& $\Omega_{1,1}$  &  0.0394  & 0.2253   & 0.2287  & 0.3444  \\ 
            & $\Omega_{1,2}$  &  0.0091  & 0.1156   & 0.1160  & 0.6189  \\ 
            & $\Omega_{2,2}$  &  0.0145  & 0.2308   & 0.2313  & 0.3525  \\ 
            & correlation     & -0.0002  & 0.0774   & 0.0774  & 0.8229  \\ \hline

  (0.2,0.0) & $\Omega_{1,1}$  & 0.0245  & 0.2245   & 0.2259  & 0.3373 \\ 
            & $\Omega_{1,2}$  & 0.0124  & 0.1154   & 0.1161  & 0.4892 \\ 
            & $\Omega_{2,2}$  & 0.0163  & 0.2341   & 0.2347  & 0.3121 \\ 
            & correlation     & 0.0061  & 0.0793   & 0.0795  & 0.5428 \\  \hline

  (0.2,0.2) & $\Omega_{1,1}$  & 0.0319  & 0.2319   & 0.2341  & 0.3376 \\ 
            & $\Omega_{1,2}$  & 0.0141  & 0.1191   & 0.1199  & 0.4423 \\ 
            & $\Omega_{2,2}$  & 0.0236  & 0.2331   & 0.2343  & 0.3321  \\ 
            & correlation     & 0.0041  & 0.0781   & 0.0782  & 0.4935  \\ \hline

  (0.2,0.4) & $\Omega_{1,1}$  & 0.0264  & 0.2255   & 0.2271  & 0.3470 \\ 
            & $\Omega_{1,2}$  & 0.0107  & 0.1232   & 0.1237  & 0.5298 \\ 
            & $\Omega_{2,2}$  & 0.0276  & 0.2462   & 0.2478  & 0.4096  \\ 
            & correlation     & 0.0001  & 0.0783   & 0.0783  & 0.5548  \\
            \bottomrule
\end{tabular} &  
\begin{tabular}{c@{~~~}c@{~~~}c@{~~~}c@{~~~}c@{}} 
\toprule
 \multicolumn{4}{c}{$m=\lfloor N^{0.876} \rfloor = 236$.}\\
 \midrule
   bias & std & RMSE & ratio W/F \\
\midrule \midrule 

    0.0492  & 0.0679   & 0.0839  & 0.9395  \\ 
    0.0009  & 0.0498   & 0.0498  & 1.4414  \\ 
   -0.0470  & 0.0640   & 0.0794  & 1.0273  \\ 
    0.0006  & 0.0387   & 0.0387  & 1.6464  \\ \hline

    0.0449  & 0.0666   & 0.0803  & 0.9486 \\ 
    0.0105  & 0.0506   & 0.0517  & 1.0985 \\ 
    -0.0008  & 0.0677   & 0.0677  & 1.0819 \\ 
      0.0014  & 0.0383   & 0.0383  & 1.1259 \\  \hline

     0.0450  & 0.0708   & 0.0839  & 0.9417 \\ 
     0.0176  & 0.0520   & 0.0549  & 0.9666 \\ 
      0.0438  & 0.0690   & 0.0818  & 0.9517  \\ 
   -0.0006  & 0.0382   & 0.0382  & 1.0099  \\ \hline

   0.0489  & 0.0682   & 0.0839  & 0.9392 \\ 
    0.0313  & 0.0531   & 0.0616  & 1.0632 \\ 
    0.1052  & 0.0705   & 0.1267  & 0.8012  \\ 
    0.0002  & 0.0384   & 0.0384  & 1.1307  \\

\bottomrule
\end{tabular}
\end{tabular}
\end{center}
\end{table}

To conclude, the multivariate approach increases the quality of estimation of the long-memory parameters $\bd$ in comparison with a univariate estimation. In stationary frameworks, the performance is very similar to multivariate Fourier Whittle estimation, when estimating the vector $\bd$ or the long-run covariance matrix. The main advantage of our wavelet-based procedure is then its flexibility. By contrast with Fourier-based estimation, our estimators can be applied in a larger scope of situations, with nonstationary processes or in the presence of polynomial trends in the time series.

\section{Application on neuroscience data}

\label{sec:real}

We apply our approach to neuroscience data where the recorded data are typical example of multivariate long-range dependent time series. Researchers are interested in characterising the brain connectivity. Usually, the connectivity is evaluated using correlations at different frequencies between time series measuring the brain activity. We will show in this section that our method is perfectly adequate to deal with these real data. 

The study concerns MEG data acquired from a healthy 43 year old woman studied during rest with eyes open at the National Institute of Mental Health Bethesda, MD using a 274-channel CTF MEG system VSM MedTech, Coquitlam,
BC, Canada operating at 600 Hz. The data were previously used in \cite{Achard08}. We consider $N=2^{15}$ time points for each of the 274 time series.

Figure~\ref{fig:meg.ex} displays the time series for arbitrary four channels. It is clear that they present nonlinear trends. Consequently, Fourier methods are not adequate to analyse such data, and methods based on wavelets are better to use.

\begin{figure}[!ht]
\caption{MEG recordings for 4 arbitrary channels.}
\label{fig:meg.ex}
\begin{center}
\includegraphics[width=10cm,height=5cm]{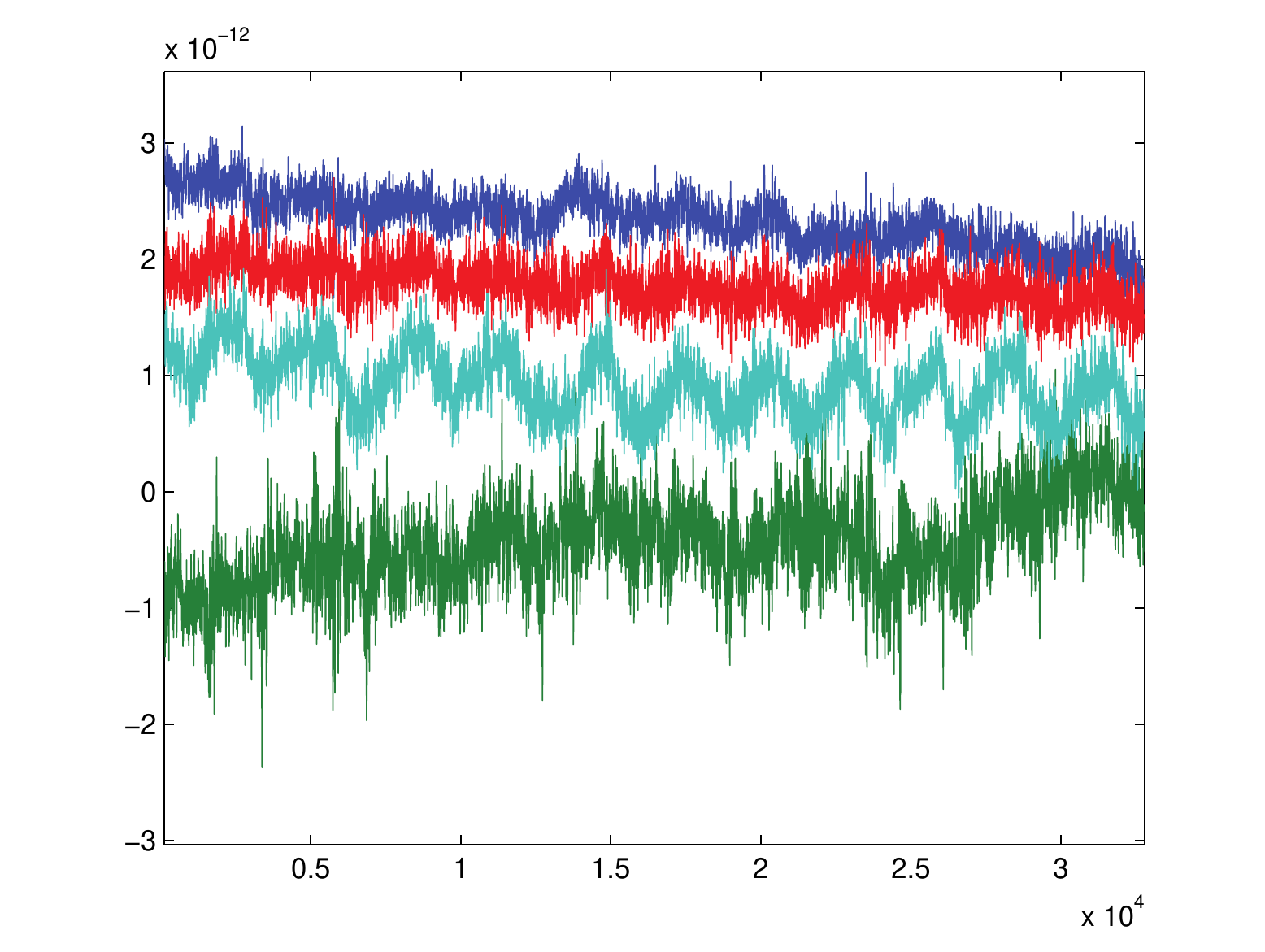}
\end{center}
\end{figure}

Our procedure was applied using scales 4 to 8. It corresponds to frequencies between 1 to 20 Hz. This choice was motivated by discussions with neuroscientists. It takes into account the presence of high-frequency noise which is modelled by $\bff^S(\cdot)$. The data were preprocessed, and the low-frequencies were removed. Figure~\ref{fig:meg.result} presents the results of the estimation of the long-memory parameters $\bd$ and of the long-run covariance matrix $\bOmega$.

\begin{figure}[!ht]
\caption{Results obtained by MWW estimators on the MEG dataset: histogram of the estimated long-memory parameters $\bd$ (a) and estimated fractal connectivity matrix (b).}
\label{fig:meg.result}
\begin{center}
\subfigure[]{\includegraphics[scale=0.4]{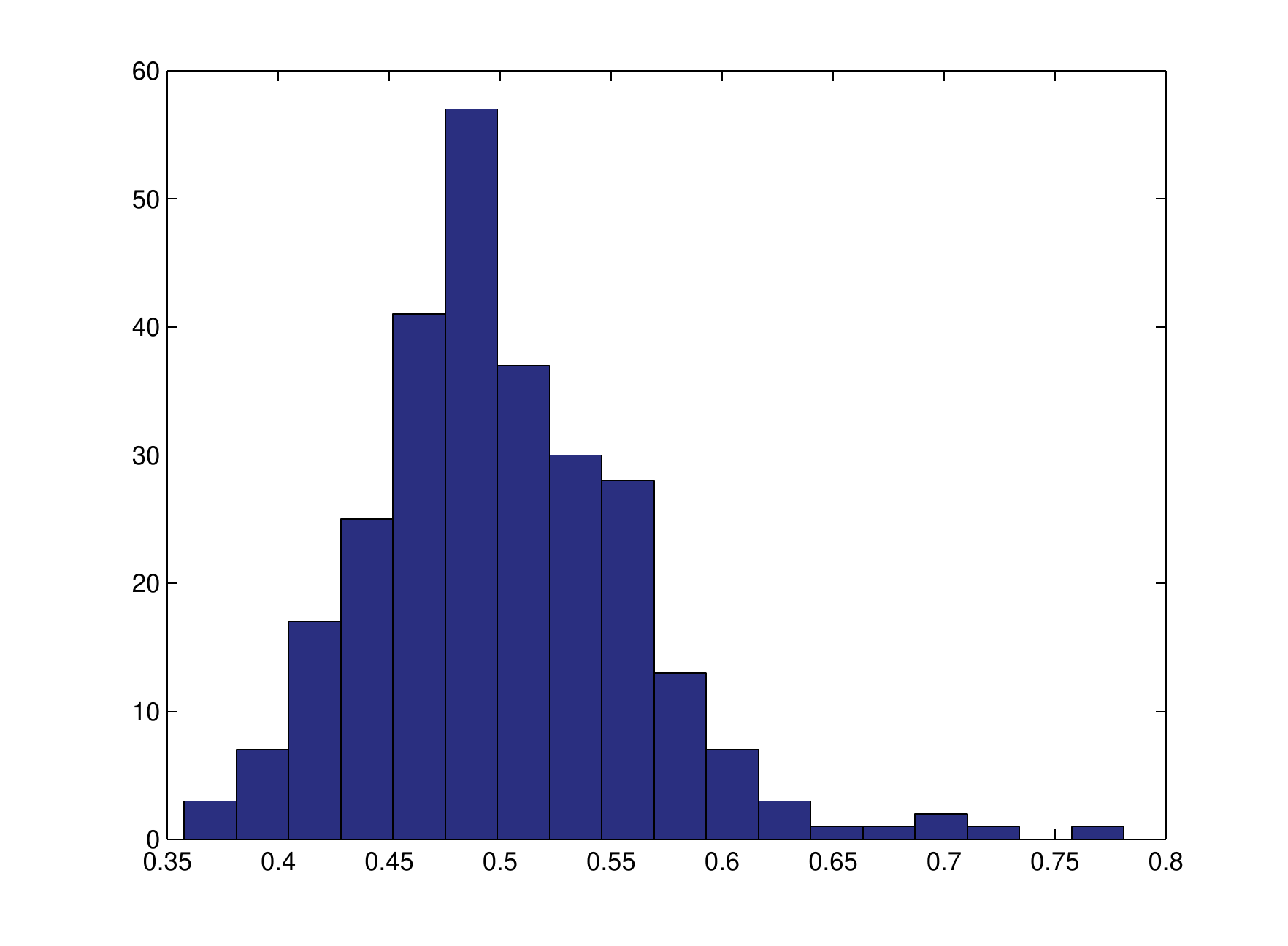}}
\subfigure[]{\includegraphics[scale=0.4]{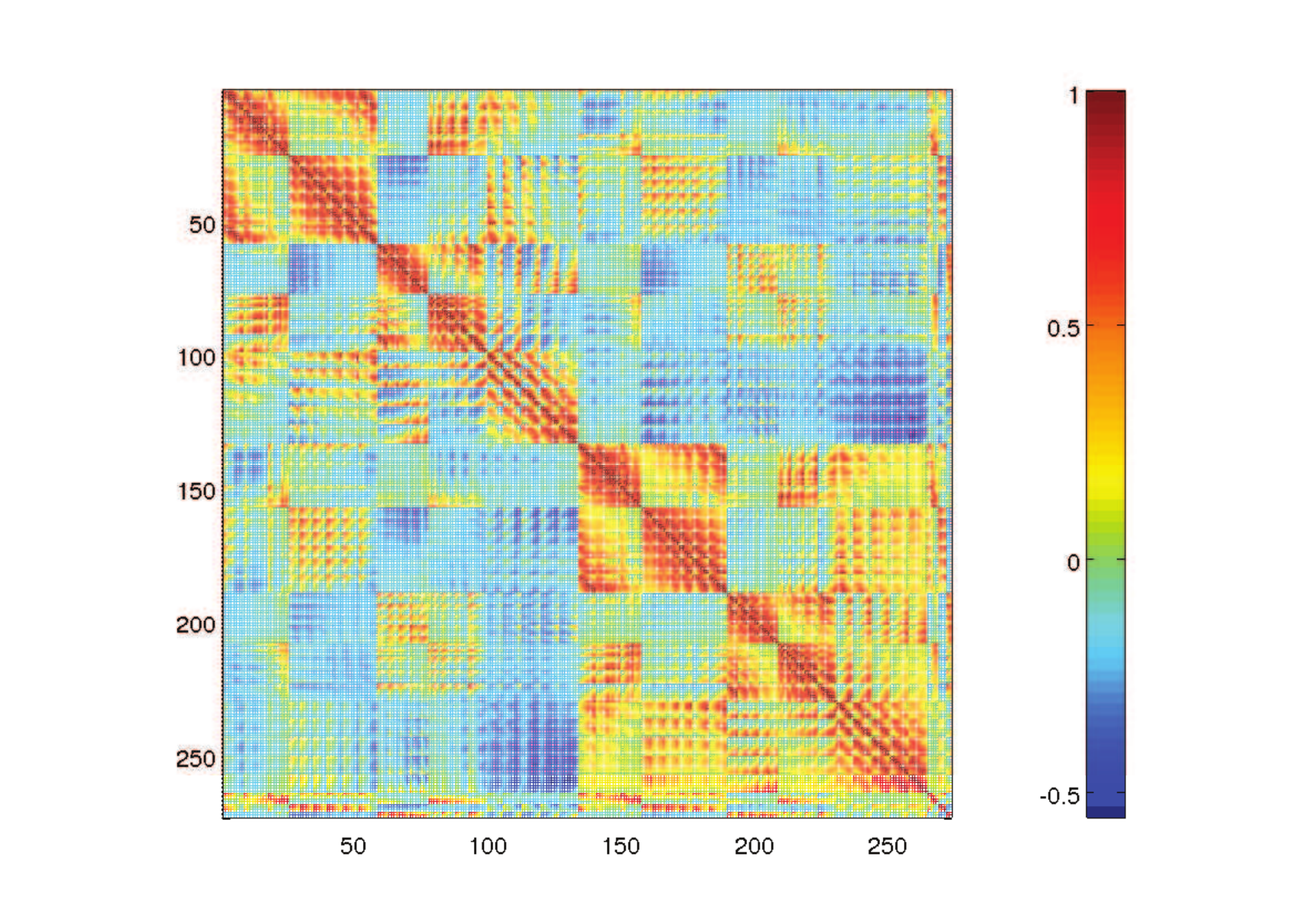}}
\end{center}
\end{figure}

First, the histogram of the estimate $\hat \bd$ shows that the maximal difference between the values of the long-memory parameters is less than 0.5, and the problem of identifiability of $\bOmega$ does not occur with these data. This allows us to give an estimate of the fractal connectivity. It is worth noticing that clusters appear in the correlation matrix. Most of them are situated along the diagonal, corresponding to spatially closed channels. Some channels are still correlated, even far from each others. It would be interesting to relate this result to a neuroscience interpretation. This will be investigated in future work.

\section*{Conclusion}

Many application fields are concerned with high-dimensional time series. A challenge is to characterize their long-memory properties and their correlation structure. The present work consider a semiparametric multivariate model, including a large class of multivariate processes such as some fractionally integrated processes. We propose an estimation of the long-dependence parameters and of the fractal connectivity, based on the Whittle approximation and on a wavelet representation of the time series. The theoretical properties of the estimation show the asymptotic optimality. A simulation study confirms that the estimation is well-behaved on finite samples. Finally we propose an application to the estimation of a human brain functional network based on MEG data sets. Our study highlights the benefit of the multivariate analysis, namely improved efficiency of estimation of dependence parameters and estimation of long-term correlations. Future work may concern the asymptotic normality of the estimators, since the development of tests may present a significant benefit for real data applications.

\vspace{10pt}

{\bf Acknowledgements.} The authors thank the Co-Editor and two anonymous referees for their comments that
led to substantial improvements in the paper. This work was partly supported by the IXXI research institute and the ANR project GRAPHSIP ANR-14-CE27-0001. This work has been done during the provisional assignment of I. Gannaz within CNRS at GIPSA-lab, Grenoble. The authors are grateful to Shimotsu (\url{http://shimotsu.web.fc2.com/Site/Matlab\_Codes.html}) and to Fa\"y, Moulines, Roueff and Taqqu for kindly providing the codes of their respective papers.

\begin{appendices}

\section{Proof of Propositions~\ref{prop:cov_ondelettes0},~\ref{prop:cov_ondelettes} and \ref{prop:cov_croisee}}

This section deals with the proof of Propositions~\ref{prop:cov_ondelettes0} and \ref{prop:cov_ondelettes}. The proof of Proposition~\ref{prop:cov_croisee} is based on similar arguments and is omitted.

The covariance between $W_{j,k}(\ell)$ and $W_{j,k}(m)$ can be written with the cospectrum, $\theta_{\ell,m}(j)=\int_\R {Re(f_{\ell,m}(\lambda))|\H_j(\lambda)|^2\,d\lambda }$. Indeed as the cross-spectral density is Hermitian, its imaginary part is an odd function,
$$\theta_{\ell,m}(j)=\Omega_{\ell,m}\int_\R {|2\sin({\lambda}/{2})|^{-(d_\ell+d_m)}\cos((\pi \text{sign}(\lambda)-\lambda)(d_\ell-d_m)/2)f^S_{\ell,m}(\lambda)|\H_j(\lambda)|^2\,d\lambda }.$$
The sinus function being odd,
$$\theta_{\ell,m}(j)=\Omega_{\ell,m}\cos(\pi(d_\ell-d_m)/2)\int_\R {|2\sin({\lambda}/{2})|^{-(d_\ell+d_m)}\cos(\lambda(d_\ell-d_m)/2)f^S_{\ell,m}(\lambda)|\H_j(\lambda)|^2\,d\lambda }.$$

The proof is very similar to Theorem 1 of \cite{Moulines07SpectralDensity}. Define the quantities $A_{\ell,m}(j)$ and $R_{\ell,m}(j)$,
\begin{align*}
A_{\ell,m}(j)&= \Omega_{\ell,m} 2^{j} \cos(\pi(d_\ell-d_m)/2) \int_{-\pi}^{\pi}{|2\sin({\lambda}/{2})|^{-(d_\ell+d_m)}\cos(\lambda(d_\ell-d_m)/2)} \\
&  \mbox{}\hspace{9cm}  {f_{\ell,m}^S(\lambda)|\hat\phi(\lambda)\hat\psi(2^{j}\lambda)|^2\,d\lambda}\\
R_{\ell,m}(j)&=\theta_{\ell,m}(j)-A_{\ell,m}(j)
\end{align*}
Following the proof of \cite{Moulines07SpectralDensity}, we can rewrite $A_{\ell,m}(j)$,
\begin{multline*}A_{\ell,m}(j) = \Omega_{\ell,m}2^{j} \cos(\pi(d_\ell-d_m)/2)\\
\int_{-\pi}^{\pi}{g_{\ell,m}(\lambda)|\lambda|^{-(d_\ell+d_m)}\cos(\lambda(d_\ell-d_m)/2)f_{\ell,m}^S(\lambda)|\hat\phi(\lambda)|^2|\hat\psi(2^{j}\lambda)|^2\,d\lambda}
\end{multline*}
\begin{equation*}
\text{with~} g_{\ell,m}(\lambda)=\left|\frac{2\sin({\lambda}/{2})}{\lambda}\right|^{-(d_\ell+d_m)}\text{~for all~}\lambda\in(-\pi,\pi).
\end{equation*}
\begin{itemize}
\item The assumption $\bff^S(\cdot)\in\mathcal H(\beta,L)$ states that $\left|{f_{\ell,m}^S(\lambda)}-1\right|\leq L |\lambda|^\beta$ for all $\lambda\in(-\pi,\pi)$.
\item Under assumption (W1) the function $|\hat\phi(\cdot)|^2$ is infinitely differentiable and bounded on $(-\pi,\pi)$.
\item Using a Taylor expansion, the function $\bg(\cdot)$ belongs to $\mathcal H(2,L_g)$ with $L_g=\sup_{\ell,m=1,\dots,p}\sup_{\lambda\in(-\pi,\pi)} |g_{\ell,m}"(\lambda)|$ where $\bg"(\cdot)$ denotes the second derivative of $\bg(\cdot)$.
\end{itemize}
This implies that there exists a constant $C_{\phi,d}$ depending on $\phi(\cdot)$ and $\bd$ such that
\begin{multline*}
\left|A_{\ell,m}(j) - \Omega_{\ell,m}2^{j} \cos({\pi(d_\ell-d_m)/2})\int_{-\pi}^{\pi}{|\lambda|^{-(d_\ell+d_m)}\cos({\lambda(d_\ell-d_m)/2})|\hat\psi(2^{j}\lambda)|^2\,d\lambda}\right|\\
\leq C_{\phi,d} L 2^{j}\int_{-\pi}^{\pi}|\lambda|^{(\beta-d_\ell-d_m)}|\hat\psi(2^{j}\lambda)|^2d\lambda.
\end{multline*}
With a change of variable,
\begin{multline*}
\left|A_{\ell,m}(j) - \Omega_{\ell,m}2^{j(d_\ell+d_m)} \cos(\pi(d_\ell-d_m)/2)\int_{-2^j\pi}^{2^j\pi}{|\lambda|^{-(d_\ell+d_m)}\cos(2^{-j}{\lambda(d_\ell-d_m)/2})|\hat\psi(\lambda)|^2\,d\lambda}\right|\\
\leq C_{\phi,d}L 2^{j(d_\ell+d_m-\beta)}\int_{-2^j\pi}^{-2^j\pi}|\lambda|^{(\beta-d_\ell-d_m)}|\hat\psi(\lambda)|^2d\lambda.
\end{multline*}  
Under assumptions (W2), there exists a positive constant $C_\psi$ such that $\int_{-\pi}^{\pi}|\lambda|^{(\beta-d_\ell-d_m)}|\hat\psi(\lambda)|^2d\lambda\leq C_\psi \int_{-\infty}^{\infty}|\lambda|^{(\beta+2\alpha)-d_\ell-d_m}d\lambda$. (W5) states that $(\beta+2\alpha)-d_\ell-d_m<1$, the right-hand side of the inequality is bounded by a constant depending on $\psi(\cdot)$, $\beta$ and $\min_{i=1,\dots,p} d_i$. Using assumptions (W2) and (W5), we also have $\left|\int_{|\lambda|>2^j\pi}{|\lambda|^{-(d_\ell+d_m)}\cos(2^{-j}{\lambda(d_\ell-d_m)/2})|\hat\psi(\lambda)|^2\,d\lambda}\right|\leq C_\psi \int_{|\lambda|>2^j\pi}{|\lambda|^{-(1+\beta)}\,d\lambda}.$ The right-hand side is bounded by a constant depending on $\psi$ and $\beta$. 
We obtain that there exists a constant $C_0$ depending on $\alpha$, $\beta$, $\phi(\cdot)$, $\psi(\cdot)$, $\min_{i=1,\dots,p} d_i$ and $\Omega_{\ell,m}$ such that
\begin{equation*}
\left|A_{\ell,m}(j) - \Omega_{\ell,m}2^{j(d_\ell+d_m)} \cos({\pi(d_\ell-d_m)/2})K_j(d_\ell+d_m)\right|
\leq C_0 L 2^{j(d_\ell+d_m-\beta)},
\end{equation*}
with $K_j(d_\ell,d_m)=\int_{-\infty}^\infty{|\lambda|^{-(d_\ell+d_m)}\cos(2^{-j}{\lambda(d_\ell-d_m)/2})|\hat\psi(\lambda)|^2\,d\lambda}$.

On the other hand, we can consider a first order approximation. 
Let 
\begin{equation*}A_{\ell,m}(j) = \Omega_{\ell,m}2^{j} \cos(\pi(d_\ell-d_m)/2)\\
\int_{-\pi}^{\pi}{g_{\ell,m}(\lambda)|\lambda|^{-(d_\ell+d_m)}f_{\ell,m}^S(\lambda)|\hat\phi(\lambda)|^2|\hat\psi(2^{j}\lambda)|^2\,d\lambda}
\end{equation*}
\begin{equation*}
\text{with now~} g_{\ell,m}(\lambda)=\left|\frac{2\sin({\lambda}/{2})}{\lambda}\right|^{-(d_\ell+d_m)}\cos(\lambda(d_\ell-d_m)/2)\text{~for all~}\lambda\in(-\pi,\pi).
\end{equation*}
Then a similar approximation is obtained,
\begin{multline*}
\left|A_{\ell,m}(j) - \Omega_{\ell,m}2^{j(d_\ell+d_m)} \cos(\pi(d_\ell-d_m)/2)\int_{-2^j\pi}^{2^j\pi}{|\lambda|^{-(d_\ell+d_m)}|\hat\psi(\lambda)|^2\,d\lambda}\right|\\
\leq C_{\phi,d} L 2^{j(d_\ell+d_m-\beta)}\int_{-2^j\pi}^{2^j\pi}|\lambda|^{(\beta-d_\ell-d_m)}|\hat\psi(\lambda)|^2d\lambda.
\end{multline*}
Using assumptions (W2) and (W5),
\begin{equation*}
\left|A_{\ell,m}(j) - \Omega_{\ell,m}2^{j(d_\ell+d_m)} \cos(\pi(d_\ell-d_m)/2)K(\delta)\right|\leq CL 2^{j(d_\ell+d_m-\beta)},
\end{equation*}
with $K(\delta)=\int_{-\infty}^{\infty}{|\lambda|^{-(\delta)}|\hat\psi(\lambda)|^2\,d\lambda}$. 

Finally, $R_{\ell,m}(j)$ is bounded by $R_{\ell,m}(j)\leq C L 2^{(d_\ell+d_m-\beta)j}$. This inequality follows from the approximation of the squared gain function of the wavelet filter given in Proposition~3 of \cite{Moulines07SpectralDensity} and from similar arguments to those given for $A_{\ell,m}(j)$. We do not detail the proof here for the sake of concision and we refer to the proof of Theorem~1 in \cite{Moulines07SpectralDensity}.

\section{Proof of Proposition~\ref{prop:condition}}
\label{proof:conditionC}

Since the wavelet $\psi$ admits $M$ vanishing moments, at each scale $j\geq 0$, the associated filter $\mathbb{H}_j$ is factorised as $\mathbb{H}_j(\lambda)=(1-e^{i\lambda})^M \tilde{\mathbb{H}}_j(\lambda)$, with $\tilde{\mathbb{H}}_j$ trigonometric polynomial, $\tilde{\mathbb{H}}_j(\lambda)=\sum_{t\in\Z}\tilde h_{j,t}e^{it\lambda}$.

Since $M\geq  D$, the wavelet coefficients may be written as $$W_{j,k}(\ell)=\sum_{t\in\Z} \tilde h_{j,2^jk-t}(\Delta^{D} X_\ell)(t)=\sum_{t\in\Z} \bB_\ell(j,2^jk-t)\bepsilon(t),$$
where $\bB_\ell(j,2^jk-t)= \tilde h_{j,2^jk-t}(\Delta^{M-D}\bA_\ell)(t)$. For all $\ell=1,\ldots,p$, the sequence $\{\bB_\ell(j,u)\}_{u\in\Z}$ belongs to $\ell^2(\Z)$.

We first give a preliminary result on the second order moment of $W_{j,k}(\ell)$,
$$
\E[W_{j,k}(\ell)^2]
 =  \sum_{t,t'\in\Z} \sum_{a,b=1,\dots p} B_{\ell,a}(j,2^j k-t)B_{\ell,b}(j,2^j k-t') \E[\epsilon_a(t)\epsilon_b(t')].
$$
Using the second-order properties of the process $\bepsilon$, the variance is equal to  
\begin{equation}\label{eqn:ordre2}
\E[W_{j,k}(\ell)^2]= \sum_{t\in\Z} \sum_{a=1,\dots p} B_{\ell,a}(j,2^j k-t)^2.
\end{equation}

Consider now $\E[I_{\ell,m}(j)^2]$,
\begin{align*}
\E[I_{\ell,m}(j)^2]&=\E\left[\left(\sum_k W_{j,k}(\ell)W_{j,k}(m)\right)^2\right]\\
&= \sum_{k,k'} \sum_{t,t',t",t"'\in\Z} \sum_{a,b,c,d=1,\dots p} B_{\ell,a}(j,2^j k-t)B_{m,b}(j,2^j k-t')B_{\ell,c}(j,2^j k'-t")B_{m,d}(j,2^j k'-t''')\\
&  \hspace{8cm} \E[\epsilon_a(t)\epsilon_b(t')\epsilon_c(t")\epsilon_d(t''')].
\end{align*}
The fourth order behaviour of $\bepsilon$ implies that 
\begin{align*}
\E[I_{\ell,m}(j)^2] =& \sum_{k,k'} \sum_{t\in\Z} \sum_{a,b,c,d=1,\dots p} \mu_{a,b,c,d} B_{\ell,a}(j,2^j k-t)B_{m,b}(j,2^j k+t)B_{\ell,c}(j,2^j k'-t)B_{m,d}(j,2^j k'-t)\\
& + \sum_{k,k'} \sum_{t\neq t'} \sum_{a,b=1,\dots p}  B_{\ell,a}(j,2^j k-t)B_{m,a}(j,2^j k-t)B_{\ell,b}(j,2^j k'-t')B_{m,b}(j,2^j k'-t')\\
&  +\sum_{k,k'} \sum_{t\neq t'} \sum_{a,b=1,\dots p}  B_{\ell,a}(j,2^j k-t)B_{m,b}(j,2^j k-t')B_{\ell,a}(j,2^j k'-t)B_{m,b}(j,2^j k'-t')\\
&  +\sum_{k,k'} \sum_{t\neq t'} \sum_{a,b=1,\dots p}  B_{\ell,a}(j,2^j k-t)B_{m,b}(j,2^j k-t')B_{\ell,b}(j,2^j k'-t')B_{m,a}(j,2^j k'-t).
\end{align*}
As $\E[I_{\ell,m}(j)]^2=\sum_{k,k'}\sum_{t,t'}\sum_{a,b} B_{\ell,a}(j,2^j k-t)B_{m,a}(j,2^j k-t)B_{\ell,b}(j,2^j k'-t')B_{m,b}(j,2^j k'-t')$, the variance of the scalogramm satisfies,
\begin{align*}
\lefteqn{Var(I_{\ell,m}(j))}\\
=& \sum_{k,k'} \sum_{t\in\Z} \sum_{a,b,c,d=1,\dots p} \mu_{a,b,c,d} B_{\ell,a}(j,2^j k-t)B_{m,b}(j,2^j k-t)B_{\ell,c}(j,2^j k'-t)B_{m,d}(j,2^j k'-t)\\
 & +\sum_{k,k'} \E[W_{j,k}(\ell)W_{j,k'}(\ell)]\E[W_{j,k}(m)W_{j,k'}(m)] + \sum_{k,k'} \E[W_{j,k}(\ell)W_{j,k'}(m)]\E[W_{j,k}(m)W_{j,k'}(\ell)]\\
 &- \sum_{k,k'} \sum_{t} \sum_{a,b=1,\dots p}  B_{\ell,a}(j,2^j k-t)B_{m,a}(j,2^j k-t)B_{\ell,b}(j,2^j k'-t)B_{m,b}(j,2^j k'-t)\\
 &- \sum_{k,k'} \sum_{t} \sum_{a,b=1,\dots p}  B_{\ell,a}(j,2^j k-t)B_{m,b}(j,2^j k-t)B_{\ell,a}(j,2^j k'-t)B_{m,b}(j,2^j k'-t)\\
 & -\sum_{k,k'} \sum_{t} \sum_{a,b=1,\dots p}  B_{\ell,a}(j,2^j k-t)B_{m,b}(j,2^j k-t)B_{\ell,b}(j,2^j k'-t)B_{m,a}(j,2^j k'-t).
\end{align*}
Finally, $Var(I_{\ell,m}(j))\leq  V_1+V_2+V_3$ with
\begin{align*}
V_1&=  |\sum_{k, k'} \E[W_{j,k}(\ell)W_{j,k'}(\ell)]\E[W_{j,k}(m)W_{j,k'}(m)]|,\\ 
V_2 &=  | \sum_{k, k'} \E[W_{j,k}(\ell)W_{j,k'}(m)]\E[W_{j,k}(m)W_{j,k'}(\ell)]|,\\
V_3&= (1+\mu_\infty)
\sum_{k, k'} \sum_{t\in\Z} \sum_{a,b,c,d=1,\dots p}  |B_{\ell,a}(j,2^j k-t)B_{m,b}(j,2^j k-t)B_{\ell,c}(j,2^j k'-t)B_{m,d}(j,2^j k'-t)|.
\end{align*}

\subsection*{Bounds $V_1$ and $V_2$}

Proposition~\ref{prop:cov_croisee} states that $Cov(W_{j,k}(\ell),W_{j,k'}(\ell))=\int_{-\pi}^\pi e^{i\lambda (k-k')}D_{0;0}^{(j)}(\lambda;(\ell,\ell))d\lambda.$
The quantity $\int_{-\pi}^\pi e^{i\lambda v }D_{0;0}^{(j)}(\lambda;(\ell,\ell))d\lambda$ is the $v$-th Fourier coefficient of the function $D_{0,0}^{(j)}(\cdot;2d_\ell)$. Consequently, Parseval theorem implies that $\sum_{v\in\Z} \E[W_{j,k}(\ell)W_{j,k+v}(\ell)]\E[W_{j,k}(m)W_{j,k+v}(m)]$ converges to $I_0^{(j)}(2d_\ell,2d_m)=\int_{-\pi}^\pi D_{0,0}^{(j)}(\lambda;(\ell,\ell))D_{0,0}^{(j)}(\lambda;(m,m)) d\lambda$. The approximation given in Proposition~\ref{prop:cov_croisee} yields $D_{0,0}^{(j)}(\lambda;(\ell,\ell))\leq \Omega_{\ell,\ell}2^{j2d_\ell}\tilde D_{0,0}(\lambda;2d_\ell)+ CL\pi2^{(2d_\ell-\beta)j}.$ 
Then, using Minkowski inequality
$$\frac{1}{n_j 2^{2j(d_\ell+d_m)}} {V_2}\leq 2 (\Omega_{\ell,\ell}^2 \tilde I_0 (2d_\ell)+C^2L^2\pi^22^{-2\beta j})^{1/2}(\Omega_{m,m}^2 \tilde I_0 (2d_m)+C^2L^2\pi^22^{-2\beta j})^{1/2}.$$
 where $\tilde I_0(\delta)=\int_{-\pi}^{\pi} \tilde D_{0,0}(\lambda;\delta)^2d\lambda$.
It follows that $\frac{1}{n_j 2^{2j(d_\ell+d_m)}} {V_1}$ is bounded by a constant independent of $j$ and depending only on $\bd$, $\bOmega$, $\beta$, $\phi(\cdot)$ and $\psi(\cdot)$.

Similar arguments apply to $V_2$. 
Therefore $\frac{1}{n_j 2^{2j(d_\ell+d_m)}} {V_2}$ is bounded by $\frac{\int_{-\pi}^\pi \tilde D_{0,0}(\lambda;d_\ell+d_m)^2 d\lambda}{2^{2j(d_\ell+d_m)}}.$ By Proposition~\ref{prop:cov_croisee}, $\frac{1}{n_j 2^{2j(d_\ell+d_m)}} {V_2}$ is bounded by a constant depending only on $\bd$, $\bOmega$, $\beta$, $\phi(\cdot)$ and $\psi(\cdot)$.

\subsection*{Bound $V_3$}
The quantity $V_3$ is equal to
$$(1+\mu_\infty)\sum_{k} \sum_{\substack{t\in\Z\\ t'\in \{t+2^j(k-k'), k'\in\Z\}}} \sum_{a,b,c,d=1,\dots p}\!  |B_{\ell,a}(j,2^j k-t)B_{m,b}(j,2^j k-t)B_{\ell,c}(j,2^j k-t')B_{m,d}(j,2^j k-t')|.
$$
Applying Minkowski inequality on $V_3$,
\begin{align*}
V_3\leq& (1+\mu_{\infty}) \sum_{k} \sum_{t\in\Z} \sum_{a,b=1,\dots p} |B_{\ell,a}(j,2^j k-t)B_{m,b}(j,2^j k-t)| \\  
& ~~~~~~~~~~~\left(\sum_{t'\in\{t+2^jk', k'\in\Z\}} \sum_{c,d=1,\dots p}B_{\ell,c}(j,2^j k-t')^2\right)^{1/2} \left(\sum_{t'\in\{t-2^jk', k'\in\Z\}} \sum_{c,d=1,\dots p}B_{m,d}(j,2^j k-t')^2\right)^{1/2}.
\end{align*}
Hence,
\begin{align*}
V_3\leq &(1+\mu_{\infty}) \sum_{k} \left(\sum_{t\in\Z} \sum_{a,b=1,\dots p}B_{\ell,a}(j,2^j k-t)^2\right)^{1/2} \left(\sum_{t\in\Z} \sum_{a,b=1,\dots p}B_{m,b}(j,2^j k-t)^2\right)^{1/2} \\ &  ~~~~~~~~~~~\left(\sum_{t'\in\Z} \sum_{c,d=1,\dots p}B_{\ell,c}(j,2^j k-t')^2\right)^{1/2} \left(\sum_{t'\in\Z} \sum_{c,d=1,\dots p}B_{m,d}(j,2^j k-t')^2\right)^{1/2}.
\end{align*}
Together with~\eqref{eqn:ordre2} the following inequality is obtained, $ V_3\leq (1+\mu_{\infty}) n_j p^2 \theta_{\ell,\ell}(j)\theta_{m,m}(j).$
To conclude $$\frac{1}{n_j 2^{2j(d_\ell+d_m)}} {V_1}\leq(1+ \mu_\infty) p^2 \frac{\theta_{\ell,\ell}(j)\theta_{m,m}(j)}{2^{2j(d_\ell+d_m)}}.$$
Condition (C) is proved since  $\frac{\theta_{\ell,\ell}(j)\theta_{m,m}(j)}{2^{2j(d_\ell+d_m)}}$ tends to a constant independent of $j$ thanks to Proposition~\ref{prop:cov_ondelettes}. 

\section{Preliminary results}

\label{sec:S}

Let us take $\ell$ and $m$ in $1,\dots,p$, and define, for any sequence $\mu=\{\mu_{j},\,j\geq 0\}$, 
\begin{equation}
\label{eqn:Smu}
S_{\ell,m}(\mu)=\sum_{j,k} \mu_{j} \left(\frac{W_{j,k}(\ell)W_{j,k}(m)}{ 2^{j(d_\ell^0+d_m^0)}}-G_{\ell,m}^0\right)=\sum_{j=j_0}^{j_1} \mu_{j} \left(\frac{I_{\ell,m}(j)}{2^{j(d_\ell^0+d_m^0)}}-n_j G_{\ell,m}^0\right).
\end{equation}

\begin{prop}
\label{prop:Smu}

Assume that the sequences $\mu$ belong to the set $\{ \{\mu_j\}_{j \geq 0}, |\mu_j|\leq\frac{1}{n_j}\}$. Under condition (C), $\sup_{\{\mu, |\mu_j|\leq \frac{1}{n_j}\}}S_{\ell,m}(\mu)$ is uniformly bounded by $2^{-j_0\beta}+N^{-1/2} 2^{j_1/2}$ up to a multiplicative constant, that is, 
  $$\sup_{\mu\in\{ (\mu_j)_{j \geq 0},\, |\mu_j|\leq\frac{1}{n_j}\}} \{ S_{\ell,m}(\mu) \} = \BigO_\P (2^{-j_0\beta}+N^{-1/2} 2^{j_1/2}).$$
\end{prop}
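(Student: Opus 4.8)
The plan is to first remove the supremum by a direct optimisation argument, and then to bound the resulting single random variable by a bias--variance decomposition. Since the constraint set $\{|\mu_j|\le 1/n_j\}$ is a symmetric box and $\mu\mapsto S_{\ell,m}(\mu)$ is linear in $\mu$, the supremum is attained by aligning each $\mu_j$ with the sign of the corresponding coefficient, so that
\begin{equation*}
\sup_{\{\mu,\,|\mu_j|\le 1/n_j\}} S_{\ell,m}(\mu)=\sum_{j=j_0}^{j_1}\frac{1}{n_j}\left|\frac{I_{\ell,m}(j)}{2^{j(d_\ell^0+d_m^0)}}-n_j G_{\ell,m}^0\right|.
\end{equation*}
In particular no empirical-process machinery is needed: the right-hand side is a single random variable, and it suffices to bound it in probability.

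Next I would centre each summand, writing
\begin{equation*}
\frac{I_{\ell,m}(j)}{2^{j(d_\ell^0+d_m^0)}}-n_j G_{\ell,m}^0
=\frac{I_{\ell,m}(j)-\E[I_{\ell,m}(j)]}{2^{j(d_\ell^0+d_m^0)}}
+\left(\frac{\E[I_{\ell,m}(j)]}{2^{j(d_\ell^0+d_m^0)}}-n_j G_{\ell,m}^0\right),
\end{equation*}
and treat the deterministic bias term and the centred fluctuation term separately. Using $\E[I_{\ell,m}(j)]=n_j\,\theta_{\ell,m}(j)$ together with Proposition~\ref{prop:cov_ondelettes} and the identity $G_{\ell,m}^0=\Omega_{\ell,m}K(d_\ell^0+d_m^0)\cos(\pi(d_\ell^0-d_m^0)/2)$, the bias term obeys
\begin{equation*}
\left|\frac{\E[I_{\ell,m}(j)]}{2^{j(d_\ell^0+d_m^0)}}-n_j G_{\ell,m}^0\right|\le n_j\,C L\,2^{-\beta j}.
\end{equation*}
Multiplying by $1/n_j$ and summing gives a geometric series $\sum_{j\ge j_0}2^{-\beta j}=O(2^{-\beta j_0})$, which produces the first term in the announced rate.

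For the fluctuation term I would pass from $L^1$ to $L^2$ via $\E|Z-\E Z|\le(\Var Z)^{1/2}$ and invoke Condition~(C), which yields $\Var(I_{\ell,m}(j))\le C\,n_j\,2^{2j(d_\ell^0+d_m^0)}$. Hence
\begin{equation*}
\E\left[\frac{1}{n_j}\frac{|I_{\ell,m}(j)-\E[I_{\ell,m}(j)]|}{2^{j(d_\ell^0+d_m^0)}}\right]\le\frac{1}{n_j}\sqrt{C\,n_j}=\sqrt{\frac{C}{n_j}}.
\end{equation*}
Since $n_j\asymp 2^{-j}N$ by~\eqref{eqn:nj}, one has $n_j^{-1/2}\asymp 2^{j/2}N^{-1/2}$, and the sum over $j_0\le j\le j_1$ is dominated by the coarsest scale $j_1$, so that the expectation is $O(N^{-1/2}2^{j_1/2})$; Markov's inequality then upgrades this to $\BigO_\P(N^{-1/2}2^{j_1/2})$. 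Adding the two contributions gives the claimed bound.

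The only genuine analytic input is the variance control supplied by Condition~(C); everything else is geometric summation, so the argument is essentially bookkeeping once that estimate is in hand. The point worth stressing is that the stochastic term is controlled only in $L^1$ (through the variance), which is precisely why the conclusion is an $\BigO_\P$ statement rather than an almost-sure bound, and that the dominant contribution to the fluctuation arises at the coarsest resolution $j_1$ (smallest $n_j$), explaining why the rate features $2^{j_1/2}$ and not $2^{j_0/2}$.
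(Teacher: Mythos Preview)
Your proof is correct and follows essentially the same route as the paper: the same bias--fluctuation decomposition $S_{\ell,m}=S^{(1)}_{\ell,m}+S^{(0)}_{\ell,m}$, the same use of Proposition~\ref{prop:cov_ondelettes} for the deterministic part and of Condition~(C) for the centred part, leading to the same geometric sums. The only cosmetic difference is that you first evaluate the supremum explicitly via linearity (obtaining the sum of absolute values) and then decompose, whereas the paper decomposes first and then bounds each piece uniformly using $|\mu_j|\le 1/n_j$; the two orderings are equivalent here.
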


\begin{proof}
$S_{\ell,m}(\mu)$ is decomposed in two terms $S^{(0)}_{\ell,m}(\mu)$ and $S^{(1)}_{\ell,m}(\mu)$,
\begin{align*}
S^{(0)}_{\ell,m}(\mu)&= \sum_{j=j_0}^{j_1} \mu_j \frac{1}{2^{j(d_\ell^0+d_m^0)}}\sum_k\left({W_{j,k}(\ell)W_{j,k}(m)}-\theta_{\ell,m}(j)\right),\\\
S^{(1)}_{\ell,m}(\mu)&=\sum_{j=j_0}^{j_1} n_j \mu_j\left[\frac{\theta_{\ell,m}(j)}{2^{j(d_\ell^0+d_m^0)}}-G_{\ell,m}^0 \right].
\end{align*} 
From Proposition~\ref{prop:cov_ondelettes},
\begin{align}
|S^{(0)}_{\ell,m}(\mu)|&\leq  \sum_{j=j_0}^{j_1} |\mu_j| \left|\sum_k{W_{j,k}(\ell)W_{j,k}(m)}-n_j\theta_{\ell,m}(j)\right|, \label{eqn:S0}\\
|S^{(1)}_{\ell,m}(\mu)|&\leq  C \sum_{j=j_0}^{j_1} 2^{-\beta j}n_j |\mu_j|. \label{eqn:S1}
\end{align} 
Under the assumption $|\mu_j|\leq \frac{1}{n_j}$, we have the inequality $|S^{(1)}_{\ell,m}(\mu)|\leq  C \sum_{j=j_0}^{j_1} 2^{-\beta j}$. The right-hand bound is equivalent to $2^{-j_0\beta}$ up to a constant.
Condition (C) gives
$$\E\left[\sup_{\{\mu, |\mu_j|\leq \frac{1}{n_j}\}} \left|S_{\ell,m}^{(0)}(\mu)\right|\right]\leq C'\sum_{j=j_0}^{j_1}  n_j^{-1/2},$$ with a positive constant $C'$.
As $n_j= N 2^{-j}(1+o(1))$ the right-hand side of the inequality is equivalent to $C'N^{-1/2} 2^{j_1/2}.$ 
\end{proof}

\begin{prop}
\label{prop:Smu2}
Let $0<j_0 \leq j_1 \leq j_N$.
Assume that the sequences $\mu$ belong to the set 
$$\mathcal{S}(q,\gamma,c)=\{ \{\mu_j\}_{j \geq 0}, |\mu_j|\leq\frac c n |j-j_0+1|^q 2^{(j-j_0)\gamma} \forall j=j_0,\ldots j_1\}$$ with $0\leq\gamma<1$. 
Under condition (C), $\sup_{\mu\in\mathcal{S}(q,\gamma,c)}S_{\ell,m}(\mu)$ is uniformly bounded by $2^{-j_0\beta}+H(N^{-1/2} 2^{j_0/2})$ up to a constant,
  $$\sup_{\mu\in\mathcal{S}(q,\gamma,c)}\{ S_{\ell,m}(\mu) \} =\BigO_\P (2^{-j_0\beta}+H_\gamma(N^{-1/2} 2^{j_0/2}))$$
  with $H_\gamma(u)=\begin{cases} u & \text{~if~~} 0\leq\gamma<1/2,\\
   \log(1+u^{-2})^{q+1}\,u & \text{~if~~} \gamma=1/2,\\
   \log(1+u^{-2})^q\, u^{2(1-\gamma)} & \text{~if~~} 1/2<\gamma<1.
  \end{cases}$
\end{prop}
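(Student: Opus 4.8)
The plan is to reuse the decomposition $S_{\ell,m}(\mu)=S^{(0)}_{\ell,m}(\mu)+S^{(1)}_{\ell,m}(\mu)$ introduced in the proof of Proposition~\ref{prop:Smu}, where $S^{(1)}$ collects the deterministic bias $n_j(\theta_{\ell,m}(j)2^{-j(d_\ell^0+d_m^0)}-G^0_{\ell,m})$ and $S^{(0)}$ is the centred stochastic part. The only change is the admissible range of the weights, so the whole argument reduces to redoing bounds \eqref{eqn:S0}--\eqref{eqn:S1} with $|\mu_j|\le \frac cn(j-j_0+1)^q2^{(j-j_0)\gamma}$, keeping careful track of the dependence on $j_1-j_0$, and using $\sup_\mu S_{\ell,m}\le\sup_\mu S^{(0)}_{\ell,m}+\sup_\mu S^{(1)}_{\ell,m}$.

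For the bias term, Proposition~\ref{prop:cov_ondelettes} gives $|S^{(1)}_{\ell,m}(\mu)|\le C\sum_{j=j_0}^{j_1}2^{-\beta j}n_j|\mu_j|$. The crucial observation here is that $n_j/n\asymp 2^{-(j-j_0)}$ (the finest scale $j_0$ carries the bulk of the $n=\sum_j n_j$ coefficients), so that $n_j|\mu_j|\lesssim 2^{-(j-j_0)}(j-j_0+1)^q2^{(j-j_0)\gamma}$ and, after factoring out $2^{-\beta j_0}$, one is left with the series $\sum_{i\ge0}(i+1)^q2^{i(\gamma-1-\beta)}$. Since $\gamma<1$ the ratio exponent $\gamma-1-\beta<-\beta<0$ is always negative, the series converges, and $\sup_\mu|S^{(1)}_{\ell,m}(\mu)|=\BigO(2^{-\beta j_0})$ uniformly in $j_1$. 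This produces the first summand of the announced rate.

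For the stochastic term I would first note that $S^{(0)}_{\ell,m}(\mu)=\sum_j\mu_j Y_j$ is linear in $\mu$, with $Y_j=2^{-j(d_\ell^0+d_m^0)}\sum_k(W_{j,k}(\ell)W_{j,k}(m)-\theta_{\ell,m}(j))$ centred and $\Var(Y_j)=2^{-2j(d_\ell^0+d_m^0)}\Var(I_{\ell,m}(j))$, and that $\mathcal S(q,\gamma,c)$ is a box; hence the supremum is explicit, $\sup_\mu S^{(0)}_{\ell,m}(\mu)=\frac cn\sum_{j=j_0}^{j_1}(j-j_0+1)^q2^{(j-j_0)\gamma}|Y_j|$. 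Condition (C) bounds $\Var(I_{\ell,m}(j))\le C\,n_j2^{2j(d_\ell^0+d_m^0)}$, so $\E|Y_j|\le\Var(Y_j)^{1/2}\le(Cn_j)^{1/2}$; combined with $n_j^{1/2}/n\asymp N^{-1/2}2^{j_0/2}2^{-(j-j_0)/2}$ this yields
\[
\E\Big[\sup_{\mu\in\mathcal S(q,\gamma,c)}S^{(0)}_{\ell,m}(\mu)\Big]\;\lesssim\;u\sum_{i=0}^{j_1-j_0}(i+1)^q2^{i(\gamma-1/2)},\qquad u:=N^{-1/2}2^{j_0/2}.
\]
The three regimes of $H_\gamma$ are precisely the three behaviours of this series: for $\gamma<1/2$ it converges, giving $\BigO(u)$; for $\gamma=1/2$ it is $\asymp(j_1-j_0+1)^{q+1}$; for $1/2<\gamma<1$ it is dominated by its last term, $\asymp(j_1-j_0+1)^q2^{(j_1-j_0)(\gamma-1/2)}$. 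A final Markov step then turns this expectation bound into the claimed $\BigO_\P$ statement.

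The step I expect to be the real work — and the reason the hypothesis $j_1\le j_N$ is imposed — is converting these $(j_1-j_0)$-dependent quantities into functions of $u$. Using $j_N\asymp\log_2 N$ one has $j_1-j_0+1\lesssim\log_2 N-j_0\asymp\log(1+u^{-2})$ (since $u^{-2}=N2^{-j_0}$) and $2^{j_1-j_0}\lesssim N2^{-j_0}=u^{-2}$; substituting these into the two nontrivial regimes gives $\log(1+u^{-2})^{q+1}u$ when $\gamma=1/2$ and $\log(1+u^{-2})^q\,u^{2(1-\gamma)}$ when $1/2<\gamma<1$, matching $H_\gamma$ exactly. The delicate point is to check that the implied constants remain uniform in $j_0$ and $j_1$ and that the logarithmic factors are genuinely generated only in the critical and supercritical regimes.
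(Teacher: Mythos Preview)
Your proposal is correct and follows essentially the same route as the paper's proof: the same $S^{(0)}+S^{(1)}$ decomposition, the same use of $n_j/n\asymp 2^{-(j-j_0)}$ to reduce both pieces to the geometric-type sums $\sum_i(i+1)^q2^{i(\gamma-1-\beta)}$ and $\sum_i(i+1)^q2^{i(\gamma-1/2)}$, and the same conversion $j_1-j_0\le\log_2(N2^{-j_0})$, $2^{j_1-j_0}\le N2^{-j_0}=u^{-2}$ to recover $H_\gamma$. Your account is in fact slightly more explicit than the paper's on two points (the box-supremum identity for $S^{(0)}$ and the Markov step), but there is no substantive difference.
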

In particular, for any $0\leq\gamma<1$, under the assumption $2^{-j_0\beta}+N^{-1/2} 2^{j_0/2}\to 0$, we have $\sup_{\mu\in\mathcal{S}(q,\gamma,c)}\{ S_{\ell,m}(\mu) \} =o_\P(1)$
\begin{proof}

Under the assumptions of the proposition, one deduce from inequality~\eqref{eqn:S1} that,  
$$\sup_{\mu\in\mathcal S(q,\gamma,c)}|S^{(1)}_{\ell,m}(\mu)|\leq  cC \frac 1 n \sum_{j=j_0}^{j_1} n_j 2^{(-\beta j +\gamma(j-j_0))}(j-j_0+1)^q\leq cC 2^{-\beta j_0} \sum_{i=0}^{j_1-j_0} 2^{-(1+\beta -\gamma)i}(i+1)^q.$$ 
The sum on the right-hand side of the inequality tends to 0 because $1+\beta-\gamma>0$.

Similarly, under the additional Condition (C), inequality~\eqref{eqn:S0} is rewritten as,
\begin{align*}
\E\left[\sup_{\mu\in\mathcal S(q,\gamma,c)} \left|S_{\ell,m}^{(0)}(\mu))\right|\right]&\leq cC'\frac{1}{n}\sum_{j=j_0}^{j_1} n_j^{1/2} 2^{\gamma (j-j_0)}(j-j_0+1)^q\\ 
&\leq cC' N^{-1/2}2^{j_0/2}\sum_{i=0}^{j_1-j_0} 2^{-(1/2-\gamma)i}(i+1)^q.
\end{align*}
We distinguish three cases depending on the values of $\gamma$.
\begin{itemize}
\item The result is straightforward when $0\leq\gamma<1/2$.
\item If $\gamma=1/2$, the right-hand side is bounded by $cC' N^{-1/2}2^{j_0/2}(j_1-j_0+1)^{q+1}$. The parameter $j_1$ satisfies $2^{j_1}\leq N$. Consequently $j_1-j_0\leq log_2(N)+log_2(2^{-j_0})=log_2(N2^{-j_0})$ and the result is proved.
\item When $1/2<\gamma<1$, the right-hand side admits the upper bound $cC'' N^{-1/2}2^{j_0/2}(j_1-j_0+1)^{q}2^{(\gamma-1/2)(j_1-j_0)}$ with $C''$ a positive constant. Since $2^{j_1}\leq N$, it is inferior to $cC'' (j_1-j_0+1)^{q}(N^{-1}2^{j_0})^{(1-\gamma)}$ which concludes the proof.
\end{itemize}
\vspace{-2\baselineskip}
\end{proof}

\section{Weak consistency}
\label{proof:consistance}

We first establish the convergence under the condition $2^{-j_0\beta}+N^{-1/2} 2^{j_1/2}\to 0$. This assumption is more restrictive than the condition $2^{-j_0\beta}+N^{-1/2} 2^{j_0/2}\to 0$ given in Theorem~\ref{prop:convergence}. Both conditions are equivalent when $j_1-j_0$ is finite but not in a general case.

We then prove Theorem~\ref{prop:convergence} in two steps: first we establish a lower bound for $\hat\bd$ and second we develop a proof similar to the first one that has been given in section~\ref{sec:conv1} but with a weaker assumption thanks to the previous bound.

\subsection{Consistency under non-optimal assumptions}
\label{sec:conv1}

In this section, we give a first result of consistency, with an assumption on $j_0$ and $j_1$ that can be weakened. This result is not necessary to obtain Theorem~\ref{prop:convergence} but the scheme of the proof is similar and it points out why an additional step is necessary.

\begin{prop}
\label{prop:conv1}
Assume that (W1)-(W5) and Condition (C) hold. If in addition $j_0$ and $j_1$ are chosen such that $2^{-j_0\beta}+N^{-1/2} 2^{j_1/2}\to 0$, then $$\hat \bd -\bd^0=o_\P(1),$$
$$\hat \bG(\hat \bd)-\bG(\bd^0)=o_\P(1).$$
\end{prop}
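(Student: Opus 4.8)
The plan is to treat $\hat\bd$ as an M-estimator minimizing the profiled contrast $R$ of~\eqref{eqn:R} over the compact parameter set fixed by (W5), and to show that the random contrast $R(\bd)-R(\bd^0)$ is, uniformly in $\bd$, close to a deterministic contrast that is uniquely minimized at $\bd^0$ with a quantitative separation. Writing $w_j=n_j/n$, $\bar{j}_n=\frac1n\sum_{j=j_0}^{j_1}jn_j$ and $x=\bd-\bd^0$, the profiled matrix~\eqref{eqn:G} reads $\hat G_{\ell,m}(\bd)=\sum_{j}w_j2^{-j(x_\ell+x_m)}\hat g_{\ell,m}(j)$ with $\hat g_{\ell,m}(j)=2^{-j(d_\ell^0+d_m^0)}I_{\ell,m}(j)/n_j$; its deterministic proxy is $\mathcal G_{\ell,m}(\bd)=G_{\ell,m}^0\,\kappa_n(x_\ell+x_m)$, where $\kappa_n(s)=\sum_jw_j2^{-js}$. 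I note in passing that taking the constant sequence $\mu_j\equiv1/n$ (which satisfies $|\mu_j|\le1/n_j$) in Proposition~\ref{prop:Smu} already yields $\hat\bG(\bd^0)-\bG^0=o_\P(1)$, i.e. convergence of $\hat\bG$ at the truth.

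For the deterministic separation I would factor $\kappa_n(x_\ell+x_m)=v_\ell v_m\,\rho_n(x_\ell+x_m)$ with $v_\ell=2^{-x_\ell\bar{j}_n}$ and $\rho_n(s)=\sum_jw_j2^{-s(j-\bar{j}_n)}$, so that $\mathcal G(\bd)=\diag(v)\,(\bG^0\circ\bU)\,\diag(v)$ with $U_{\ell,m}=\rho_n(x_\ell+x_m)=\sum_jw_j2^{-x_\ell(j-\bar{j}_n)}2^{-x_m(j-\bar{j}_n)}$ a Gram matrix. Substituting $\det\mathcal G(\bd)=2^{-2\bar{j}_n\sum_\ell x_\ell}\det(\bG^0\circ\bU)$ into~\eqref{eqn:R}, the term linear in $\bd$ cancels exactly, leaving the deterministic contrast $\log\det(\bG^0\circ\bU)-\log\det\bG^0$. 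Since $\bG^0\succ0$ and $\bU\succeq0$ with $U_{\ell,\ell}=\rho_n(2x_\ell)$, Oppenheim's inequality gives $\det(\bG^0\circ\bU)\ge\det\bG^0\prod_\ell\rho_n(2x_\ell)$, while Jensen's inequality applied to the probability weights $w_j$ yields $\rho_n(2x_\ell)\ge1$ with equality iff $x_\ell=0$ (the scale law being non-degenerate as $j_1>j_0$). Hence the contrast is $\ge\sum_\ell\log\rho_n(2x_\ell)\ge0$, strictly positive off $\bd^0$, and a second-order expansion shows it is bounded below by $c\,\mathrm{Var}_w(J)\|x\|^2$ with $\mathrm{Var}_w(J)$ bounded away from $0$, giving the uniform separation $\inf_{\|\bd-\bd^0\|\ge\varepsilon}(\bar R-\bar R(\bd^0))\ge\delta_\varepsilon>0$.

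The uniform stochastic step controls $\sup_{\bd}|\log\det\hat\bG(\bd)-\log\det\mathcal G(\bd)|$. Setting $\bD(\bd)=\diag\big((\mathcal G_{\ell,\ell}(\bd))^{1/2}\big)$ and $\bC(\bd)=\bD(\bd)^{-1}\mathcal G(\bd)\bD(\bd)^{-1}$ (unit-diagonal, $\succ0$ by the Schur product theorem, uniformly well-conditioned on the compact set), one has $\log\det\hat\bG-\log\det\mathcal G=\log\det\big(\bI_p+\bC^{-1}\bD^{-1}(\hat\bG-\mathcal G)\bD^{-1}\big)$, and the $(\ell,m)$ entry of $\bD^{-1}(\hat\bG-\mathcal G)\bD^{-1}$ equals exactly $S_{\ell,m}(\nu)$ of~\eqref{eqn:Smu} with $\nu_j=(G_{\ell,\ell}^0G_{m,m}^0\rho_n(2x_\ell)\rho_n(2x_m))^{-1/2}\,\tfrac1n2^{-(x_\ell+x_m)(j-\bar{j}_n)}$. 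The decisive observation is that the Jensen gaps in the denominator tame these weights: keeping the single $j$-th summand in each $\rho_n$ gives $\rho_n(2x_\ell)\rho_n(2x_m)\ge\big(w_j2^{-(x_\ell+x_m)(j-\bar{j}_n)}\big)^2$, whence $|\nu_j|\le C/n_j$ uniformly in $\bd$ on the parameter set. Proposition~\ref{prop:Smu} then bounds $\sup_\bd\|\bD^{-1}(\hat\bG-\mathcal G)\bD^{-1}\|$ by $\BigO_\P(2^{-j_0\beta}+N^{-1/2}2^{j_1/2})=o_\P(1)$, so $\sup_\bd|R(\bd)-\bar R(\bd)|=o_\P(1)$; combined with the separation, $\inf_{\|\bd-\bd^0\|\ge\varepsilon}(R(\bd)-R(\bd^0))\ge\delta_\varepsilon-o_\P(1)>0$ with high probability, and $R(\hat\bd)\le R(\bd^0)$ forces $\hat\bd-\bd^0=o_\P(1)$.

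Finally, for $\hat\bG(\hat\bd)-\bG^0$ I would write $\hat G_{\ell,m}(\hat\bd)-G_{\ell,m}^0=G_{\ell,m}^0\big(\kappa_n(\hat x_\ell+\hat x_m)-1\big)+S_{\ell,m}(\mu)$; the noise term is handled as above, and the remaining signal-distortion term vanishes once $\kappa_n(\hat x_\ell+\hat x_m)\to1$. Here lies the main obstacle: $\kappa_n(s)-1=\sum_jw_j(2^{-js}-1)$ is only controlled by $|s|\,\bar{j}_n\,2^{|s|j_1}$, so mere consistency $\hat x=o_\P(1)$ does not suffice — one needs $\hat x$ to shrink fast relative to the coarsest scale $j_1$. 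The route is to upgrade consistency to a rate by inserting the $\BigO_\P(2^{-j_0\beta}+N^{-1/2}2^{j_1/2})$ fluctuation bound into the quadratic minorant, obtaining $\|\hat\bd-\bd^0\|=\BigO_\P\big((2^{-j_0\beta}+N^{-1/2}2^{j_1/2})^{1/2}\big)$, and then to exploit the assumption $N^{-1/2}2^{j_1/2}\to0$ (which constrains $j_1$) to show $\hat x\,j_1\,2^{|\hat x|j_1}=o_\P(1)$. Reconciling these two bounds, i.e. verifying that the $j_1$-dependent distortion is genuinely negligible under this single assumption (rather than under a logarithmically strengthened one as in Theorem~\ref{prop:vitesse}), is the delicate accounting that the proof must carry out.
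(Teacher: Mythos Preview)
Your argument for $\hat\bd-\bd^0=o_\P(1)$ is correct and closely parallels the paper's, with one structural variation. The paper applies Oppenheim's inequality directly to the \emph{random} matrix $\bA=\bE(\bd-\bd^0)\circ\bB$ (your $\bB$, with $B_{\ell,\ell}=\rho_n(2x_\ell)$, coincides with the paper's), obtaining the one-sided bound $R(\bd)-R(\bd^0)\geq L(\bd-\bd^0)+\Delta(\bd,\bd^0)$ with $L(x)=\sum_\ell\log\rho_n(2x_\ell)$ deterministic (exactly your lower bound via Oppenheim on $\bG^0\circ\bU$) and $\Delta=\log\det\bE-\log\det\hat\bG(\bd^0)$; it then shows $\bE(\bd-\bd^0)\to\bG^0$ uniformly in $\bd$ by the same device you use, namely that the relevant weights satisfy $|\mu_j|\le 1/n_j$ so that Proposition~\ref{prop:Smu} applies. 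You instead apply Oppenheim to the \emph{deterministic} limit $\bG^0\circ\bU$ and control $\hat\bG-\mathcal G$ after normalising by $\bD$; your key bound $|\nu_j|\le C/n_j$ via $\rho_n(2x_\ell)\rho_n(2x_m)\ge w_j^2\,2^{-2(x_\ell+x_m)(j-\bar j_n)}$ is correct, and the uniform conditioning of $\bC(\bd)$ follows from $\det\bC\ge\det\bG^0/\prod_\ell G^0_{\ell,\ell}$ (Oppenheim again) together with the Cauchy--Schwarz bound $|C_{\ell,m}|\le|G^0_{\ell,m}|/(G^0_{\ell,\ell}G^0_{m,m})^{1/2}$. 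The two routes are equivalent in substance; yours is a textbook uniform-convergence M-estimator argument, the paper's is a one-sided inequality that sidesteps the need to invert $\bC$.

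On $\hat\bG(\hat\bd)\to\bG^0$ you have correctly located the obstruction. Writing $\hat G_{\ell,m}(\hat\bd)=2^{-\bar j_n(\hat x_\ell+\hat x_m)}\rho_n(\hat x_\ell+\hat x_m)\,E_{\ell,m}(\hat\bd-\bd^0)$, the factor $E_{\ell,m}\to G^0_{\ell,m}$ is immediate from the uniform control, and $\rho_n(\hat x_\ell+\hat x_m)\to1$ follows from $L(\hat x)\to0$ together with Cauchy--Schwarz; but the residual factor $2^{-\bar j_n(\hat x_\ell+\hat x_m)}$ requires $\bar j_n\hat x=o_\P(1)$, i.e.\ a rate $\hat x=o_\P(1/j_0)$, which bare consistency does not deliver. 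The paper's own proof of Proposition~\ref{prop:conv1} does not close this either: it announces that ``the equivalent properties for $\bOmega$ will be detailed in a second step'' and then stops after establishing consistency of $\hat\bd$. The full argument for $\hat\bG(\hat\bd)$ appears only under the strengthened hypothesis $\log(N)^2(2^{-j_0\beta}+N^{-1/2}2^{j_0/2})\to0$ of Theorem~\ref{prop:vitesse} (see~\eqref{eqn:convGbar} and the subsection on convergence of $\hat\bG(\hat\bd)$), which is precisely the logarithmic reinforcement you anticipated. Your route via a sub-optimal rate is the right idea and mirrors Proposition~\ref{prop:vitesse_sous_opt}; but you are also right that under the bare hypothesis of Proposition~\ref{prop:conv1} the accounting does not close: for instance $j_1=j_0+1$ with $2^{j_0}=N/\log N$ satisfies $2^{-j_0\beta}+N^{-1/2}2^{j_1/2}\to0$, yet $j_0\cdot(N^{-1/2}2^{j_1/2})^{1/2}\asymp(\log N)^{3/4}\to\infty$. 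So the second claim, as stated, is left without a self-contained proof in both your write-up and the paper's.
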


\begin{proof}

In order to evaluate the performances of the estimator of the long memory parameters, the first step consists in proving that the proposed estimator for $\bd$ is consistent. The equivalent properties for $\bOmega$ will be detailed in a second step.   
The proof is based on the following inequality, 
\begin{equation}
R(\bd)-R(\bd^0)\geq L(\bd-\bd^0)+\Delta(\bd,\bd^0),
\label{eqn:ineqR}
\end{equation}
where $L$ is a deterministic and convex function of $\bd$ and the remaining term $\Delta$ tends uniformly to zero in probability. 

We first establish inequality~\eqref{eqn:ineqR}. The difference between the criterion evaluated at $\bd$ and at the true long-memory parameters is equal to
$$
R(\bd)-R(\bd^0)= \log\det(\hat \bG(\bd))-\log\det(\hat \bG(\bd^0))+ 2\log(2)\mj\left(\sum_\ell d_\ell-d_\ell^0\right)$$
where $\mj=\frac{1}{n}\sum_{j=j_0}^{j_1} j\,n_j$ and $n=\sum_{j=j_0}^{j_1} n_j$.

The equality can be rewritten as
\begin{align*}
\lefteqn{R(\bd)-R(\bd^0)}\\
= & \log\det\left(\frac{1}{n} \sum_{j=j_0}^{j_1} \bLambda_{\mj}(\bd-\bd^0)\bLambda_j(\bd)^{-1} I(j) \bLambda_j(\bd)^{-1}\bLambda_{\mj}(\bd-\bd^0)\right)-\log\det(\hat \bG(\bd^0))\\
=&  \log\det\left(\frac{1}{n} \sum_{j=j_0}^{j_1} \bLambda_{j-\mj}(\bd-\bd^0)^{-1} \bLambda_j(\bd^0)^{-1} I(j) \bLambda_j(\bd^0)^{-1} \bLambda_{j-\mj}(\bd-\bd^0)^{-1}\right)\\
&  -\log\det(\hat \bG(\bd^0)).
\end{align*} Define $\lambda_j(\delta)=2^{-(j-\mj)\delta}$ for any $j\geq 0$ and $\delta\in\R$.

Let us first recall Oppenheim's inequality (see {\it e.g.} page 480 of \cite{MatrixBook}).
\begin{prop}[Oppenheim's inequality]
\label{prop:oppenheim}
Let $\bE$ and $\bB$ be two semi-definite positive matrices. Then $\det(\bE\circ \bB)\geq \det(\bE)\prod_\ell B_{\ell,\ell}$.
\end{prop}
Let $\bA$ be the following matrix, $$\bA=\frac{1}{n}\sum_{j=j_0}^{j_1} \bLambda_{j-\mj}(\bd-\bd^0)^{-1} \bLambda_j(\bd^0)^{-1} I(j) \bLambda_j(\bd^0)^{-1}\bLambda_{j-\mj}(\bd-\bd^0)^{-1}.$$ 
Oppenheim's inequality will be applied to matrices $\bB$ and $\bE(\bd-\bd^0)$ where the $(\ell,m)$-th element of $\bB$ is defined by $B_{\ell,m}=\frac{1}{n}\sum_{j=j_0}^{j_1} n_j \lambda_j(d_\ell-d_\ell^0)\lambda_j(d_m-d_m^0)$ and $\bE(\bd-\bd^0)=\bA\circ \tilde \bB$ where $\tilde B_{\ell,m}=B_{\ell,m}^{-1}$. The relation $\bA=\bE(\bd-\bd^0)\circ \bB$ holds. The $(\ell,m)$-th element of $\bE(\bd-\bd^0)$ is equal to $$E_{\ell,m}(\bd)=\sum_{j=j_0}^{j_1} \mu_{j,\ell,m}(\bd-\bd^0) I_{\ell,m}(j) 2^{-j(d_\ell^0+d_m^0)}$$ $$\text{~with~} \mu_{j,\ell,m}(\bdelta)=\frac{2^{-j(\delta_\ell+\delta_m)}2^{\mj(\delta_\ell+\delta_m)}}{\sum_{a=j_0}^{j_1} n_a 2^{-a(\delta_\ell+\delta_m)}2^{\mj(\delta_\ell+\delta_m)}}=\frac{2^{-j(\delta_\ell+\delta_m)}}{\sum_{a=j_0}^{j_1} n_a 2^{-a(\delta_\ell+\delta_m)}}.$$
\begin{itemize}
\item The matrix $\bE$ can be expressed as $\bE=\sum_{j,k}\tilde W_{j,k} \tilde W_{j,k}$. Consequently $\bE$ is positive semi-definite being the sum of positive semi-definite matrices. 
\item The matrix $\bB$ satisfies $\bB=\sum_{j=j_0}^{j_1} \bM_j \bM_j$ with $\bM_j=\left(\frac{n_j}{n}\right)^{1/2} \bLambda_{j-\mj}(\bd-\bd^0)^{-1}$. Thus $\bB$ is also positive semi-definite.
\end{itemize}

Oppenheim's inequality implies $\log\det(\bA)\geq \log\det(\bE(\bd-\bd^0))+\sum_\ell \log B_{\ell,\ell}$.

Define $L(\bd-\bd^0):=\sum_{\ell=1}^p \log B_{\ell,\ell}$. As we have $$\sum_{j=j_0}^{j_1} n_j \lambda_j(\delta)\lambda_j(\delta)=\sum_j n_j 2^{-2j\delta}2^{2\mj\delta}= 2^{2\mj\delta}\sum_{j=j_0}^{j_1} n_j 2^{-2 j\delta},$$
the function $L$ satisfies the following equality,
$$L(\bd-\bd^0)=\sum_{\ell=1}^p\left[\log(2^{2\mj(d_\ell-d_\ell^0)})+\log(\frac{1}{n}\sum_{j=j_0}^{j_1} n_j 2^{-2j(d_\ell-d_\ell^0)})\right].$$ It is easily seen that each term of the sum corresponds to the criterion defined in Proposition~6 of \cite{Moulines08Whittle}.

Inequality~\eqref{eqn:ineqR} follows with $\Delta(\bd,\bd^0)=\log\det(\bE(\bd-\bd^0))-\log\det(\hat \bG(\bd^0)).$ We will now control the two terms in the right-hand side inequality~\eqref{eqn:ineqR}. 

\begin{description}
\item[Control of $L$.] $L(\bd-\bd^0)$ is a multivariate extension of the criterion studied in Proposition~6 of \cite{Moulines08Whittle}. It is convex, positive and minimal at $\bd=\bd^0$.
\item[Control of $\Delta$.] We shall prove that both $\log\det \bE(\bd-\bd^0)$ and $\log\det(\hat \bG(\bd^0))$ tend uniformly to $\log\det(\bG^0)$ for $\bd\in\R^p$.

\begin{itemize}
\item The $(\ell,m)$-th element of the matrix $\bE(\bd-\bd^0)$ is equal to
 $$E_{\ell,m}(\bd-\bd^0)=\sum_{j=j_0}^{j_1} \mu_{j,\ell,m}(\bd-\bd^0)I_{\ell,m}(j) 2^{-j(d_\ell^0+d_m^0)}\text{~where~} \mu_{j,\ell,m}(\bdelta)=\frac{2^{-j(\delta_\ell+\delta_m)}}{\sum_a n_a 2^{-a(\delta_\ell+\delta_m)}}.$$
As $\sum_{j=j_0}^{j_1} n_j \mu_{j,\ell,m}(\bdelta)=1$, the quantity $E_{\ell,m}(\bd-\bd^0)$ is written as
$$E_{\ell,m}(\bd-\bd^0)=G_{\ell,m}^0+ \sum_{j,k} \mu_{j,\ell,m}(\bd-\bd^0) \left(\frac{W_{j,k}(\ell)W_{j,k}(m)}{2^{j(d_\ell^0+d_m^0)}}-G^0_{\ell,m}\right)$$
where $G_{\ell,m}^0 = \Omega_{\ell,m}K(d_\ell+d_m)\cos({\pi(d_\ell^0-d_m^0)/2})$.

Above expression is equal to $E_{\ell,m}(\bd-\bd^0)=G_{\ell,m}^0+S_{\ell,m}(\mu_{\ell,m}(\bd-\bd^0))$ with $S_{\ell,m}(\mu)$ defined previously in equation~\eqref{eqn:Smu}. Since $\sup_{\bd}|\mu_j(\bd-\bd^0)|\leq \frac{1}{n_j}$, Proposition~\ref{prop:Smu} states that $E_{\ell,m}(\bd-\bd^0) \to G_{\ell,m}^0$ uniformly in $\bd$ when $2^{-j_0\beta}+N^{-1/2}2^{j_1/2}\to 0$.

\item Finally we shall establish that $\log\det \hat \bG(\bd^0)$ tends to $\log\det(\bG^0)$. Recall $$\hat G_{\ell,m}(\bd^0)=G^0_{\ell,m}+S_{\ell,m}(\nu) \text{~where~} \nu_j=\frac{1}{n}.$$ The sequence $\nu$ belongs to the set $\mathcal S(0,0,1)$. Applying Proposition~\ref{prop:Smu2}, the convergence is proved when $2^{-j_0\beta}+N^{-1/2} 2^{j_0/2}\to 0.$
\end{itemize}
\end{description}
\vspace{-2\baselineskip}
\end{proof}

The consistency has been established in Proposition~\ref{prop:conv1} under the condition $2^{-j_0\beta}+N^{-1/2} 2^{j_1/2}\to 0$. The objective is to weaken this condition in order to prove Theorem~\ref{prop:convergence}. The scheme of the proof is a generalization of the proof of Proposition~9 of \cite{Moulines08Whittle} to multivariate cases.

The only step in the proof of Proposition~\ref{prop:conv1} that needs the assumption  $2^{-j_0\beta}+N^{-1/2} 2^{j_1/2}\to 0$ is the convergence study of $E_{\ell,m}(\bd-\bd^0)$ to $G_{\ell,m}^0$. The proof of Theorem~\ref{prop:convergence} consists in proving that $\hat \bd>\bd^0-1/2$ in probability in order to obtain a weaker convergence assumption for $E_{\ell,m}(\bd-\bd^0)$ applying Proposition~\ref{prop:Smu2}.

\subsection{Lower bound of the estimate}
 \label{sec:bound}
 
We proceed to show first that there exists $\bd^{min}$ such that for all $\ell=1,\ldots,p$, we have $d_\ell^0-1/2<d_\ell^{min}<d_\ell^0$ and $\P\left(\inf_{j_1\geq j_0+2} \hat d_\ell \leq d^{min}_\ell\right)$ tends to $0$ when $N$ goes to infinity. The proof is recursive. 
 
{\sc Step 1.}

We introduce $\tilde\alpha$ defined by 
$$\tilde\alpha_{j,\ell,m}=\begin{cases} \frac{1}{n} 2^{-(j-\mj)(d_\ell-d_\ell^0+d_m-d_m^0)}&\text{~if~~} j_0<j\leq \mj,\\  
\frac{1}{n} 2^{-(j-\mj)(d_\ell^{min}-d_\ell^0+d_m^{min}-d_m^0)} &\text{~if~~} \mj<j\leq j_1,\\
0 & \text{~otherwise} 
\end{cases}$$ where the vector $\bd^{min}$ is taken such that \begin{equation}\label{eqn:dmin}
\forall\ell=1,\ldots,p,~d_{\ell}^0-1/2<d_\ell^{min}<d_{\ell}^0 \text{~and~} \liminf_{n\to\infty}\inf_{\{\bd,~ d_{\ell}\leq d_\ell^{min}\}}\inf_{j_1=j_0,\dots,j_N}\sum_{j=j_0}^{j_1} n_j\tilde \alpha_{j,\ell,\ell}>1.
\end{equation}
Let $\ell$ be a given index in $\{1,\dots,p\}$. Following similar arguments as in \cite{Moulines08Whittle} when showing their formula (59), one can find $d_\ell^{min}$ satisfying condition~\eqref{eqn:dmin}.

The quantity $R(\bd)-R(\bd^0)$ is equal to
\begin{align*}
R(\bd)-R(\bd^0)&=\log\det \bA(\alpha(\bd-\bd^0))-\log\det \hat \bG(\bd^0)\\
\text{with~~} 
A_{\ell,m}(\alpha)&=\sum_{j=j_0}^{j_1} \alpha_{j,\ell,m}I_{\ell,m}(j)\text{~and~}\alpha_{j,\ell,m}(\delta)=\frac{1}{n}2^{-(j-\mj)(\delta_\ell+\delta_m)} .
\end{align*}
We first want to establish that for all $\bd$ in $\{\bd,~\forall \ell~ d_{\ell}\leq d_{\ell}^{min}\}$ we have $R(\bd)-R(\bd^0)\geq\log\det \bA(\tilde\alpha)-\log\det \hat \bG(\bd^0)$. To this aim we will use a generalization of Corollary 7.7.4 of \cite{MatrixBook}.
\begin{prop} \label{prop:det}
Let $\bA$ and $\bB$ be two positive semi-definite matrices of $\C^{p\times p}$. Suppose $\bA-\bB$ is positive semi-definite. Then $\det\bA\geq \det\bB$.
\end{prop}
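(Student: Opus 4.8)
The plan is to reduce the claim to the monotonicity of eigenvalues under the Loewner ordering. Writing $\mathbf{C}=\bA-\bB$, which is positive semi-definite by hypothesis, I have $\bA=\bB+\mathbf{C}$. Ordering the eigenvalues of a Hermitian matrix decreasingly, $\lambda_1(\cdot)\geq\cdots\geq\lambda_p(\cdot)$, Weyl's monotonicity theorem guarantees that adding a positive semi-definite matrix cannot decrease any eigenvalue, so that $\lambda_k(\bA)=\lambda_k(\bB+\mathbf{C})\geq\lambda_k(\bB)$ for every $k=1,\dots,p$. Because $\bA$ and $\bB$ are themselves positive semi-definite, all of these eigenvalues are nonnegative.

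From there I would simply observe that for nonnegative reals the inequalities $a_k\geq b_k\geq0$ imply $\prod_k a_k\geq\prod_k b_k$. Applying this with $a_k=\lambda_k(\bA)$ and $b_k=\lambda_k(\bB)$ gives $\det\bA=\prod_{k=1}^p\lambda_k(\bA)\geq\prod_{k=1}^p\lambda_k(\bB)=\det\bB$, which is exactly the assertion.

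An alternative route, which makes transparent why this is a \emph{generalization} of the strictly positive definite statement (Corollary 7.7.4 of \cite{MatrixBook}), is a perturbation argument: for $\epsilon>0$ set $\bA_\epsilon=\bA+\epsilon\bI$ and $\bB_\epsilon=\bB+\epsilon\bI$, both positive definite, and note that $\bA_\epsilon-\bB_\epsilon=\bA-\bB$ stays positive semi-definite. The positive definite case then yields $\det\bA_\epsilon\geq\det\bB_\epsilon$, and letting $\epsilon\to0^+$ gives $\det\bA\geq\det\bB$ by continuity of the determinant.

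The only genuinely delicate point is the passage from the strictly definite setting to the semi-definite one, that is, the case where $\bB$ or $\bA$ is singular. In the eigenvalue argument this causes no difficulty, since a vanishing eigenvalue is still nonnegative and leaves the product inequality intact; in the perturbation argument it is absorbed cleanly by the limit $\epsilon\to0$. Everything else is routine, so I do not expect a substantial obstacle here.
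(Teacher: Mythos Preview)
Your proposal is correct. Both the Weyl monotonicity argument and the $\epsilon$-perturbation argument go through without difficulty.

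The paper proceeds differently. It splits into cases according to whether $\det\bA=0$. If $\det\bA=0$, then (since $\bA$ is positive semi-definite) there is a unit vector $x$ with $x^\ast\bA x=0$; positive semi-definiteness of $\bA-\bB$ gives $x^\ast\bB x\leq0$, hence $x^\ast\bB x=0$, hence $\bB$ is singular and $\det\bB=0=\det\bA$. If $\det\bA>0$, then either $\det\bB=0$ (trivial) or both matrices are positive definite and Corollary~7.7.4 of \cite{MatrixBook} applies directly. Compared with your approaches: your eigenvalue route is cleaner and self-contained, bypassing the cited corollary entirely; your perturbation route and the paper's argument both ultimately lean on the positive definite case, but you absorb the singular boundary by continuity whereas the paper handles it by an explicit null-space observation. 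Either of your arguments would be a perfectly acceptable replacement.
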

\begin{proof}
We distinguish two cases:
\begin{itemize}
\vspace{-\baselineskip}
 \item Suppose $\det \bA=0$. Then there exists a unitary vector $x\in\R^p$ such that $x^T\bA x=0$. Using the fact that $\bA-\bB$ is positive semi-definite, we have $x^T(\bA-\bB)x=-x^T\bB x\geq 0$ which implies $x^T\bB x= 0$ since $\bB$ is positive semi-definite. Thus $\det \bB=0=\det \bA$.
\item  Suppose $\det \bA>0$. If $\det \bB=0$, inequality $\det \bB\leq \det \bA$ holds. If $\det \bB>0$, since $\bA-\bB$ is positive semi-definite we apply Corollary 7.7.4 of \cite{MatrixBook} which concludes the proof.\\ [-3\baselineskip]
\end{itemize}
\end{proof}
Moreover to prove the positive semi-definiteness of the matrices we will use the Schur product theorem (see {\it e.g.} page 458 of \cite{MatrixBook}).
\begin{prop}[Schur product theorem] \label{prop:schur}
Let $\bB_1$ and $\bB_2$ be two positive semi-definite matrices of $\C^{p\times p}$, then $\bB_1\circ \bB_2$ is also positive semi-definite.
\end{prop}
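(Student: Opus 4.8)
The plan is to exploit the spectral decomposition of a positive semi-definite matrix in order to reduce the statement to the rank-one case, where the Hadamard product can be written down explicitly. Since $\bB_1$ and $\bB_2$ are Hermitian positive semi-definite, I would write their spectral decompositions $\bB_1=\sum_{i=1}^p \lambda_i\, u_i u_i^\ast$ and $\bB_2=\sum_{j=1}^p \mu_j\, w_j w_j^\ast$, where the eigenvalues $\lambda_i,\mu_j$ are all non-negative and $u_i,w_j\in\C^p$. By bilinearity of the Hadamard product this yields
\[
\bB_1\circ\bB_2=\sum_{i=1}^p\sum_{j=1}^p \lambda_i\mu_j\,(u_i u_i^\ast)\circ(w_j w_j^\ast),
\]
so that it suffices to understand the Hadamard product of two rank-one Hermitian matrices.

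The key step is the elementary identity $(u u^\ast)\circ(w w^\ast)=(u\circ w)(u\circ w)^\ast$, valid for any $u,w\in\C^p$, which I would verify entrywise: the $(a,b)$ entry of the left-hand side is $u_a\bar u_b\, w_a\bar w_b=(u_a w_a)\overline{(u_b w_b)}$, and this is exactly the $(a,b)$ entry of $(u\circ w)(u\circ w)^\ast$. Consequently each factor $(u_i u_i^\ast)\circ(w_j w_j^\ast)$ is again a rank-one matrix of the form $z z^\ast$ with $z=u_i\circ w_j$, and such a matrix is manifestly positive semi-definite because $x^\ast(z z^\ast)x=|z^\ast x|^2\geq 0$ for every $x\in\C^p$.

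To conclude, each summand $\lambda_i\mu_j\,(u_i\circ w_j)(u_i\circ w_j)^\ast$ is a non-negative scalar times a positive semi-definite matrix, hence positive semi-definite, and $\bB_1\circ\bB_2$ is a finite sum of such matrices, so it is positive semi-definite as well. The only genuine content lies in the rank-one identity; everything else is bilinearity of $\circ$ together with the fact that a sum of positive semi-definite matrices is positive semi-definite. I do not expect any real obstacle, since the result is classical; the single point to handle carefully is that the matrices are complex Hermitian with $^\ast$ denoting the conjugate transpose, so the entrywise computation must keep track of the complex conjugates correctly.
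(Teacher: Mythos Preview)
Your proof is correct and is precisely the standard spectral-decomposition argument for the Schur product theorem. The paper does not actually supply its own proof of this proposition; it merely states the result and cites page~458 of Horn and Johnson's \emph{Matrix Analysis}, where essentially the same argument you wrote appears.
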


Let $j\geq 0$. The matrix $\bB_1(j)=(2^{j(d_\ell^0+d_m^0)}I_{\ell,m}(j))_{\ell,m}$ is positive semi-definite since it can be written as $\tilde W(j)^T\tilde W(j)$. The matrix $\bB_2=(\alpha_{j,\ell,m}(\bd-\bd^0)-\tilde\alpha_{j,\ell,m})_{\ell,m}$ has positive terms for all $\bd\in\{\bd, \forall \ell~ d_{\ell}\leq d_{\ell}^{min}\}$ and it is thus positive semi-definite.  Applying Proposition~\ref{prop:schur}, we obtain that $\bB(j)=\bB_1(j)\circ \bB_2(j)$ is positive semi-definite. $\bA(\alpha(\bd-\bd^0))-\bA(\tilde\alpha)$ is then positive semi-definite being the sum of positive semi-definite matrices. Similarly, it is easy to check that both $\bA(\tilde\alpha)$ and $\bA(\alpha(\bd-\bd^0))$ are positive semi-definite. 
Consequently Proposition~\ref{prop:det} gives $\log\det \bA(\alpha(\bd-\bd^0))\geq\log\det \bA(\tilde\alpha)$. This result holds for all $\bd$ satisfying $\forall \ell,\, d_{\ell}\leq d_{\ell}^{min}$.

We now study the behaviour of $\log\det \bA(\tilde\alpha)-\log\det \hat \bG(\bd^0)$. First, we have proved previously (see end of section~\ref{sec:conv1}) that $\log\det \hat \bG(\bd^0)$ tends uniformly in $\bd$ to $\log\det\bG^0$. Second, we decompose $\bA(\tilde\alpha)$ in $\bA(\tilde \alpha)=\tilde \bG +\bS(\tilde\alpha)$ with the elements of $\bS(\tilde\alpha)$ defined in equation~\eqref{eqn:Smu} and $\tilde \bG_{\ell,m}=\sum_j n_j\tilde \alpha_{j,\ell,m}G^0_{\ell,m}.$ We distinguish the study of the two terms.
\begin{itemize}
\item As $\mj\sim j_0$, for sufficiently large $N$ there exists a positive constant $c$ such that $$|\tilde \alpha_{j,\ell,m}|\leq \frac{c}{n}{2^{(j-j_0)(d_\ell^0-d_\ell^{min}+d_m^0-d_m^{min})}}.$$ Consequently, for all $(\ell,m)$, the sequence $\tilde \alpha_{\ell,m}$ belongs to $\mathcal S(0,\gamma,c)$ with $\gamma=2\sup_a (d_a^0-d_a^{min})$. As the vector $\bd^{min}$ satisfies that for any $\ell=1,\dots,p$, we have $d_\ell^0-1/2<d^{min}_\ell\leq d^0_\ell$, then $0\leq \gamma<1/2$. Applying Proposition~\ref{prop:Smu2} we deduce that $\bS(\tilde\alpha)$ tends to 0 in probability uniformly in $\bd$.

\item As $\bG^0$ is a covariance matrix, it is positive semi-definite. We can apply Oppenheim's inequality (Proposition~\ref{prop:oppenheim}), $\log\det\tilde \bG\geq \log\det(\bG^0)+\sum_{\ell}\log(\sum_{j=j_0}^{j_1} n_j\tilde \alpha_{j,\ell,\ell}).$ As we defined $\bd^{min}$ such that \eqref{eqn:dmin} holds, it follows that 
$\liminf_{N\to\infty}\inf_{\{\bd, \forall\ell~ d_\ell\leq d_\ell^{min}\}}\inf_{j_1=j_0,\dots,j_N} \log\det\bA(\tilde \alpha)-\log\det\bG^0>0.$
\end{itemize}
We thus get \begin{equation}\label{eqn:ineq_dmin}
\lim_{N\to\infty}\;\P\left(\inf_{\{\bd, \forall\ell~ d_\ell\leq d_\ell^{min}\}}\inf_{j_1=j_0,\dots,j_N} \log\det\bA(\tilde \alpha)-\log\det\hat \bG(\bd^0)>0\right)=1.
\end{equation}

Suppose $\hat d\in{\{\bd, \forall \ell~ d_{\ell}\leq d_{\ell}^{min}\}}$. By definition of $\hat \bd$, inequality $\inf_{\{\bd,~ \forall m~d_m\leq d_m^{min}\}} R(d)-R(d^0)\leq 0$ holds, which is in contradiction with result~\eqref{eqn:ineq_dmin}. Finally with a probability tending to $1$ there exists $\ell_1\in\{1,\dots,p\}$ such that $d_{\ell_1}\geq d_{\ell_1}^{min}$.

{\sc Step 2.} Suppose that exist $\ell_1,\ell_2,\ldots \ell_k$ with $k<p$ such that, with a probability tending to 1, $\hat d_{\ell_i}\geq d_{\ell_i}^{min}$ for all $i=1,\dots,k$. We introduce  $\tilde \alpha^{(k)}$ defined by $\tilde \alpha_{j,\ell,m}^{(k)}=\alpha_{j,\ell,m}(\bd-\bd^0)$ if $j_0\leq j\leq\mj$,  
$$\tilde \alpha_{j,\ell,m}^{(k)}=\begin{cases} \alpha_{j,\ell,m}(\bd-\bd^0) &\text{~if~~} \ell,m\in\{\ell_1,\ell_2,\ldots \ell_k\}\\
\alpha_{j,\ell,m}(\bd^{min}-\bd^0) &\text{~if~~} \ell\notin\{\ell_1,\ell_2,\ldots \ell_k\} \text{~and~} m\notin\{\ell_1,\ell_2,\ldots \ell_k\}\\
\frac{1}{n}{2^{-(j-\mj)(d_\ell-d_\ell^{0}+d_m^{min}-d_m^{0})}} &\text{~if~~} \ell\in\{\ell_1,\ell_2,\ldots \ell_k\} \text{~and~} m\notin\{\ell_1,\ell_2,\ldots \ell_k\}\\
\frac{1}{n}{2^{-(j-\mj)(d_\ell^{min}-d_\ell^{0}+d_m-d_m^{0})}} &\text{~if~~} \ell\notin\{\ell_1,\ell_2,\ldots \ell_k\} \text{~and~} m\in\{\ell_1,\ell_2,\ldots \ell_k\}\end{cases}$$
if $\mj<j\leq j_1$ and $\tilde \alpha_{j,\ell,m}^{(k)}=0$ else.
It is straightforward that for such $\tilde\alpha^{(k)}$ the three following points hold,
\begin{enumerate}
\item For all $\bd\in\{\bd,\,\forall\ell\notin\{\ell_1,\ldots,\ell_k\}~ d_\ell\leq d_\ell^{min}\}$, for all $j\geq 0$, for all $(\ell,m)\in\{1,\dots,p\}^2$, we have $\alpha_{j,\ell,m}(\bd-\bd^0)-\tilde\alpha^{(k)}_{j,\ell,m}\geq 0$;
\item For all $(\ell,m)\in\{1,\dots,p\}^2$, the sequence $(\tilde\alpha^{(k)}_{j,\ell,m})_{j\geq 0}$ belongs to $\mathcal S(0,\gamma,c)$ with $0\leq \gamma<1/2$;
\item $\liminf_{n\to\infty}\inf_{\{\bd, \forall\ell\notin\{\ell_1,\ldots,\ell_k\}~ d_\ell\leq d_\ell^{min}\}}\inf_{j_1=j_0,\dots,j_N}\sum_\ell\log(\sum_{j=j_0}^{j_1} n_j \tilde\alpha^{(k)}_{j,\ell_0,\ell_0})>0$.
\end{enumerate}
Analysis similar to {\sc Step 1} shows that there exists $\ell_{k+1}\notin\{\ell_1,\ell_2,\ldots \ell_k\}$ such that, with a probability tending to 1, $d_{\ell_{k+1}}\geq d_{\ell_{k+1}}^{min}$.

Step 1 and Step 2 imply that $\P(\hat\bd\in\{\bd,\,\forall\ell=1,\ldots,p,\, d_{\ell}\geq d_{\ell}^{min}\})\to 1$ when $N\to\infty$.

\subsection{Proof of Theorem~\ref{prop:convergence}}

Suppose first that for sufficiently large $N$ we have $j_1\geq j_0+2$. 

We have established that there exists $\bd^{min}$ such that for all $\ell=1,\ldots,p$, $d_\ell^0-1/2<d_\ell^{min}<d_\ell^0$ and $\P\left(\sup_{j_1\geq j_0+2} \hat d_\ell \leq d^{min}_\ell\right)$ tends to $0$ when $N$ goes to infinity.

Let $\bd$ be a $\R^p$-vector satisfying $d_\ell\geq d_\ell^{min}$ for all $\ell=1,\dots,p$. Recall that 
\begin{equation}\label{eqn:E}
E_{\ell,m}(\bd)=G^0_{\ell,m}+S_{\ell,m}(\mu_{\ell,m}(\bd-\bd^0))\text{~whith~} \mu_{j,\ell,m}(\bdelta)=\frac{2^{-j(\delta_\ell+\delta_m)}}{\sum_{j'\geq j_0} n_{j'} 2^{-j'(\delta_\ell+\delta_m)}}.
\end{equation} 
We bound the sequence $\mu$ as follows:\\
$$
|\mu_{j,\ell,m}(\bd-\bd^0)|\, \leq \,\frac{2^{(j-j_0)(d_\ell^0-d_\ell^{min}+d_m^0-d_m^{min})}}{\sum_{j'=j_0}^{j_1} n_{j'}2^{(j'-j_0)(d_\ell^0-d_\ell+d_m^0-d_m)}}\, \leq \,n_{j_0}^{-1} 2^{(j-j_0)(d_\ell^0-d^{min}_\ell+d_m^0-d^{min}_m)}.
$$
Since $n_{j_0}\sim N2^{-j_0}$ and $n\sim N2^{-j_0}(2-2^{j_1-j_0})$ there exist $N$ sufficiently large such that $n_{j_0}^{-1}\leq 2 n^{-1}$. We introduce $\gamma=\sup_{\ell=1,\dots,p}  d_\ell^0-d^{min}_\ell$. Then for all $(\ell,m)$, $\mu_{\ell,m}(\bd-\bd^0)$ belongs to $\mathcal S(0,2\gamma,2)$. We have $0\leq\gamma<1/2$ with a probability tending to 1. Applying Proposition~\ref{prop:Smu2} we deduce that $E_{\ell,m}(\bd-\bd^0) \to G_{\ell,m}^0$ uniformly in $\bd$ with a probability tending to 1. Following the proof of Proposition~\ref{prop:conv1} with this result, we obtain Theorem~\ref{prop:convergence}.

It remains to consider the case where we do not have $j_1\geq j_0+2$. Since we supposed $j_1-j_0$ is an increasing sequence of $N$, Proposition~\ref{prop:conv1} holds, which concludes the proof. 

\section{Rate of convergence}
 \label{proof:vitesse1}

In order to obtain the optimal rate of convergence, we first prove the convergence with a suboptimal rate. Based on this result, we are able to obtain feasible conditions under which the rates of Theorem~\ref{prop:vitesse} hold.

\subsection{Convergence with a suboptimal rate}

We shall now establish a first rate of convergence which is not optimal but which will be useful to derive conditions for the optimal rate. 

\begin{prop}
\label{prop:vitesse_sous_opt}
Assume that (W1)-(W5) and Condition (C) hold. If in addition $j_0$ is chosen such that $2^{-j_0\beta}+N^{-1/2} 2^{j_0/2}\to 0$ then $$\hat \bd -\bd^0=\BigO_\P(N^{-1/4}2^{j_0/4}+2^{-\beta j_0/2}).$$
\end{prop}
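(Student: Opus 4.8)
The plan is to run the same argument as in the proof of Proposition~\ref{prop:conv1}, but to track rates instead of settling for $o_\P(1)$. Since $\hat\bd$ minimises $R$ we have $R(\hat\bd)-R(\bd^0)\leq 0$, and combining this with the basic inequality~\eqref{eqn:ineqR}, namely $R(\bd)-R(\bd^0)\geq L(\bd-\bd^0)+\Delta(\bd,\bd^0)$, gives
\[
L(\hat\bd-\bd^0)\leq -\Delta(\hat\bd,\bd^0)\leq \sup_{\bd}|\Delta(\bd,\bd^0)|.
\]
The strategy is then: (i) bound $L$ from below by a quadratic in $\bd-\bd^0$, with a curvature constant that is uniform in $N$; and (ii) bound $\Delta$ at $\hat\bd$ from above by the \emph{squared} target rate. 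Taking square roots of the resulting inequality $\|\hat\bd-\bd^0\|^2=\BigO_\P(2^{-j_0\beta}+N^{-1/2}2^{j_0/2})$ and using $(a+b)^{1/2}\leq a^{1/2}+b^{1/2}$ yields exactly $\hat\bd-\bd^0=\BigO_\P(2^{-\beta j_0/2}+N^{-1/4}2^{j_0/4})$.

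For step (ii) I would fix a small $\gamma\in(0,1/4)$. By the weak consistency already established in Theorem~\ref{prop:convergence}, the event $\{\forall\ell,\ d_\ell^0-\hat d_\ell\leq\gamma\}$ has probability tending to one, so it suffices to control $\Delta$ uniformly over $\{\bd:\ \forall\ell,\ d_\ell^0-d_\ell\leq\gamma\}$. On this set the bound on the weights obtained in the proof of Theorem~\ref{prop:convergence} shows that each sequence $\mu_{\ell,m}(\bd-\bd^0)$ belongs to $\mathcal S(0,2\gamma,2)$; since $2\gamma<1/2$, Proposition~\ref{prop:Smu2} applies with $H_\gamma(u)=u$, giving $\sup|S_{\ell,m}(\mu_{\ell,m}(\bd-\bd^0))|=\BigO_\P(2^{-j_0\beta}+N^{-1/2}2^{j_0/2})$ uniformly in $\bd$. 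The same rate holds for $\hat G_{\ell,m}(\bd^0)-G^0_{\ell,m}=S_{\ell,m}(\nu)$ with $\nu\in\mathcal S(0,0,1)$. Writing $\Delta(\bd,\bd^0)=\log\det\bE(\bd-\bd^0)-\log\det\hat\bG(\bd^0)$, both matrices lie within $\BigO_\P(2^{-j_0\beta}+N^{-1/2}2^{j_0/2})$ of the positive definite matrix $\bG^0$ entrywise, so the Lipschitz continuity of $\log\det$ on a neighbourhood of $\bG^0$ transfers this rate to $\sup_\bd|\Delta(\bd,\bd^0)|$, hence to $\Delta(\hat\bd,\bd^0)$ on the high-probability event above.

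For step (i) I would use that $L(\bd-\bd^0)=\sum_{\ell}f(d_\ell-d_\ell^0)$ splits into a sum of copies of the univariate criterion of Proposition~6 of \cite{Moulines08Whittle}, each $f$ being smooth, convex, minimal at $0$, with $f''(0)=4(\log 2)^2\,\mathrm{Var}(j)$, the variance being taken under the weights $n_j/n$ on $\{j_0,\dots,j_1\}$. Because $j_1\geq j_0+2$, these normalised weights are, up to $o(1)$, independent of $j_0$ and have variance bounded below by a strictly positive constant, so there exists $\kappa>0$ independent of $N$ with $L(\bd-\bd^0)\geq\kappa\|\bd-\bd^0\|^2$ on a fixed neighbourhood of $\bd^0$; by consistency $\hat\bd$ lies in this neighbourhood with probability tending to one. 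Combining with step (ii) gives $\kappa\|\hat\bd-\bd^0\|^2\leq L(\hat\bd-\bd^0)=\BigO_\P(2^{-j_0\beta}+N^{-1/2}2^{j_0/2})$, which is the announced rate after taking square roots. The main obstacle is precisely step (i): securing a quadratic minorant for $L$ with curvature uniform in both $N$ and $j_0$, which hinges on the nondegeneracy of the scale distribution guaranteed by the assumption $j_1\geq j_0+2$; the remaining ingredients are merely the rate-refined versions of the uniform convergences already used to prove Proposition~\ref{prop:conv1}.
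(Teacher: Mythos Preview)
Your proposal is correct and follows essentially the same route as the paper: both combine the basic inequality~\eqref{eqn:ineqR} with a uniform quadratic minorant for $L$ (via the univariate curvature bound of Proposition~6 of \cite{Moulines08Whittle}) and a uniform $\BigO_\P(2^{-j_0\beta}+N^{-1/2}2^{j_0/2})$ control of $\Delta$ obtained from Proposition~\ref{prop:Smu2} on the consistency neighbourhood. The only cosmetic difference is that you bound $\Delta$ through Lipschitz continuity of $\log\det$ near $\bG^0$, whereas the paper writes $\Delta$ as $\log\det(I+\bG^{0-1}\bS(\mu))$ and bounds it by the trace; both yield the same rate.
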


\begin{proof}
The proof is based on inequality~\eqref{eqn:ineqR}. The procedure is to find a lower bound for $R(\bd)-R(\bd^0)$ on the set $\{\bd,\, \max_{\ell=1,\dots,p}|d_\ell-d^0_\ell|< 1/4\}$. By Theorem~\ref{prop:convergence}, $\hat\bd-\bd^0$ goes to 0 in probability. Therefore, for sufficiently large $N$, we have $\max_{\ell=1,\dots,p}|\hat d_\ell-d^0_\ell|< 1/4$.

First, a second order Taylor expansion of $L(\bd-\bd^0)$ at the neighbourhood of $0$ for $\bd^0-1/4<\bd\leq\bd^0$ is $L(\bd-\bd^0)=\left.\frac{d^2L(u)}{du^2}\right|_{\bar u} (\bd-\bd^0)^2+o(\max_{\ell=1,\dots,p}|d_\ell-d^0_\ell|^2)$
with $|\bar u|\leq 1/4$. Proposition~6 of \cite{Moulines08Whittle} states that $\liminf_{N\to\infty}\inf_{\bar u\in[-1/2,0]}\inf_{j_1=j_0+1,\dots,j_N} \left.\frac{d^2L(u)}{du^2}\right|_{\bar u}>0$. Thus for $\bd^0-1/4<\bd\leq\bd^0+1/4$, there exists $c>0$ such that $L(\bd-\bd^0)>c\sum_{\ell=1}^p(d_\ell-d_\ell^0)^2+o(\max_{\ell=1,\dots,p}|d_\ell-d^0_\ell|^2)$.

We now want to establish an upper bound for $\Delta(\bd,\bd^0)$.  
This quantity satisfies $$\Delta(\bd,\bd^0)=\log\det(I+\bG^{0-1}\bS(\mu(\bd-\bd^0))))$$  where $\bS(\mu(\bd-\bd^0))$ is the matrix with $(\ell,m)$-th element $S_{\ell,m}(\mu(\bd-\bd^0))$ defined in~\eqref{eqn:Smu} and $\mu_{j,\ell,m}(\cdot)$ defined in~\eqref{eqn:E}. Thus, $\Delta(\bd,\bd^0)=\log(\prod_{i=1}^p\lambda_i)$ where $(\lambda_i)_{i=1,\dots,p}$ denote the eigenvalues of the semi-definite positive matrix $\bG^{0-1}\bS(\mu(\bd-\bd^0))$. Hence $$0\leq\Delta(\bd,\bd^0)\leq trace(\bG^{0-1}\bS(\mu(\bd-\bd^0))).$$
Since $\max_{\ell=1,\dots,p} |d_\ell-d^{0}_\ell|\to0$, for $N$ sufficiently large the quantity $\gamma=\max_{\ell=1,\dots,p} |d_\ell-d^{0}_\ell|$ satisfies $0\leq\gamma<1/4$. As established previously, for all $(\ell,m)$, $\mu_{\ell,m}(\bd-\bd^0)$ belongs to $\mathcal S(0,2\gamma,2)$. Then for all $(\ell,m)$, $S_{\ell,m}(\mu_{\ell,m}(\bd-\bd^0))=\BigO_\P(2^{-j_0\beta}+N^{-1/2} 2^{j_0/2})$ uniformly in $\bd\in[\bd^0-1/4,\bd^0]$ applying Proposition~\ref{prop:Smu2}. Hence $\Delta(\bd,\bd^0)=\BigO_\P(2^{-j_0\beta}+N^{-1/2} 2^{j_0/2}).$  

Inequality~\eqref{eqn:ineqR} thus gives $$R(\bd)-R(\bd^0)\geq c\sum_{\ell=1}^p(d_\ell-d_\ell^0)^2+o(\max_\ell|d_\ell-d_\ell^0|^2) + \BigO_\P(2^{-j_0\beta}+N^{-1/2} 2^{j_0/2})$$ 
with $c>0$. It follows that for all $\ell=1,\dots,p$, $(\hat d_\ell-d_\ell^0)^2=\BigO_\P(2^{-j_0\beta}+N^{-1/2} 2^{j_0/2})$.
\end{proof}

\subsection{Proof of Theorem~\ref{prop:vitesse}}

The criterion $R$ is equal to $R(\bd)=\log\det \left(\bLambda_{\mj}(\bd)\hat \bG(\bd) \bLambda_{\mj}(\bd)\right)-1$. It is straightforward that $\hat \bd = \argmin_{\bd} R(\bd)$ satisfies \begin{eqnarray}
\label{eqn:R2}\hat \bd = \argmin_{\bd} \barR(\bd)  &\text{~with~}& \barR(\bd)=\log\det\barG(\bd)\\ \nonumber &\text{~and~}& \barG(\bd)=\bLambda_{\mj}(\bd-\bd^0)\hat \bG(\bd) \bLambda_{\mj}(\bd-\bd^0)
\end{eqnarray}
The Taylor expansion of $\barR$ at $\hat \bd$ at the neighbourhood of $\bd^0$ gives \begin{equation}\label{eqn:taylor}
R(\hat\bd)-R(\bd^0)= \left.\frac{\partial \barR(\bd)}{\partial \bd}\right|_{\bd^0}(\bd-\bd^0) + (\hat \bd -\bd^0)^T\left. \frac{\partial^2 \barR(\bd)}{\partial \bd \partial \bd^T}\right|_{\bar{\bd}}(\hat \bd -\bd^0)
\end{equation}
 where $\bar{\bd}$ is such that $\|\bar{\bd}-\bd^0\|\leq\|\hat \bd-\bd^0\|$.

The derivatives of the criterion $\barR(\bd)$ are equal to
\begin{align}
\label{eqn:derivR}\frac{\partial \barR(\bd)}{\partial d_a}&= trace\left(\barG(\bd)^{-1} \frac{\partial \barG(\bd)}{\partial d_a} \right)\\
\label{eqn:deriv2R}\frac{\partial^2 \barR(\bd)}{\partial d_a\partial d_b}&= - trace\left(\barG(\bd)^{-1} \frac{\partial \barG(\bd)}{\partial d_b}\barG(\bd)^{-1} \frac{\partial \barG(\bd)}{\partial d_a} \right)+trace\left(\barG(\bd)^{-1} \frac{\partial^2 \barG(\bd)}{\partial d_a \partial d_b} \right)
\end{align}
when $\barG(\bd)^{-1}$ exists.

\subsubsection[Convergence of G and its derivatives]{Study of $\barG$ and its derivatives}

To study the asymptotic behaviour of the derivatives of the criterion it is necessary first to study the asymptotic behaviour of $\barG$ and of its derivatives.

For any $a=1,\ldots,p$, let $\bi_a$ be a $p\times p$ matrix whose a-{\it th} diagonal element is one and all other elements are
zero. Let $a$ and $b$ be two indexes in $1,\ldots,p$. The first derivative of $\barG(\bd)$ with respect to $d_a$, $\frac{\partial \barG(\bd)}{\partial d_a}$, is equal to
$$ - \log(2) \frac{1}{n} \sum_{j=j_0}^{j_1} (j-\mj)\bLambda_{\mj}(\bd-\bd^0){\bLambda_{j}(\bd)}^{-1}(\bi_a \bI(j)+\bI(j)\bi_a){\bLambda_{j}(\bd)}^{-1}\bLambda_{\mj}(\bd-\bd^0).$$
And the second derivative,  with respect to $d_a$ and $d_b$,
\begin{multline}
\label{eqn:deriv2G}
\frac{\partial^2 \barG(\bd)}{\partial d_a\partial d_b}=\log(2)^2 \bLambda_{\mj}(\bd-\bd^0)\frac{1}{n} \sum_{j=j_0}^{j_1} (j-\mj)^2\bLambda_{j}(\bd)^{-1}\\  \left(\bi_b \bi_a \bI(j) + \bI(j)\bi_a \bi_b + \bi_b \bI(j)\bi_a + \bi_a \bI(j)\bi_b\right)\bLambda_{j}(\bd)^{-1}\Lambda_{\mj}(\bd-\bd^0)
\end{multline}

\subsubsection*{Convergence of $\barG(\bd)$}

Let $\ell,m$ be given indexes in $\{1,\ldots, p\}$.
Any $(\ell,m)$-th element of the matrix $\barG(\bd)$ satisfies
$$\bar G_{\ell,m}(\bd)=G^0_{\ell,m}\sum_{j=j_0}^{j_1} n_j \omega^{(0)}_{j,\ell,m}+S_{\ell,m}(\omega^{(0)}_{\ell,m}(\bd-\bd^0))$$
where $\omega^{(0)}_{j,\ell,m}(\bd-\bd^0)=\frac{1}{n} 2^{-(j-\mj)(d_\ell-d_\ell^0+d_m-d_m^0)}.$

Recall that $\mj=(j_0+\eta_{j_1-j_0})(1+o(1))$ with $0\leq\eta_{j_1-j_0}\leq 1$. Let $\bd$ be a $\R^p$-vector such that for all $\ell=1,\dots,p$, $\hat d_\ell\leq d_\ell\leq d^0_\ell$. As $\sup_\ell|\hat d_\ell-d_\ell^0|=o_\P(1)$, for any $\gamma\in(0,1/2)$, there exists $N_\gamma$ such that for any $N \geq N_\gamma$, $2^{-(j-\mj)(d_\ell-d_\ell^0+d_m-d_m^0)}\leq 2^\gamma 2^{(j-j_0)\gamma}$. For $N\geq N_\gamma$, the sequence $\omega^{(0)}_{\ell,m}(\hat \bd-\bd^0)$ belongs to $\mathcal{S}(0,\gamma,2^\gamma)$. Proposition~\ref{prop:Smu2} shows that $S_{\ell,m}(\omega^{(0)}_{\ell,m}(\bd-\bd^0))$ tends to zero when $2^{-j_0\beta}+N^{-1/2} 2^{j_0/2}\to 0$ uniformly in $\bd$.

Finally, we shall prove that $\sum_{j=j_0}^{j_1} n_j \omega^{(0)}_{j,\ell,m}(\bd-\bd^0)\to 1$. Since $|2^{a}-1|\leq 2^{|a|}-1\leq \log(2)|a|2^{|a|}$ for all $a\in\R$,
\begin{align*}
|\sum_{j=j_0}^{j_1} n_j \omega^{(0)}_{j,\ell,m}(\bd-\bd^0)- 1|& \leq \frac 1 n \sum_j n_j  \log(2)|j-\mj|\max_{\ell=1,\dots,p}|d_\ell-d^0_\ell| 2^{|j-\mj|\max_{\ell=1,\dots,p}|d_\ell-d^0_\ell|} \\
& \leq  2\log(N) \max_{\ell=1,\dots,p}|d_\ell-d^0_\ell| 2^{2\log_2(N)\max_{\ell=1,\dots,p}|d_\ell-d^0_\ell|}.
\end{align*}
The last inequality is a consequence of $j_0\leq j_1\leq j_N=\log_2(N)$. 
Under assumption $\log(N)\max_{\ell=1,\dots,p}|\hat d_\ell-d^0_\ell|\to 0$, the right-hand side of the inequality goes to 0 when $N$ goes to infinity.

Consequently,
\begin{equation}\label{eqn:convGbar}
\sup_{\{\bd,\,\|\bd-\bd^0\|\leq\|\hat\bd-\bd^0\|\}} \bar G_{a,b}(\bd)=G^0_{a,b}(1+o_\P(\log(N)\max_\ell|d_\ell-d_\ell^0|))+o_\P(2^{-j_0\beta}+N^{-1/2}2^{j_0/2}).
\end{equation}
Due to Proposition~\ref{prop:vitesse_sous_opt}, it is sufficient that $\log(N)^2(2^{-j_0\beta}+N^{-1/2}2^{j_0/2})\to 0$ to obtain $\sup_{\{\bd,\,\|\bd-\bd^0\|\leq\|\hat\bd-\bd^0\|\}} \bar G_{a,b}(\bd)=G^0_{a,b}+o_\P(1).$
Thus for sufficiently large $N$, $\barG(\bd)$ is invertible and $\barG(\bd)^{-1}$ converges in probability to $\bG^{0-1}$ on the set ${\{\bd,\,\|\bd-\bd^0\|\leq\|\hat\bd-\bd^0\|\}}$.

\subsubsection*{Convergence of $\left. \frac{\partial \barG(d)}{\partial d_a}\right|_{\bd}$}

This section concerns the convergence in probability of $\left(\left.\frac{\partial \barG(\bd)}{\partial d_a}\right|_{\bd}\right)_{a,b}$ which is equal to
\begin{align*}
\left(\left.\frac{\partial \barG(\bd)}{\partial d_a}\right|_{\bd}\right)_{a,b}&=\log(2)\frac 1 n \sum_{j=j_0}^{j_1} (j-\mj) 2^{-j(d_a^0+d_b^0)}I_{a,b}(j)\\
&= \log(2)\left[G^0_{a,b}\sum_{j=j_0}^{j_1} n_j \omega^{(1)}(\bd-\bd^0)+ S_{a,b}(\omega^{(1)}(\bd-\bd^0))\right],
\end{align*}
where $\omega^{(1)}_{j}(\bdelta)= \frac 1 n (j-\mj)2^{-(j-\mj)(\delta_\ell+\delta_m)}$.

We first study the behaviour of $\sum_{j=j_0}^{j_1} n_j \omega^{(1)}(\bd-\bd^0)$.
\begin{align*}
|\sum_{j=j_0}^{j_1} n_j \omega^{(1)}(\bd-\bd^0)|& \leq \frac 1 n \sum_j n_j  \log(2)|j-\mj|\max_{\ell=1,\dots,p}|d_\ell-d^0_\ell| 2^{|j-\mj|\max_{\ell=1,\dots,p}|d_\ell-d^0_\ell|} \\
&\leq \log(N)\max_{\ell=1,\dots,p}|d_\ell-d^0_\ell| 2^{\log_2(N)\max_{\ell=1,\dots,p}|d_\ell-d^0_\ell|}
\end{align*}
It is thus sufficient that $\log(N)\max_{\ell=1,\dots,p}|\hat d_\ell-d^0_\ell|\to 0$ to have $\sum_{j=j_0}^{j_1} n_j \omega^{(1)}(\bd-\bd^0)=o_\P(1)$.

Let $\bd$ be in a neighbourhood of $\bd^0$ such that $\sup_\ell |d_\ell-d^0_\ell|<\gamma$ with $0\leq\gamma<1/2$. As $\mj\sim j_0+\eta_{j_1-j_0}$ with $0\leq \eta_{j_1-j_0}\leq 1$, there exists $N_0$ such that for any $N \geq N_0$ the sequence $\omega^{(1)}(\bd-\bd^0)$ belongs to $\mathcal S(1,\gamma,2^\gamma)$. Thanks to Proposition~\ref{prop:Smu2} it comes that $S_{a,b}(\omega^{(1)}(\bd-\bd^0))=\BigO_\P\left(2^{-j_0 \beta}+N^{-1/2}2^{j_0/2}\right)$ uniformly on the neighbourhood. Consequently, $\left(\left.\frac{\partial \barG(\bd)}{\partial d_a}\right|_{\bd}\right)_{a,b}=o_\P(1)$. Finally considering similarly the other terms, when $\log(N)\max_{\ell=1,\dots,p}|\hat d_\ell-d^0_\ell|\to 0$, the equivalence $\left.\frac{\partial \barG(\bd)}{\partial d_a}\right|_{\bd} = o_\P(1)$ is fulfilled
under assumptions of Theorem~\ref{prop:vitesse}.

In addition, when $\bd=\bd^0$ we have $\sum_{j=j_0}^{j_1} n_j \omega^{(1)}_j(\mathbf{0})=0$. Hence,
$\left(\left.\frac{\partial \barG(\bd)}{\partial d_a}\right|_{\bd^0}\right)_{a,b}= C^{(1)}_{a,b}
$
where $C_{a,b}^{(1)}=\BigO_\P\left(2^{-j_0 \beta}+N^{-1/2}2^{j_0/2}\right)$.
Or more generally,
\begin{equation}\label{eqn:dG}
\left.\frac{\partial \barG(\bd)}{\partial d_a}\right|_{\bd^0} = \bi_a \bC^{(1)}+ \bC^{(1)}\bi_a,
\end{equation}
where each term of the matrix $\bC^{(1)}$ is $\BigO_\P\left(2^{-j_0 \beta}+N^{-1/2}2^{j_0/2}\right)$.

\subsubsection*{Convergence of $\left.\frac{\partial^2 \barG_{a,b}(\bd)}{\partial d_a\partial d_b}\right|_{\bd}$}

Let $\bd$ be a $\R^p$-vector such that $|d_\ell-d_\ell^0|\leq|\hat d_\ell-d_\ell^0|$ for all $\ell=1,\dots,p$. The proof is derived for $\left(\left.\frac{\partial^2 \barG_{a,b}(\bd)}{\partial d_a\partial d_b}\right|_{\bd}\right)_{a,b}$, for $a\neq b$. The argumentation is similar for the diagonal terms. Introducing a sequence $S_{a,b}(\cdot)$, the expression~\eqref{eqn:deriv2G} is rewritten as
\begin{align*}
\left(\left.\frac{\partial^2 \barG_{a,b}(\bd)}{\partial d_a\partial d_b}\right|_{\bd}\right)_{a,b}&=\log(2)^2 \left[2^{\mj( d_a+  d_b-d_a^0-d_b^0)}\frac{1}{n}\sum_{j=j_0}^{j_1} (j-\mj)^2 \frac{I_{a,b}(j)}{2^{j( d_a+ d_b)}}\right]\\
&= \log(2)^2\left[ G_{a,b}^0\sum_{j=j_0}^{j_1} n_j \omega^{(2)}_{j,a,b}(\bd-\bd^0) + S_{a,b}(\omega^{(2)}_{a,b}(\bd-\bd^0))\right]
\end{align*}
where $\omega^{(2)}_{j,a,b}(\bdelta)=\frac{1}{n}(j-\mj)^2 2^{-(j-\mj)(\delta_a+\delta_b)}.$

First, we want to prove that 
\begin{equation}\label{eqn:sum_w}
\sum_{j=j_0}^{j_1} n_j \omega_{j,a,b}^{(2)}(\bd-\bd^0) = \kappa_{j_1-j_0}(1+o_\P(1)).
\end{equation}
Recall $\kappa_{j_1-j_0}=\frac{1}{n} \sum_{j=j_0}^{j_1} (j-\mj)^2 n_j$.
Since $|2^{a}-1|\leq 2^{|a|}-1\leq \log(2)|a|2^{|a|}$ for all $a\in\R$, we have the inequality
$$
|\sum_{j=j_0}^{j_1} n_j \omega_{j,a,b}^{(2)}(\bar\bd-\bd^0) - \kappa_{j_1-j_0}| \leq  2\log(2)  \kappa_{j_1-j_0} \log_2(N) |d-d^0| 2^{2\log_2(N)|d-d^0|}.
$$ 
Equation~\eqref{eqn:sum_w} holds when $\log(N) \max_{\ell=1,\dots,p}|d_\ell-d^0_\ell|\to 0$. Due to Proposition~\ref{prop:vitesse_sous_opt} it is sufficient that $\log(N)^2(2^{-j_0\beta}+N^{-1/2}2^{j_0/2})\to 0$.

Let $\bd$ be in a neighbourhood of $\bd^0$ such that $\sup_\ell |d_\ell-d^0_\ell|<\gamma$ with $0\leq\gamma<1/2$. As $\mj\sim j_0+\eta_{j_1-j_0}$ with $0\leq \eta_{j_1-j_0}\leq 1$, there exists $N_0$ such that for any $N \geq N_0$ the sequence $\omega^{(2)}_{a,b}(\bd-\bd^0)$ belongs to the set $\mathcal S(2,\gamma,2^\gamma)$. Using Proposition~\ref{prop:Smu2}, $S_{a,b}(\omega^{(2)}_{a,b}(\bd-\bd^0))\leq C ( 2^{-j_0 \beta}+N^{-1/2}2^{j_0/2})$ uniformly on the neighbourhood of $\bd$ for $N\geq N_0$. As a consequence, \begin{equation*}\left(\left.\frac{\partial^2 \barG_{a,b}(\bd)}{\partial d_a\partial d_b}\right|_{\bar\bd}\right)_{a,b}= \log(2)^2\kappa_{j_1-j_0} G_{a,b}^0\left(1+o_\P(1)\right) + C^{(2)}_{a,b},
\end{equation*}
with $C^{(2)}_{a,b}=\BigO_\P( 2^{-j_0 \beta}+N^{-1/2}2^{j_0/2}).$

Finally, when $N$ goes to infinity, 
\begin{equation}\label{eqn:d2G}
\sup_{\{\bd,\,\|\bd-\bd^0\|\leq\|\hat\bd-\bd^0\|\}}\left.\frac{\partial^2 \barG_{a,b}(\bd)}{\partial d_a\partial d_b}\right|_{\bar\bd} = \log(2)^2 \kappa_{j_1-j_0} \left(\bi_b \bi_a\bG^0+\bi_b\bG^0\bi_a +\bi_a\bG^0\bi_b+\bG^0\bi_a \bi_b\right)+o_\P(1)
\end{equation}
when $\log(N)\max_{\ell=1,\dots,p}|\hat d_\ell-d^0_\ell|=o_\P(1)$ and $2^{-j_0 \beta}+N^{-1/2}2^{j_0/2}\to 0$.

\subsubsection{Second derivative of the criterion}

Let us detail the expression of the second derivative of $R$ with respect to $\bd$ at $\bar\bd$, $$\left.\frac{\partial^2 \barR(\bd)}{\partial d_a\partial d_b}\right|_{\bar\bd}= - trace\left(\barG(\bar\bd)^{-1} \left.\frac{\partial \barG(\bd)}{\partial d_b}\right|_{\bar\bd}\barG(\bar\bd)^{-1} \left.\frac{\partial \barG(\bd)}{\partial d_a}\right|_{\bar\bd} \right)+trace\left(\barG(\bar\bd)^{-1} \left.\frac{\partial^2 \barG(\bd)}{\partial d_a \partial d_b}\right|_{\bar\bd} \right).$$
Using \eqref{eqn:convGbar} and the previous study that established that $\left.\frac{\partial \barG(\bd)}{\partial d_a}\right|_{\bar\bd} = o_\P(1)$, the first term tends to~0 under assumptions of Theorem~\ref{prop:vitesse}. Combining~\eqref{eqn:convGbar} and~\eqref{eqn:d2G} we can assert that $\left.\frac{\partial^2 \barR(\bd)}{\partial d_a\partial d_b}\right|_{\bar\bd}$ tends in probability to $$\log(2)^2\kappa_{j_1-j_0} trace\left(\bG^{0-1} (\bi_b \bi_a\bG^0+\bi_b\bG^0\bi_a +\bi_a\bG^0\bi_b+\bG^0\bi_a \bi_b) \right).$$
Let $G^{0(-1)}_{\ell,m}$ denotes the $(\ell,m)$-th element of $\bG^{0-1}$.
When $a\neq b$, $$trace\left(\bG^{0-1} (\bi_b \bi_a\bG^0+\bi_b\bG^0\bi_a +\bi_a\bG^0\bi_b+\bG^0\bi_a \bi_b) \right)=G^{0(-1)}_{a,b} G^0_{a,b}+G^{0(-1)}_{b,a} G^0_{b,a}=2 G^{0(-1)}_{a,b} G^0_{a,b}.$$
When $a=b$, $$trace\left(\bG^{0-1} (\bi_b \bi_a\bG^0+\bi_b\bG^0\bi_a +\bi_a\bG^0\bi_b+\bG^0\bi_a \bi_b) \right)= 2(1+ G^{0(-1)}_{a,a} G^0_{a,a}).$$
Finally, 
\begin{equation}\label{eqn:d2R}
\left.\frac{\partial^2 \barR(\bd)}{\partial \bd\partial \bd^T}\right|_{\bar\bd}=  \kappa_{j_1-j_0} 2\log(2)^2\left( \bG^{0-1}\circ \bG^0 + \bI_p\right)+o_\P(1).
\end{equation}
The matrix $\bG^{0-1}\circ \bG^0 $ is positive definite using Schur product theorem (Proposition~\ref{prop:schur}) and hence the matrix $\bG^{0-1}\circ \bG^0 + \bI_p$ is invertible.

\subsubsection{End of the proof}
 \label{proof:vitesse2}

The Taylor expansion~\eqref{eqn:taylor} together with \eqref{eqn:d2R} imply
\begin{align*}
2\log(2)^{2}\kappa_{j_1-j_0}(\hat \bd - \bd^0) &=  -\left(\frac{1}{2\log(2)^2\kappa_{j_1-j_0}}\left.\frac{\partial^2 \barR(\bd)}{\partial \bd\partial \bd^T}\right|_{\bar \bd}\right)^{-1} \left.\frac{\partial \barR(\bd)}{\partial \bd}\right|_{\bd^0}\\
&= (\bG^{0-1}\circ \bG^0+\bI_p)^{-1}\left.\frac{\partial \barR(\bd)}{\partial \bd}\right|_{\bd^0} (1+o_\P(1))
\end{align*}
We now study the convergence of $\left.\frac{\partial \barR(\bd)}{\partial \bd}\right|_{\bd^0}$. Using equations~\eqref{eqn:derivR} and~\eqref{eqn:dG}, we have the equation
\begin{equation*}
\left.\frac{\partial \barR(\bd)}{\partial d_a}\right|_{\bd^0} = 2\sum_{b=1}^p \barG_{a,b}^{(-1)}(\bd^0)C^{(1)}_{a,b}.
\end{equation*}
So the asymptotic behaviour of the first derivative of the criterion is  $$\left.\frac{\partial \barR(\bd)}{\partial d_a}\right|_{\bd^0}=\BigO_\P( 2^{-j_0 \beta}+N^{-1/2}2^{j_0/2}).$$
Plugging this result into the expression above, it comes $$ \kappa_{j_1-j_0}\left(2^{-j_0 \beta}+N^{-1/2}2^{j_0/2}\right)^{-1} (\hat \bd -\bd^0)=\BigO_\P(1).$$
Since $\kappa_{\ell}>0$ for $\ell\geq 1$ and $\kappa_\ell\to 2$ when $\ell\to\infty$, the sequence $\kappa_{j_1-j_0}$ is bounded below by a positive constant. The rate of convergence for $\hat\bd-\bd^0$ in Theorem~\ref{prop:vitesse} follows.

\subsection[Convergence of the long-run covariance matrix]{Convergence of $\hat\bG(\hat\bd)$ and of $\hat\bOmega$}

Recall $\hat\bG_{\ell,m}(\hat\bd)=2^{\mj(\hat d_\ell-d_\ell^0+\hat d_m-d_m^0)}\bar G_{\ell,m}(\hat \bd).$
Equation~\eqref{eqn:convGbar} with the rate obtained for the convergence of $\hat\bd-\bd^0$ state that $\bar G_{\ell,m}(\hat \bd)=G^0_{\ell,m}(1+\BigO_\P(\log(N)(2^{-j_0 \beta}+N^{-1/2}2^{j_0/2}))+\BigO_\P\left(2^{-j_0 \beta}+N^{-1/2}2^{j_0/2}\right)$ under assumptions of Theorem~\ref{prop:vitesse}. The rate of convergence of $\hat G_{\ell,m}(\hat\bd)$ in Theorem~\ref{prop:vitesse} is then derived from the fact that $2^{\mj u}-1= j_0 u\log(2)(1+o(1))$ when $u\to 0$.

The convergence of $\hat\bOmega$ is straightforward, thanks to the fact that $K(\cdot)$ is a continuous function of $\bd$.
To obtain the rate of convergence, we observe first that $\cos(u)=1+o(u^2)$ when $u$ goes to $0$. Second, 
\begin{align*}
K(\hat d_\ell+\hat d_m)-K(d_\ell^0+d_m^0)&=\int_{-\infty}^\infty{(|\lambda|^{-\hat d_\ell-\hat d_m}-|\lambda|^{-d_\ell^0-d_m^0})|\hat\psi(\lambda)|^2d\lambda}\\
& \leq  |\hat d_\ell+\hat d_m-d_\ell^0-d_m^0|\int_{-\infty}^\infty|\log|\lambda|||\lambda|^{-d_\ell^0-d_m^0}|\hat\psi(\lambda)|^2d\lambda.
\end{align*} 
Using assumption (W2) and (W5), $$|K(\hat d_\ell+\hat d_m)-K(d_\ell^0+d_m^0)|\leq |\hat d_\ell+\hat d_m-d_\ell^0-d_m^0|\, C \int_{-\infty}^\infty|\log|\lambda||\,|\lambda|^{-(1+\beta)} d\lambda$$ with C positive constant. The integral on the right-hand side is finite and thus $K(\hat d_\ell+\hat d_m)-K(d_\ell^0+d_m^0)=\BigO_\P(\max_{i=1,\dots,p} |\hat d_i-d_i^0|)$. When $\max_{i=1,\dots,p} |\hat d_i-d_i^0|=\BigO_\P\left(2^{-j_0 \beta}+N^{-1/2}2^{j_0/2}\right)$, we have $1/(K(\hat d_\ell+\hat d_m)cos(\frac{\pi}{2}(\hat d_\ell-\hat d_m)))=(1+\BigO_\P\left(2^{-j_0 \beta}+N^{-1/2}2^{j_0/2}\right))$ which concludes the proof.

\subsection{Additional tools}

These results correspond to Lemma~13 of \cite{Moulines08Whittle}

Define the sequences $\eta_L$ and $\kappa_L$ for any $L\geq 0$ by \begin{align}
\label{eqn:eta}
\eta_L &:= \sum_{i=0}^L i \frac{2^{-i}}{2-2^{-L}}\\
\label{eqn:kappa}
\kappa_L &:= \sum_{i=0}^L (i-\eta_L)^2 \frac{2^{-i}}{2-2^{-L}}
\end{align}
It is straightforward that
\begin{align}
n &\sim N 2^{-j_0}(2-2^{-(j_1-j_0)})\\
<\mathcal J> &\sim j_0 + \eta_{j_1-j_0}\\
\frac{1}{n}\sum_{j=j_0}^{j_1}(j-<\mathcal J>)^2 n_j &\sim \kappa_{j_1-j_0}
\end{align}
For every $L\geq 1$ the quantities $\eta_L$ and $\kappa_L$ are strictly positive. When $L$ goes to infinity, the sequences $\eta_L$ and $\kappa_L$ respectively converge to $1$ and $2$.

And for all $u\geq 0$,  \begin{equation}
\frac{1}{\kappa_L}\sum_{i=0}^{L-u}\frac{2^{-i}}{2-2^{-L}}(i-\eta_L)(i+u-\eta_L) \to 1 \text{~when~} L\to\infty
\end{equation}

\end{appendices}

\setlength{\parskip}{0pt}
\bibliographystyle{spbasic}
\bibliography{biblio}

\end{document}